\newtheorem{theorem}{Theorem}[section]
\newtheorem{lemma}[theorem]{Lemma}
\newtheorem{proposition}[theorem]{Proposition}
\newtheorem{conjecture}[theorem]{Conjecture}
\renewcommand{\leq}{\leqslant}
\renewcommand{\geq}{\geqslant}
\newtheorem{assumptions}[theorem]{Assumptions}
\theoremstyle{definition}
\theoremstyle{definition}
\newtheorem{remark}[theorem]{Remark}
\numberwithin{equation}{section}
\newcommand{\ve}{\varepsilon}
\newcommand{\vp}{\varphi}
\newcommand{\mn}{\sqrt{-1}}
\newcommand{\ov}[1]{\overline{#1}}
\newcommand{\de}{\partial}
\newcommand{\db}{\overline{\partial}}
\newcommand{\ddbar}{\sqrt{-1} \partial \overline{\partial}}
\newcommand{\ti}[1]{\tilde{#1}}
\newcommand{\tr}[2]{\textrm{tr}_{#1} #2}
\DeclareMathOperator{\Ric}{Ric} 
\newcommand{\ZZ} {\mathbb{Z}}
\newcommand{\RR} {\mathbb{R}}
\newcommand{\CC} {\mathbb{C}}
\newcommand {\shP}  {\mathcal{P}}
\newcommand {\shL}  {\mathcal{L}}
\newcommand {\shC}  {\mathcal{C}}
\newcommand {\shK}  {\mathcal{K}}
\newcommand {\nor}  {{\operatorname{nor}}}
\renewcommand{\O}  {\mathcal{O}}
\renewcommand{\Re}  {\operatorname{Re}}
\renewcommand{\Im}  {\operatorname{Im}}
\newcommand{\Int} {\operatorname{int}}
\newcommand{\PP} {\mathbb{P}}
\newcommand {\Diff}  {\operatorname{Diff}}
\newcommand {\Per} {\operatorname{Per}}
\numberwithin{equation}{section} \numberwithin{figure}{section}
\begin{document}
\title{Collapsing of Abelian Fibred Calabi-Yau Manifolds}
\author[M. Gross]{Mark Gross$^{*}$}
\thanks{$^{*}$Supported in part by NSF grants  DMS-0805328 and DMS-1105871.}
\address{Mathematics Department, University of California San Diego, 9500 Gilman Drive \#0112, La Jolla, CA 92093}
\email{mgross@math.ucsd.edu}
\author[V. Tosatti]{Valentino Tosatti$^{\dagger}$}
\thanks{$^{\dagger}$Supported in part by NSF grant
DMS-1005457.}
 \address{Department of Mathematics, Columbia University, 2900 Broadway, New York, NY 10027}
  \email{tosatti@math.columbia.edu}
  \author[Y. Zhang]{Yuguang Zhang$^{\ddagger}$}
  \thanks{$^{\ddagger}$Supported in part  by  NSFC-10901111.}
\address{Mathematics Department, Capital Normal University, Beijing 100048, P.R. China}
\email{yuguangzhang76@yahoo.com}
\begin{abstract}
We study the collapsing behaviour of Ricci--flat K\"ahler metrics on
a projective Calabi-Yau manifold which admits an abelian fibration,
when the volume of the fibers approaches zero. We show that away
from the critical locus of the fibration the metrics collapse with
locally bounded curvature, and along the fibers the rescaled metrics
become flat in the limit. The limit metric on the base minus the
critical locus is locally isometric to an open dense subset of any
Gromov-Hausdorff limit space of the Ricci-flat metrics. We then apply these results to study metric
degenerations of families of polarized hyperk\"ahler manifolds in
the large complex structure limit. In this setting we prove an
analog of a result of Gross-Wilson for $K3$ surfaces, which is
motivated by the Strominger-Yau-Zaslow picture of mirror symmetry.
\end{abstract}

\maketitle
\section{Introduction}\label{sect0}
In this paper, a Calabi-Yau manifold $M$ is a compact K\"ahler manifold
with vanishing first Chern class $c_1(M)=0$ in $H^2(M,\mathbb{R})$.
A fundamental theorem of Yau \cite{Ya1} says that on $M$ there
exists a unique Ricci--flat K\"ahler metric in each K\"ahler class.
If we move the K\"ahler class towards a limit class on the boundary
of the K\"ahler cone, we get a family of Ricci--flat K\"ahler
metrics which degenerates in the limit. The general question of
understanding the geometric behaviour of these metrics was raised by
Yau \cite{yau2, yau3}, Wilson \cite{W} and others, and much work has
been devoted to it, see for example \cite{GW, RoZ, RZ, ST, To, To1}
and references therein. In this paper, we study  metric
degenerations of   Ricci--flat K\"ahler metrics whose K\"ahler
classes approach  semi-ample non-big  classes.

The first useful observation is that  the diameters of a family of
Ricci--flat K\"ahler metrics $\ti{\omega}_{t}$, $t\in (0,1]$,  on a
Calabi-Yau manifold $M$ are uniformly bounded if their K\"ahler
classes $[\ti{\omega}_{t}]$ tend to a limit class $\alpha$ on the
boundary of the K\"ahler cone when $t\rightarrow 0$ \cite{To, ZT}.
Another special feature of the K\"ahler case is that the volume of
the Ricci--flat metrics can be computed cohomologically, and to
determine whether it will approach zero or stay bounded away from
it, it is enough to calculate the self-intersection $\alpha^{n}$ where
$n=\dim_{\mathbb{C}}M$. If $\alpha^{n}$  is strictly positive, then it was proved by the second-named author \cite{To} that the Ricci--flat metrics do not collapse, (i.e., there is a constant $\upsilon >0$ independent of $t$ such that each $\ti{\omega}_{t}$ has a unit radius metric ball with volume bigger than $\upsilon$), and in fact converge smoothly away from a subvariety. If $\alpha^{n}$ is zero, then the total volume of the Ricci--flat metrics approaches zero, so one expects to have collapsing to a lower-dimensional space. This was shown to be the case for elliptically fibered $K3$ surfaces by Gross-Wilson \cite{GW}, and later the second-named author considered the higher dimensional case when the Calabi-Yau manifold $M$ admits a holomorphic fibration to a lower-dimensional K\"ahler space, and
the limit class is the pullback of a K\"ahler class \cite{To1}. The
first goal of the present paper is to improve the convergence result
in \cite{To1}.

Let us now describe our first result in detail. Let $(M,\omega_M)$ be a compact Calabi-Yau $n$-manifold which
admits a holomorphic map $f:M\to Z$ where $(Z,\omega_Z)$ is a
compact K\"ahler manifold. Thanks to Yau's theorem, we can assume
that $\omega_M$ is Ricci--flat. Denote by $N=f(M)$ the image of $f$,
and assume that $N$ is an irreducible normal subvariety of $Z$ with
dimension $m$, $0<m<n$, and that the map $f:M\to N$ has connected
fibers. Define $\omega_0:=f^*\omega_Z$, which is a smooth
nonnegative real $(1,1)$-form on $M$ with cohomology class on
the boundary of the K\"ahler cone, and let
$\omega_N$ be the restriction of $\omega_Z$ to the regular part of $N$.
For example, one can take either $Z=N$ (if $N$ is smooth), or
$Z=\mathbb{CP}^k$ (if $N$ is an algebraic variety). This second case
arises if we have a line bundle $L\to M$ which is semiample
(some power is globally generated) and of Iitaka dimension $m<n$, so $L$
is not big.

In general, given a map $f:M\to N$ as above, there is a proper
analytic subvariety $S\subset M$ such that $N\backslash f(S)$ is
smooth and $f:M\backslash S\to N\backslash f(S)$ is a smooth
submersion (the set $f(S)$ is exactly the image of the subset of $M$ 
where the differential $df$ does
not have full rank $m$). For any $y\in N\backslash f(S)$ the fiber
$M_y=f^{-1}(y)$ is a smooth Calabi-Yau manifold of dimension $n-m$,
and it is equipped with the K\"ahler metric $\omega_M|_{M_y}$. The
volume of the fibers $\int_{M_y}(\omega_M|_{M_y})^{n-m}$ is a
homological constant which we can assume to be $1$.
Consider the K\"ahler metrics on $M$ given by
$\omega_t=\omega_0+t\omega_M$, with $0<t\leq 1$, and call
$\ti{\omega}_t=\omega_t+\ddbar\vp_t$ the unique Ricci--flat K\"ahler
metric on $M$ cohomologous to $\omega_t$, with potentials normalized
by $\sup_M\vp_t=0$. They satisfy a family of complex
Monge-Amp\`ere equations
\begin{equation}\label{ma}
\ti{\omega}_t^n=(\omega_t+\ddbar\vp_t)^n=c_t t^{n-m} \omega_M^n,
\end{equation}
where $c_t$ is a constant that has a positive limit as $t\to 0$ (see
\eqref{normaliz}). A general $C^0$ estimate $\|\vp_t\|_{C^0}\leq C$
(independent of $t>0$) for such equations was proved by
Demailly-Pali \cite{DP} and Eyssidieux-Guedj-Zeriahi \cite{EGZ2},
generalizing work of Ko\l odziej \cite{Ko}. In the case under
consideration, much more is true: the second-named author's work
\cite{To1} shows that there exists a smooth function $\vp$ on
$N\backslash f(S)$ so that as $t$ goes to zero we have $\vp_t\to
\vp\circ f$ in $C^{1,\alpha}_{loc}(M\backslash S,\omega_M)$ for any
$0<\alpha<1$. Moreover $\omega=\omega_N+\ddbar\vp$ is a K\"ahler
metric on $N\backslash f(S)$ with $\Ric(\omega)=\omega_{\rm WP}$.
Here $\omega_{\rm WP}$ is the pullback of the Weil-Petersson metric
from the moduli space of polarized Calabi-Yau fibers, which has
appeared several times before in the literature \cite{Fi,GW, ST,
To1}.

We now assume that one (and hence every) fiber $M_y$ with $y\in N\backslash
f(S)$ is biholomorphic to a complex torus. This is
the case for example whenever $M$ is hyperk\"ahler. We also assume
that $M$ is projective, so we can take $[\omega_M]$ to be the first
Chern class of an ample line bundle. In this case we can improve the
above  result, thus answering Questions 4.1 and 4.2 of \cite{To2} in
our setting: 

\begin{theorem}\label{main1} If $M$ is projective and if one (and hence all) of the
fibers $M_y$ with $y\in N\backslash f(S)$ is a torus, then as $t$ approaches zero the Ricci--flat metrics $\ti{\omega}_t$ converge in $C^\infty_{\mathrm{loc}}(M\backslash S,\omega_M)$ to $f^*\omega$, where $\omega$ is a K\"ahler metric on $N\backslash f(S)$ with $\Ric(\omega)=\omega_{\rm WP}$.
Given any compact set $K\subset M\backslash S$ there is a constant
$C_K$ such that the sectional curvature of $\ti{\omega}_t$ satisfies
\begin{equation}\label{sectbound}
\sup_K |\mathrm{Sec}(\ti{\omega}_t)|\leq C_K,
\end{equation}
for all small $t>0$. Furthermore, on each torus fiber $M_y$ with
$y\in N\backslash f(S)$ we have
\begin{equation}\label{rescaled}
\frac{\ti{\omega}_t|_{M_y}}{t}\to \omega_{SF,y},
\end{equation}
where $\omega_{SF,y}$ is the unique flat metric on $M_y$
cohomologous to $\omega_M|_{M_y}$ and the convergence is smooth and
uniform as $y$ varies on a compact subset of $N\backslash f(S)$.
\end{theorem}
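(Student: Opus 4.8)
The plan is to prove the theorem by comparing the Ricci--flat metrics $\ti{\omega}_t$ with explicit \emph{semi-flat} model metrics built out of the torus structure, and then upgrading the $C^{1,\alpha}$ convergence of \cite{To1} to smooth convergence together with the curvature bound.

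First I would construct the semi-flat approximation. Over $N\backslash f(S)$ the map $f$ is a holomorphic submersion with torus fibers, and since $M$ is projective these are polarized abelian varieties; locally over a ball $U\subset N\backslash f(S)$ one can write $f^{-1}(U)\cong U\times\CC^{n-m}/\Lambda(z)$ with lattice $\Lambda(z)$ (equivalently a period matrix $\tau(z)$ with $\Im\tau>0$) depending holomorphically on the base coordinate $z$. Using the fiberwise flat metric $\omega_{SF,y}$ and the holomorphic variation of $\tau$, one assembles a smooth closed real $(1,1)$-form $\omega_{SF}$ on $M\backslash S$ whose restriction to each fiber is $\omega_{SF,y}$. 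The candidate model metric is then $\omega_t^\#:=f^*\omega+t\,\omega_{SF}$. The first key computation is that, because the fibers are flat, $\omega_t^\#$ solves the Monge--Amp\`ere equation \eqref{ma} up to an error of high order in $t$: expanding $(\omega_t^\#)^n=\binom{n}{m}t^{n-m}(f^*\omega)^m\wedge\omega_{SF}^{n-m}+O(t^{n-m+1})$, and invoking the limiting equation of \cite{To1} for $\omega$ (equivalently $\Ric(\omega)=\omega_{\rm WP}$, which encodes exactly the variation of $\tau$), the leading term matches $c_0\,t^{n-m}\omega_M^n$ up to the same error. This identifies $\omega_t^\#$ as a genuine approximate solution.

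Next I would write $\ti{\omega}_t=\omega_t^\#+\ddbar\psi_t$ and establish uniform $C^\infty_{\mathrm{loc}}$ estimates for $\psi_t$, taking the $C^{1,\alpha}$ estimate of \cite{To1} as the starting point. The difficulty is that $\ti{\omega}_t$ collapses, so its eigenvalues in the fiber directions degenerate like $t$ and the usual Yau/Evans--Krylov--Schauder estimates cannot be applied uniformly. To overcome this I would separate horizontal (base) from vertical (fiber) derivatives: horizontal control follows from the base equation and the convergence $\vp_t\to\vp\circ f$, while for vertical control I would rescale the fiber metric by $1/t$, pass to the universal cover $\CC^{n-m}$ of the fiber, and work on a region of fixed size where the rescaled metric is uniformly close to a flat metric. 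On this non-degenerate rescaled model the linearized Monge--Amp\`ere operator is uniformly elliptic, so standard interior estimates yield bounds that descend to the fiber. Smooth convergence $\ti{\omega}_t\to f^*\omega$ and the rescaled statement \eqref{rescaled} then follow, with uniformity in $y$ coming from the smooth dependence of $\omega_{SF,y}$, hence of $\tau(z)$, on the base point.

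For the sectional curvature bound \eqref{sectbound} I would exploit the smooth closeness of $\ti{\omega}_t$ to the explicit model $\omega_t^\#$ established above: it suffices to bound the curvature of $\omega_t^\#$ on $K$. Since $\omega_t^\#$ is a Riemannian submersion metric with flat fibers scaled by $t$ over a fixed base, its curvature is governed by the bounded curvature of $(N\backslash f(S),\omega)$ and by the O'Neill tensors measuring the variation of the fibers; a direct computation shows these remain bounded on compact sets as $t\to0$, the potentially divergent intrinsic curvature of the shrinking fibers vanishing because the fibers are flat. I expect the main obstacle to be the vertical estimate in the collapsing regime: the shrinking injectivity radius along the fibers obstructs a direct application of elliptic theory, and it is exactly here that the torus hypothesis is indispensable, since it is what allows one to unwrap the fiber to $\CC^{n-m}$ and reduce to a uniformly non-degenerate problem.
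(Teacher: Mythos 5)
Your overall strategy coincides with the paper's: build a semi-flat model from the polarized torus structure, unwrap the fibers to $\CC^{n-m}$, rescale the fiber directions by $1/\sqrt{t}$ so that the complex Monge-Amp\`ere equation becomes uniformly non-degenerate, and run Evans--Krylov/Schauder there; this is exactly Section \ref{sect3} of the paper. However, there is a genuine gap at the first step of your elliptic scheme: the decomposition $\ti{\omega}_t=\omega_t^{\#}+\ddbar\psi_t$ need not exist on $f^{-1}(B)$. A semi-flat form restricting to $\omega_{SF,y}$ on each fiber is only determined up to pullback by fiberwise translations, and for a given choice of $\omega_{SF}$ the difference $\omega_{SF}-\omega_M$ is $d$-exact on $f^{-1}(B)$ but in general \emph{not} $\ddbar$-exact: the obstruction is a Dolbeault class in $H^{0,1}(f^{-1}(B))$, which the paper identifies (via the Leray spectral sequence and averaging over the fiberwise torus action) with a holomorphic section $\sigma$ of the fibration. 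This is precisely the content of Proposition \ref{ddbl}, the analogue of the $\de\db$-lemmas of Gross--Wilson \cite{GW} and Hein \cite{H}: only after composing with the fiberwise translation $T_\sigma$ does one get $T_\sigma^*\omega_{SF}-\omega_M=\ddbar\xi$, and accordingly all estimates in the paper are proved for the translated pullbacks $\lambda_t^*p^*T_{-\sigma}^*\ti{\omega}_t$. Without this correction your potential $\psi_t$ does not exist downstairs; it exists on the contractible cover $B\times\CC^{n-m}$, but there it fails to be periodic, and the uniform $C^0$ bound you need as input for Evans--Krylov (which the paper obtains from the $L^\infty$ estimate of \cite{DP, EGZ2} together with the boundedness of the fixed function $\xi$) is then no longer automatic.

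Two further points. First, your transfer of the curvature bound from $\omega_t^{\#}$ to $\ti{\omega}_t$ via ``smooth closeness'' is invalid as stated: for collapsing metrics, closeness in $C^\infty_{\mathrm{loc}}(M\backslash S,\omega_M)$ does not control the difference of curvatures, because curvature involves the inverse metric, which blows up like $t^{-1}$ in the fiber directions. The paper's argument (Lemma \ref{sectcurv}) avoids any comparison: sectional curvature is invariant under the local biholomorphisms $T_{-\sigma}\circ p\circ\lambda_t$, so it suffices to bound the curvature of the rescaled pulled-back metrics, which follows from their uniform $C^k$ bounds and two-sided non-degeneracy; if you want to keep your comparison argument, the closeness must be measured after the anisotropic rescaling, i.e.\ as in Lemma \ref{conv4}. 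Second, for your rescaled equation to have uniformly smooth right-hand side you need the density of $\omega_M^n$ against the semi-flat volume form to be constant along the fibers; otherwise the right-hand side, evaluated at $(y,z/\sqrt{t})$, oscillates on scale $\sqrt{t}$ and its derivatives blow up. The paper proves this fiberwise constancy using the Ricci-flatness of $\omega_M$, which forces the relevant density to be pluriharmonic, hence constant on each compact fiber; your ``approximate solution'' computation uses this implicitly but it requires proof.
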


As remarked earlier, in the case of elliptically fibered $K3$
surfaces ($n=2, m=1$) this theorem follows from the work of
Gross-Wilson \cite{GW}. In higher dimensions, in the very special
case when $S$ is empty, the theorem (except \eqref{sectbound}) also
follows from the work of Fine \cite{Fi}.

The curvature bound (\ref{sectbound}) in Theorem \ref{main1} does
not hold if the generic fibers are not tori, as one can see for example
by taking the product of two non-flat Calabi-Yau manifolds
with the product Ricci-flat K\"ahler metric and then scaling
one factor to zero. On the other hand, we believe that the
assumption in Theorem \ref{main1} that $M$ is projective is just technical.

We now describe the second main result of the paper, which concerns the Gromov-Hausdorff limit of our manifolds.
The Gromov-Hausdorff distance $d_{GH}$ was introducted by Gromov in the
1980's \cite{G1}, and it defines a topology on the space
of isometry classes of all compact metric spaces. For two compact
metric spaces $X$ and $Y$, the Gromov-Hausdorff distance of $X$ and
$Y$ is
$$d_{GH}(X,Y)=\inf_{Z} \{d_H^Z(X,Y)\ |\ X, Y\hookrightarrow Z
 \text{ isometric embeddings}\},$$ where $Z$ is a metric space and $d_H^Z(X,Y)$
denotes the standard Hausdorff distance between $X$ and $Y$
regarded as subsets in $Z$ by the isometric embeddings (see for example \cite{G1, Ro} for more background). 
We would like to understand the Gromov-Hausdorff
convergence of $(M,\ti{\omega}_t)$  in
Theorem \ref{main1}. Since the volume of the whole manifold goes to zero,
the manifolds $(M,\ti{\omega}_t)$ are collapsing. Furthermore, from Theorem \ref{main1} we know that on a Zariski open set of $M$ the Ricci-flat metrics collapse with locally bounded curvature.
The collapsing of   Riemannian
manifolds  in the Gromov-Hausdorff
sense has been extensively studied from different viewpoints, see
for example \cite{An1, CC1, CG1, CT, CN, Fu, GW, NT, Ro, SSW} and
the reference therein. Especially,  the metric structure of collapsed limits of Riemannian  manifolds with a uniform lower bound on the Ricci curvature was  studied by Cheeger-Colding \cite{CC1} and collaborators.
Regarding the collapsed Gromov-Hausdorff limit of the Ricci--flat
metrics in Theorem \ref{main1}, we have the following result.

First of all, thanks to \cite{To,ZT} we know that the diameter of
$(M,\ti{\omega}_t)$ satisfies $\mathrm{diam}_{\ti{\omega}_t}(M)\leq D,$
for some constant $D$ and for all $t>0$. Furthermore, since
$\ti{\omega}_t\to f^*\omega$ and the base $N$ is not a point, we
also have that $\mathrm{diam}_{\ti{\omega}_t}(M)\geq D^{-1}.$ Given
any sequence $t_{k}\rightarrow  0$, Gromov's precompactness theorem
shows that a subsequence of $(M, \tilde{\omega}_{t_{k}}) $ converges
to some compact path  metric space $(X, d_{X})$ in the
Gromov-Hausdorff topology. Note that because of the upper and lower
bounds for the diameter, if we rescaled the metrics
$\tilde{\omega}_{t_{k}}$ to have diameter equal to one, the
Gromov-Hausdorff limit (modulo subsequences) would be isometric to
$(X, d_X)$ after a rescaling.

\begin{theorem}\label{main2}
In the same setting as Theorem \ref{main1}, for any such limit space
$(X, d_{X})$  there is an open dense  subset $X_{0}\subset X$ such
that $(X_{0}, d_{X})$ is locally isometric to $(N\backslash f(S),
\omega)$, i.e. there is a homeomorphism $\phi:  N\backslash f(S)
\rightarrow X_{0}$ satisfying  that, for any $y\in N  \backslash
f(S)$, there is a neighborhood $B_{y}\subset N \backslash f(S)$ of
$y$ such  that, for $y_{1}$ and $y_{2}\in B_{y}$,
  $$d_{\omega}(y_{1},y_{2})=d_{X}(\phi(y_{1}),\phi(y_{2})).$$
\end{theorem}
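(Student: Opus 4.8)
The plan is to realise the limit $(X,d_X)$ as the base metric by sending each collapsing fibre to a single point. Fix a subsequence $t_k\to 0$ with $(M,\ti\omega_{t_k})\to(X,d_X)$ and choose $\ve_k$-Gromov--Hausdorff approximations $\psi_k\colon(M,\ti\omega_{t_k})\to(X,d_X)$, $\ve_k\to0$. By \eqref{rescaled} the intrinsic diameter of every fibre satisfies $\mathrm{diam}_{\ti\omega_{t_k}}(M_y)=O(\sqrt{t_k})\to0$, so the sets $\psi_k(M_y)$ have diameter tending to $0$ in $X$. I would first define $\phi$ on a countable dense subset of $N\backslash f(S)$ via a diagonal subsequence, setting $\phi(y)=\lim_k\psi_k(x^{(k)})$ for any $x^{(k)}\in M_y$, and then extend it using the uniform continuity furnished by the distance estimate below; that estimate also shows the value is independent of all choices. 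Put $X_0:=\phi(N\backslash f(S))$.

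The heart of the proof is a distance comparison. Fix $y_0\in N\backslash f(S)$ and choose $r>0$ so small that the closed ball $\ov B:=\ov B^{\,\omega}_r(y_0)$ is a compact, geodesically convex subset of $N\backslash f(S)$; then $K:=f^{-1}(\ov B)$ is a compact subset of $M\backslash S$, on which Theorem \ref{main1} gives $\ti\omega_t\to f^*\omega$ in $C^0$. In particular the map $f\colon(K,\ti\omega_t)\to(N,\omega)$ is $(1+o(1))$-Lipschitz, i.e.\ $f^*\omega\le(1+o(1))\ti\omega_t$ on $K$, while horizontal lifts of $\omega$-geodesics have $\ti\omega_t$-length $(1+o(1))$ times their $\omega$-length. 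Set $B_y:=B^{\,\omega}_{r/10}(y_0)$ and take $y_1,y_2\in B_y$, $x_i\in M_{y_i}$. For the upper bound I would lift a minimizing $\omega$-geodesic from $y_1$ to $y_2$ (which stays in $\ov B$) to a horizontal path in $K$ and close it up inside $M_{y_2}$, whose diameter is $O(\sqrt t)$; this yields $d_{\ti\omega_t}(x_1,x_2)\le d_\omega(y_1,y_2)+o(1)$. For the lower bound, any path $\gamma$ from $x_1$ to $x_2$ either stays in $K$, in which case $f\circ\gamma$ is an admissible competitor inside $\ov B$ and the Lipschitz bound gives $\mathrm{length}_{\ti\omega_t}(\gamma)\ge(1-o(1))d_\omega(y_1,y_2)$; or it exits $K$ through $f^{-1}(\partial B^{\,\omega}_r(y_0))$, so that its initial segment inside $K$ already has length at least $(1-o(1))d_\omega(y_1,\partial B^{\,\omega}_r(y_0))\ge\tfrac{9}{10}r(1-o(1))$, which for small $t$ exceeds the upper bound $d_\omega(y_1,y_2)\le\tfrac15 r$. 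Hence $d_{\ti\omega_t}(x_1,x_2)\to d_\omega(y_1,y_2)$ uniformly for $y_1,y_2\in B_y$, and passing through the approximations $\psi_k$ gives $d_X(\phi(y_1),\phi(y_2))=d_\omega(y_1,y_2)$, the asserted local isometry.

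It remains to settle the topological assertions. Continuity and local injectivity of $\phi$ are immediate from the local isometry. Since $f^{-1}(N\backslash f(S))=M\backslash f^{-1}(f(S))$ is open and dense in $M$ and images of dense subsets are dense in a Gromov--Hausdorff limit, $X_0$ is dense in $X$; moreover a small metric ball around $\phi(y_0)$ in $X$ is the limit of the neighbourhoods $f^{-1}(B^{\,\omega}_\rho(y_0))\subset M\backslash f^{-1}(f(S))$, whose images lie in $X_0$, so $X_0$ is open. For global injectivity I would construct the limit map $\ov f\colon X\to\ov N$ into the $d_\omega$-completion of $N\backslash f(S)$ as a Gromov--Hausdorff limit of the maps $f$, using the distance comparison on an exhaustion of $M\backslash S$ by compacta to obtain the required equicontinuity there; then $\ov f\circ\phi=\mathrm{id}$, which forces $\phi$ to be injective and, together with openness, makes $\phi\colon N\backslash f(S)\to X_0$ a homeomorphism.

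The step I expect to be the main obstacle is the lower distance bound: a priori the severe collapsing and the loss of curvature control along $S$ could let near-minimizing paths take shortcuts through a neighbourhood of the critical locus, which would make $d_X$ strictly smaller than $d_\omega$. The device that defeats this is the localisation to the small ball $B_y$, where any excursion toward $S$ is penalised by a definite amount of base length, trapping the minimizer inside the region $K$ of good convergence. The same difficulty resurfaces globally when extending $f$ across $S$ to build $\ov f$, and controlling the equicontinuity of $f$ near the critical locus --- equivalently, ruling out global shortcuts --- is the most delicate point; for the purely local isometry claimed in the theorem, however, the trapping argument is all that is needed.
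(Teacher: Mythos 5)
Your construction of $\phi$ and the two-sided distance comparison (upper bound by lifting base geodesics into a compact region and closing up in a collapsed fibre; lower bound by the trapping/exit-penalty argument inside a small geodesically convex ball) is essentially the paper's own proof of the local isometry part (Lemma \ref{p00.2}), with arbitrary points of the fibres plus \eqref{rescaled} playing the role of the paper's local holomorphic sections, and your openness argument also matches the paper's. The genuine gap is the \emph{density} of $X_0$, which you dispose of with the sentence that ``images of dense subsets are dense in a Gromov--Hausdorff limit.'' That proves the wrong statement. It is true that $M\backslash f^{-1}(f(S))$ is dense in each $(M,\ti{\omega}_{t_k})$, so every point of $X$ is a limit of points $p_k\in M\backslash f^{-1}(f(S))$; but such a limit point lies in $X_0=\phi(N\backslash f(S))$ only when the base points $f(p_k)$ stay inside a \emph{fixed} compact subset of $N\backslash f(S)$, because that is the only region where the convergence defining $\phi$, and the distance comparison behind it, are uniform. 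Nothing prevents $f(p_k)$ from drifting into $f(S)$, and over a small neighbourhood $V$ of $f(S)$ you have no metric control whatsoever: Theorem \ref{main1}, \eqref{c2} and \eqref{rescaled} are all local on $M\backslash S$. A priori the region $f^{-1}(V)$, although of tiny volume, could have large diameter in $(M,\ti{\omega}_{t_k})$ and could even contain metric balls of definite radius, in which case $X\backslash X_0$ would have nonempty interior and density would fail; no local argument near $X_0$ can rule this out.

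Excluding that scenario is exactly what the second half of Section 4 of the paper does, and it needs an ingredient entirely absent from your proposal: volume comparison. By the Monge--Amp\`ere equation \eqref{ma} the $\ti{\omega}_{t_k}$-volume of $f^{-1}(V)$ equals $c_{t_k}t_k^{n-m}\int_{f^{-1}(V)}\omega_M^n$, so the volume \emph{fraction} of the region over a shrinking neighbourhood of $f(S)$ tends to zero; on the other hand, since $\Ric(\ti{\omega}_{t_k})=0$ and the diameters are bounded, Bishop--Gromov (packaged in the paper via the Cheeger--Colding renormalized limit measure $\nu$, its doubling property \eqref{e4.2}, and Lemma \ref{l4.3}, which computes $\nu$ on $X_0$ using the exact volume form) forces any ball $B_{d_X}(x',\rho)\subset X\backslash X_0$ to carry a definite fraction of the total measure. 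The clash between ``definite fraction'' and ``vanishing fraction'' is the proof of density; without an argument of this type your claim is unsupported. A smaller point: the theorem asserts $\phi$ is a homeomorphism onto $X_0$, so global injectivity is part of the statement, and your closing remark that the trapping argument alone suffices is therefore not accurate as stated; however, injectivity does follow from the same trapping argument applied to minimal geodesics joining points over two fixed distinct base points (as in the paper), rather than via a limit map $\ov{f}$ defined on all of $X$, whose equicontinuity near $S$ is precisely what cannot be controlled.
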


In fact we prove that $X\backslash X_0$ has measure zero with
respect to the renormalized limit measure of \cite{CC1}, which implies that $X_0$ is dense in $X$.
It would be interesting to prove that the metric completion of
$(N\backslash f(S), \omega)$ is isometric to $(X,d_X)$. For $K3$ surfaces this was proved by Gross-Wilson \cite{GW}.

As an application of Theorem  \ref{main1} and Theorem  \ref{main2}
we study the metric degenerations of  families  of polarized  hyperk\"ahler manifolds in the large complex structure limit. In \cite{SYZ}, Stominger, Yau and Zaslow proposed a conjecture, known as the SYZ conjecture,  about constructing the mirror manifold of a given
Calabi-Yau manifold  via  special Lagrangian fibrations. 
Later another version of the SYZ conjecture was proposed by
Gross-Wilson \cite{GW}, Kontsevich-Soibelman \cite{KS} and Todorov
via degenerations of Ricci--flat K\"ahler-Einstein metrics.  The
conjecture says that if $\{M_{t}\}$,
$t\in\Delta\backslash\{0\}\subset\mathbb{C} $, is a family of
polarized Calabi-Yau $n$-manifolds whose complex structures tend to a large complex
structure limit point
when $t\rightarrow 0$, and  $\omega_{t}$ is the Ricci--flat
K\"ahler-Einstein metric representing the polarization on $M_{t}$,
  then after rescaling $(M_{t}, \omega_{t})$ to
have diameter $1$, they collapse to a compact metric space $(X,
d_{X})$ in the  Gromov-Hausdorff sense. Furthermore, a dense open
subset $X_{0}\subset X$ is a smooth manifold of real dimension $n$,
and the codimension of $X\backslash X_{0}$ is bigger or equal to
$2$. This conjecture holds trivially for tori, and was verified for
$K3$ surfaces by Gross-Wilson in \cite{GW}.

In the third main result of this paper
we consider this conjecture for higher dimensional hyperk\"ahler
manifolds.  Let $(M,I)$ be a
compact hyperk\"ahler manifold of complex dimension $2n$ with a
Ricci--flat K\"ahler metric $\omega_I$.  We assume that there
is an ample line bundle over $M$ with the  first Chern class
$[\omega_I]$, that we have a holomorphic fibration $f:M\to N$ as
before with $N$ a projective variety, and that there is a
holomorphic section $s:N\to M$.  Under these assumptions, it is known
that $N=\mathbb{CP}^n$ \cite{Hw}, and that the smooth fibers of $f$
are complex Lagrangian tori \cite{Mats}. If we perform a
hyperk\"ahler rotation of the complex structure, the fibers become
special Lagrangian, and we are exactly in the setup of Strominger,
Yau and Zaslow \cite{SYZ}. We furthermore assume that the polarization induced
on the torus fibers is principal. In this case, the SYZ mirror
symmetry picture predicts that $M$ is mirror to itself, and that a
large complex structure limit is mirror to a large K\"ahler
structure limit. We use this as our definition of large complex
structure limit, so we have a family of polarized hyperk\"ahler structures
$(M,\check{\Omega}_s)$ with Ricci-flat K\"ahler
metric $\check{\omega}$ which approach a large complex structure
limit as $s\to \infty$. By assuming the validity of a standard
conjecture on hyperk\"ahler manifolds (Conjecture
\ref{conj2.3}), we prove:

\begin{theorem}\label{main3} In the above situation, denote $\check{M}_{s}$ the hyperk\"ahler
manifold with period $\check\Omega_s$, and $d_{s}={\rm
diam}_{\check{\omega}}(\check{M}_{s})$. Then, for any sequence
$s_{k}\rightarrow \infty$, a subsequence of  $(\check{M}_{s_{k}},
d_{s_{k }}^{-2}\check{\omega})$ converges in the Gromov-Hausdorff sense to a compact  metric space $(X,d_{X})$, which has  an open dense subset  $(X_{0},d_{X})$ locally isometric to an open non-complete smooth Riemannian manifold $(N_{0}, g)$ with $\dim_{\mathbb{R}}N_{0}=\frac{1}{2}\dim_{\mathbb{R}}M $.
\end{theorem}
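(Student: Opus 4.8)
The plan is to reduce Theorem \ref{main3} to Theorem \ref{main2} by means of a hyperk\"ahler rotation, using the fundamental fact that a rotation within the twistor sphere changes the complex structure but leaves the underlying Riemannian metric untouched. By our mirror-symmetric definition of the large complex structure limit, the family $(M,\check{\Omega}_s)$ is precisely the rotation of $(M,I)$ equipped with the degenerating K\"ahler classes $\omega_t=\omega_0+t\omega_M$ as $t\to 0$. Concretely, I would invoke Conjecture \ref{conj2.3} to produce, for each $s$, a complex structure $\check{I}_s$ lying in the twistor sphere of the Ricci--flat metric $\ti{g}_t$ of $\ti{\omega}_t$, whose period is $\check{\Omega}_s$ and for which the holomorphic Lagrangian fibration $f:M\to N=\mathbb{CP}^n$ (with torus fibers, by \cite{Mats}) survives as a holomorphic fibration, the fiber class matching the principal polarization under rotation. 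This sets up an explicit monotone correspondence $s\leftrightarrow t$ with $s\to\infty$ if and only if $t\to 0$.

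The key point is that, since $\check{I}_s$ and $I$ lie in the same twistor sphere, the Ricci--flat Riemannian metric $\check{g}_s$ underlying $\check{\omega}$ on $\check{M}_s$ coincides with the Ricci--flat metric $\ti{g}_t$ underlying $\ti{\omega}_t$ on $(M,I)$; only the compatible complex structure has changed. Hence $(\check{M}_s,\check{\omega})$ and $(M,\ti{\omega}_t)$ are isometric as Riemannian manifolds, and in particular $d_s=\mathrm{diam}_{\check{\omega}}(\check{M}_s)=\mathrm{diam}_{\ti{\omega}_t}(M)$. By \cite{To,ZT} together with the non-triviality of the base, these diameters are uniformly bounded above and below, so the rescaled metrics $d_s^{-2}\check{g}_s=d_s^{-2}\ti{g}_t$ stay in a compact family for the Gromov--Hausdorff topology.

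With this dictionary in place, the conclusion is a transcription of Theorem \ref{main2}. Given $s_k\to\infty$ with corresponding $t_k\to 0$, Gromov precompactness yields a subsequence along which $(\check{M}_{s_k},d_{s_k}^{-2}\check{\omega})=(M,d_{s_k}^{-2}\ti{\omega}_{t_k})$ converges to a compact metric space $(X,d_X)$, and Theorem \ref{main2} supplies an open dense subset $X_0\subset X$ locally isometric to $(N\backslash f(S),\omega)$, where $\Ric(\omega)=\omega_{\rm WP}$. I would then take $N_0:=N\backslash f(S)$ with the Riemannian metric $g$ underlying $\omega$: this is a smooth open manifold, non-complete because the discriminant $f(S)$ is a nonempty proper subvariety near which $\omega$ degenerates (exactly as in the $K3$ case of \cite{GW}). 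Since $N=\mathbb{CP}^n$ has real dimension $2n$ while $\dim_{\mathbb{R}}M=4n$, we obtain $\dim_{\mathbb{R}}N_0=2n=\frac{1}{2}\dim_{\mathbb{R}}M$, completing the proof.

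The main obstacle is the first step. One must make the hyperk\"ahler rotation rigorous and verify that the mirror family $\check{\Omega}_s$ really does traverse a path ending at the large complex structure limit point, with the fibration and the principal polarization transported correctly under rotation; this is precisely what Conjecture \ref{conj2.3} is needed to guarantee. Once the correspondence $s\leftrightarrow t$ and the isometry $\check{g}_s=\ti{g}_t$ are secured, the remaining work is purely bookkeeping: matching the normalization $d_s^{-2}$ with the diameter rescaling implicit in Theorem \ref{main2}, for which the two-sided diameter bounds of \cite{To,ZT} are exactly what is required.
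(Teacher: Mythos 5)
Your overall strategy coincides with the paper's: use hyperk\"ahler rotation to trade the degenerating complex structures for a fixed complex structure with degenerating K\"ahler classes, then quote Theorems \ref{main1} and \ref{main2} together with the two-sided diameter bounds from \cite{To,ZT}. Your second and third paragraphs (rotation preserves the underlying Riemannian metric, uniqueness of the Ricci-flat representative, diameter bookkeeping, and $\dim_{\RR}N_0=2n=\frac{1}{2}\dim_{\RR}M$) are exactly the paper's closing argument. The genuine gap is in your first step: the claim that $(M,\check\Omega_s)$ \emph{is} the hyperk\"ahler rotation of $(M,I)$ equipped with the Ricci-flat metrics $\ti{\omega}_t$ in the classes $\omega_0+t\omega_M$, so that $\check{g}_s=\ti{g}_t$ and the original fibration $f$ can be reused. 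This identification is not merely unproven; it is impossible in the intended setup. Every complex structure in the twistor sphere of $\ti{g}_t$ has period plane contained in $\mathrm{span}_{\RR}\left([\ti{\omega}_t],[\Re\Omega_I],[\Im\Omega_I]\right)$. If $\check\Omega_s=m_{E,\sigma}(\mn s\omega)$ lay in that span, then in particular $\Im\check\Omega_s=s\omega+cE$ would; pairing with $[\Re\Omega_I]$ and $[\Im\Omega_I]$ (against which both $\omega$ and $E$ are $q_M$-orthogonal, being of type $(1,1)$ for $I$), and then with $E$ (using $q_M(\omega,E)=0$, $q_M(E)=0$, $q_M(E,\sigma)\neq 0$) forces $s\omega$ to be a multiple of $E$, contradicting $q_M(\omega)>0$. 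So $(\check{M}_s,\check\omega)$ is not a rotation of $(M,I,\ti{\omega}_t)$, and the dictionary $s\leftrightarrow t$ with $\check{g}_s=\ti{g}_t$ collapses at the outset.

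The paper rotates in the opposite direction and lands on a genuinely \emph{new} complex structure: starting from $(\check{M}_s,\check\Omega^{\nor}_s,\check\omega)$ one obtains the pair $(\check\Omega_{s,J},\check\omega_{s,J})$ with the same underlying Riemannian metric, where the period $\check\Omega_{s,J}$ is independent of $s$ but its plane is spanned modulo $E$ by $\omega$ and $\check\omega$ --- not by $\Re\Omega_I$ and $\Im\Omega_I$ --- so this is not $(M,I)$ and carries no a priori fibration. This is precisely where Conjecture \ref{conj2.3} enters, and its role is different from what you describe: it does not produce complex structures in a twistor sphere, nor does it transport $f$ (after rotation the fibers of $f$ become special Lagrangian, not holomorphic, so $f$ cannot ``survive as a holomorphic fibration''). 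Rather, after \emph{assuming} that $E$ is nef and that $\check\omega_{s,J}$ is K\"ahler on the rotated structure for $s\geq s_0$, the conjecture applied to the nef class $E$ with $q_M(E)=0$ yields a new holomorphic Lagrangian fibration $f'\colon M\to N'=\mathbb{CP}^n$ (torus fibers by \cite{Mats}, base by \cite{Hw}); choosing $s_0$ so that $\frac{s_0}{2}\sqrt{q_M(\check\omega)q_M(\omega)}\,E=f'^{*}\alpha$ with $\alpha$ ample, the rescaled classes become $\check\omega_{t,J}=t\check\omega_{s_0,J}+f'^{*}\alpha$, and Theorems \ref{main1} and \ref{main2} are applied to $f'$, not to $f$. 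If you replace your first step by this construction, the remainder of your argument goes through as written.
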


This proves the conjecture of Gross-Wilson \cite{GW},
Kontsevich-Soibelman \cite{KS} and Todorov in our situation,
modulo these assumptions, except
for the statement that  $\mathrm{codim}_{\mathbb{R}}(X\backslash X_0)\geq 2$.
Again, this was proved by Gross-Wilson \cite{GW} in the case of $K3$ surfaces.\\

This paper is organized as follows. In Section \ref{sect1} we study SYZ mirrors of some hyperk\"ahler manifolds, and derive Theorem \ref{main3} as a consequence of Theorems \ref{main1} and \ref{main2}. In Section \ref{sect2} we construct semi-flat background metrics on the total space of a holomorphic torus fibration. Theorem  \ref{main1} is proved in Section \ref{sect3} while Theorem \ref{main2} is proved in Section \ref{sect4}.\\

\noindent {\bf Acknowledgements:} Most of this work was carried out
while the second-named author was visiting the Mathematical Science
Center of Tsinghua University in Beijing, which he would like to
thank for the hospitality. He is also grateful to
D.H. Phong and S.-T. Yau for their
support and encouragement, and to J. Song for many
useful discussions. Some parts of this paper were obtained
while the third-named  author's was visiting University of
California San Diego and  Institut des Hautes \'{E}tudes
Scientifiques. He would like to  thank UCSD and IH\'{E}S for
the hospitality, and he is also grateful to Professor Xiaochun Rong
for helpful discussions.

\section{Hyperk\"ahler mirror symmetry}\label{sect1}

In this section we discuss a version of mirror symmetry
for hyperk\"ahler manifolds analogous to the one used for K3
surfaces in \cite{GW}.
The situation for general hyperk\"ahler manifolds is considerably
less developed, however, and we shall have to make many assumptions
in this discussion. The goal is to explain the background of Theorem
\ref{main3} and show it follows from Theorems \ref{main1} and
\ref{main2}. 
This discussion largely represents a summary of known results, and
is completely analogous to \cite{GW}.

First we review known facts about periods of hyperk\"ahler
manifolds.
Fix $M$ a manifold of real dimension $4n$ which supports a
hyperk\"ahler manifold structure with holonomy being the full group
$Sp(n)$. Set $L=H^2(M,\ZZ)$,
$L_{\RR}:=L\otimes_{\ZZ}\RR$, $L_{\CC}:=L\otimes_{\ZZ} \CC$. Then
there is a real-valued non-degenerate quadratic form
$q_M:L\rightarrow \RR$, called the \emph{Beauville-Bogomolov form},
with the property that there is a constant $c$ such that
\[
q_M(\alpha)^n=c\int_M \alpha^{2n}
\]
for $\alpha\in L$, of signature $(+,+,+,-,\cdots,-)$. We write
$q_M(\cdot,\cdot)$ for the induced pairing, with
$q_M(\alpha,\alpha)=q_M(\alpha)$.

We can define the \emph{period domain} of $M$ to be
\[
\shP_M:=\{[\Omega]\in \PP(L_{\CC})\,|\, q_M(\Omega)=0,\quad
q_M(\Omega,\bar\Omega)>0\}.
\]
The Teichm\"uller space of $M$, ${\bf Teich}_M$, is the set of
hyperk\"ahler complex structures on $M$ modulo elements of
$\Diff_0(M)$, the diffeomorphisms of $M$ isotopic to the identity.
By the Bogomolov-Tian-Todorov theorem, this is a (non-Hausdorff)
manifold. There is a period map
\[
\Per:{\bf Teich}_M\rightarrow \shP_M
\]
taking a complex structure on $M$ to the class of the line
$H^{2,0}(M)$. Then $\Per$ is \'etale, and was proved to be
surjective by Huybrechts in \cite{Huy1}. 
While recently Verbitsky \cite{Verb1}
proved a suitably formulated global Torelli theorem, one
must keep in mind that $\Per$ is not, in general, a diffeomorphism.

Next consider a complex structure on $M$ and Ricci--flat K\"ahler
metric $\omega_I$ making $M$ hyperk\"ahler. Then a choice of a
holomorphic symplectic two-form $\Omega_I$, along with $\omega_I$,
completely determines this structure. In particular, if we write
$\Omega_I=\omega_J+\mn\omega_K$, we can normalize $\Omega_I$ so that
$q_M(\omega_I)=q_M(\omega_J)=q_M(\omega_K)$. Furthermore,
necessarily $q_M(\omega_I,\omega_J)=q_M(\omega_I,\omega_K)
=q_M(\omega_J,\omega_K)=0$. The triple $\omega_I,\omega_J,\omega_K$
is called a \emph{hyperk\"ahler triple}. It gives rise to an $S^2$
worth of complex structures compatible with the same hyperk\"ahler
metric: in particular, one has the $J$ complex structure with
holomorphic symplectic form $\Omega_J:=\omega_K+\mn\omega_I$ and
K\"ahler form $\omega_J$, and the $K$ complex structure with
holomorphic symplectic form $\Omega_K:=\omega_I+\mn\omega_J$ and
K\"ahler form $\omega_K$.\\

Now suppose that we are given a complex structure on $M$ such that there
is a fibration $f:M\rightarrow N$, with fibers being holomorphic
Lagrangian subvarieties of $M$. Suppose furthermore that $N$ is a
K\"ahler manifold. Then by results of Matsushita (see \cite{Mats}
and \cite{GHJ}, Proposition 24.8) the smooth
fibers of $f$ are complex tori and $N$ is a Fano manifold with
$b_2(N)=1$. Furthermore, if $M$ is projective of complex dimension
$2n$ then $N=\mathbb{CP}^n$ by a result of Hwang \cite{Hw}. If
$M_y$ is a fiber of $f$, then $\omega_J|_{M_y}=\omega_K|_{M_y}=0$,
from which it follows that $\Im(\Omega_K^n)|_{M_y}=0$, so that
after hyperk\"ahler rotation fibers
of $f$ are special Lagrangian.

The Strominger-Yau-Zaslow conjecture \cite{SYZ} predicts that mirror
symmetry can be explained via dualizing such a special Lagrangian
torus fibration. In a general situation, it can be hard to dualize
torus fibrations, because of singular fibres.
The case that $M$ is a K3 surface, treated in
detail in \cite{GW}, is rather special because Poincar\'e duality
gives a canonical isomorphism between a two-torus and its dual.

With some additional assumptions, a similar situation holds in the
hyperk\"ahler case. Suppose that the K\"ahler form $\omega_I$ is
integral, so that there is an ample line bundle $\shL$ on $X$ whose
first Chern class is represented by $\omega_I$. The restriction of
this line bundle to a non-singular fiber $M_y$ then induces a
polarization of some type $(d_1,\ldots,d_n)$. In particular there is
a canonical map $M_y\rightarrow M_y^{\vee}$ given by
\[
M_y\ni x\mapsto \shL|_{M_y}\otimes t_x^*\shL^{-1}|_{M_y}\in
M_y^{\vee}.
\]
Here $M_y^{\vee}$ is the dual abelian variety to $M_y$, classifying
degree zero line bundles on $M_y$, and $t_x:M_y\rightarrow M_y$ is
given by translation by $x$, which makes sense once one chooses an
origin in $M_y$. The kernel of this map is
$(\ZZ/d_1\ZZ\oplus\cdots\oplus\ZZ/d_n\ZZ)^{\oplus 2}$. In
particular, if $f$ possesses a section $s:N\rightarrow M$, and
$N_0:=N\setminus f(S)$ where $S$ is as in the introduction, then
the dual of $f^{-1}(N_0)\rightarrow N_0$ can be described as a
quotient map, given by dividing out by the kernel of the
polarization on each fiber. One can then hope that this dual
fibration can be compactified to a hyperk\"ahler manifold.

In general, if $M_y$ carries a polarization of type
$(d_1,\ldots,d_n)$, it is not difficult to check that the dual abelian variety $M_y^{\vee}$
carries a polarization of type
$(d_n/d_n,d_n/d_{n-1},\ldots,d_n/d_1)$. Thus if the polarization is not
principal, the SYZ dual manifold need not coincide with $M$: see
examples of non-principally polarized fibrations in
Example 3.8 and Remark 3.9 of
\cite{Sawon}. It is possible that Sawon's fibrations do not have
duals which are hyperk\"ahler manifolds, as a natural
compactification might be a holomorphic symplectic variety without a
holomorphic symplectic resolution of singularities.

On the other hand, if $\omega_I$ induces a \emph{principal}
polarization on each fiber $M_y$, i.e., the map $M_y\rightarrow
M_y^{\vee}$ is an isomorphism, then the SYZ dual of the fibration
$f^{-1}(N_0)\rightarrow N_0$, assuming again the existence of a
section, can be canonically identified with $f^{-1}(N_0)\rightarrow
N_0$, and thus it is natural to consider $f:M\rightarrow N$ to be a
self-dual fibration, at least at the purely topological level. In
this case, and only in this case, SYZ mirror symmetry predicts that
hyperk\"ahler manifolds are self-mirror. The idea that hyperk\"ahler
manifolds should be self-mirror was first suggested and explored by
Verbitsky in \cite{Verb2}.

In this case only, we can be more explicit about mirror symmetry. We
summarize our assumptions so far:

\begin{assumptions}
Let $M_I$ be a hyperk\"ahler manifold with $f:M_I\rightarrow N$ a
complex torus fibration, along with a section $s:N\rightarrow M_I$
and an ample line bundle $\shL$ with first Chern class represented
by a hyperk\"ahler metric $\omega_I$. We assume further the induced
polarization on the smooth fibers of $f$ is principal and that $N$
is projective.
\end{assumptions}

 Thus, with these assumptions, it is natural to assume
that mirror symmetry exchanges complex and K\"ahler moduli for the
fixed underlying space $M$. This can be described at the level of
period domains as follows.

Let $\sigma\in L_{\RR}$ be the class represented by $\omega_I$. Fix
an integral K\"ahler class $\omega_N$ on $N$, and let $E\in L$ be
represented by $f^*\omega_N$, so that $q_M(E)=0$.

\begin{lemma} In the above situation, we have
$q_M(E,\sigma)\not=0$.
\end{lemma}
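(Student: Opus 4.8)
The plan is to reduce the non-vanishing of $q_M(E,\sigma)$ to the strict positivity of a single intersection number, by combining the Fujiki relation with the positivity of the Beauville--Bogomolov form on the K\"ahler cone. First I would record that $q_M(E)=0$: since $N$ has complex dimension $n$ we have $(f^*\omega_N)^{n+1}=0$, hence $E^{2n}=0$ in $H^{4n}(M,\RR)$, and the Fujiki relation $q_M(E)^n=c\int_M E^{2n}=0$ forces $q_M(E)=0$ (this is already implicit in the setup). The representative I will use for $\sigma$ is the K\"ahler form $\omega_I$, and for $E$ the semipositive form $f^*\omega_N$.

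The key algebraic step is to substitute $\alpha=\sigma+tE$ into the Fujiki relation $\int_M\alpha^{2n}=c\,q_M(\alpha)^n$ and compare the coefficients of $t$. Using $q_M(E)=0$, the right-hand side is $c\bigl(q_M(\sigma)+2t\,q_M(\sigma,E)\bigr)^n$, whose linear coefficient is $2cn\,q_M(\sigma)^{n-1}q_M(\sigma,E)$, while the left-hand side has linear coefficient $2n\int_M\sigma^{2n-1}\wedge E$. This yields the identity
$$\int_M\sigma^{2n-1}\wedge E=c\,q_M(\sigma)^{n-1}\,q_M(\sigma,E).$$

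Next I would show the integral on the left is strictly positive. For a K\"ahler form on a manifold of complex dimension $2n$ one has the pointwise identity $\omega_I^{2n-1}\wedge f^*\omega_N=\tfrac{1}{2n}\bigl(\tr{\omega_I}{f^*\omega_N}\bigr)\,\omega_I^{2n}$, and the trace is nonnegative everywhere because $f^*\omega_N\geq 0$, being strictly positive precisely where $df$ has positive rank. Since $f$ is a submersion on the dense open set $M\setminus S$, the integrand is strictly positive there, so $\int_M\sigma^{2n-1}\wedge E>0$. Finally, the Fujiki constant satisfies $c>0$, and $q_M(\sigma)=q_M(\omega_I)>0$ since K\"ahler classes lie in the positive cone of a form of signature $(+,+,+,-,\dots,-)$; hence $q_M(\sigma)^{n-1}>0$, and the identity above forces $q_M(E,\sigma)=q_M(\sigma,E)>0$, in particular nonzero.

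The only genuinely substantive point is the strict positivity of $\int_M\sigma^{2n-1}\wedge E$, which rests on $f$ being a submersion over the dense set $N\setminus f(S)$; everything else is bookkeeping with the polarized Fujiki relation and pinning down the sign conventions, namely the positivity of the Fujiki constant $c$ and of $q_M(\sigma)$ on the K\"ahler cone.
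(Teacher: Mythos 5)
Your proof is correct and follows essentially the same route as the paper: the paper simply cites \cite{GHJ}, Exercise 23.2, which is exactly the polarized Fujiki relation you derive by expanding $q_M(\sigma+tE)^n=c\int_M(\sigma+tE)^{2n}$ in $t$, and then concludes by positivity of $q_M(\sigma)$ and of $\int_M\sigma^{2n-1}\wedge f^*\omega_N$. The only difference is that you prove the cited identity and spell out the positivity of the integral (via the trace argument on $M\setminus S$) rather than asserting it, which is fine.
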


\begin{proof}
By \cite{GHJ}, Exercise 23.2, we have
\[
q_M(E,\sigma)\int_M \sigma^{2n}=2q_M(\sigma)\int_M
\sigma^{2n-1}\wedge f^*\omega_N\not=0,
\]
so $q_M(E,\sigma)\not=0$.
\end{proof}

Denote by $E^{\perp}\subseteq L_{\RR}$ the orthogonal complement of
$E$ under $q_M$, and denote by $E^{\perp}/E$ the quotient space
$E^{\perp}/\RR E$. Then $q_M$ induces a quadratic form on
$E^{\perp}/E$. Let
\[
\shC(M):=\{x\in E^{\perp}/E\,|\, q_M(x)>0\},
\]
and define the \emph{complexified K\"ahler moduli space} of $M$ to
be
\[
\shK(M):=(E^{\perp}/E)\oplus i\shC(M)\subseteq
(E^{\perp}/E)\otimes\CC.
\]

We then have an isomorphism
\[
m_{E,\sigma}:\shK(M)\rightarrow \shP_M\setminus E^{\perp}
\]
via, representing an element of $(E^{\perp}/E)\otimes\CC$ by
$\alpha\in E^{\perp}\otimes\CC$,
\[
\alpha\mapsto \left[\frac{1}{q_M(E,\sigma)}\sigma+\alpha -\frac{1}{2}
\left(\frac{q_M(\sigma)}
{q_M(E,\sigma)^2}+q_M(\alpha)+2\frac{q_M(\alpha,\sigma)}{q_M(E,\sigma)}\right)E\right].
\]
Indeed, one first checks that this is independent of which
representative $\alpha$ is chosen. One then notes that the
coefficient of $E$ is chosen so that $q_M(m_{E,\sigma}(\alpha))=0$,
and $q_M(m_{E,\sigma}(\alpha),
m_{E,\sigma}(\bar\alpha))=2q_M(\Im\alpha)>0$ by assumption that
$\alpha\in \shK(M)$. Further, $m_{E,\sigma}$ is clearly injective,
since $\alpha=m_{E,\sigma}(\alpha)-\sigma/q_M(E,\sigma) \mod E$. It
is surjective, since given $[\Omega]\in \shP_M\setminus E^{\perp}$,
we can rescale $\Omega$ so that $q_M(\Omega,E)=1$, and then
$[\Omega]=m_{E,\sigma}(\Omega-\sigma/q_M(E,\sigma)\mod E)$.

We can then view the mirror map $m_{E,\sigma}$ described above as
realising mirror symmetry on the level of period domains,
defining an exchange of data
\[
(M,\Omega, {\bf B}+\mn\omega)\leftrightarrow (M,\check\Omega,
\check{\bf B} +\mn\check\omega).
\]
Here $[\Omega],[\check\Omega]\in \shP_M$, with
$q_M(E,\Omega),q_M(E,\check\Omega)\not=0$, so that we can assume
$\Omega$ and $\check\Omega$ are normalized with
$q_M(E,\Omega)=q_M(E,\check\Omega)=1$. Furthermore, ${\bf
B},\check{\bf B}\in E^{\perp}/E$ and $\omega, \check\omega\in
E^{\perp}$ satisfy $q_M(\omega,\Omega)=q_M(\check\omega,
\check\Omega)=0$ and $q_M(\omega),q_M(\check\omega)>0$. The
relationship between the two triples is that
$\check\Omega=m_{E,\sigma}({\bf B}+\mn\omega)$ and $\check{\bf B},
\check\omega$ are the unique cohomology classes satisfying the above
conditions and $\Omega=m_{E,\sigma}(\check{\bf B}+\mn\check\omega)$.
Indeed, $\check{\bf B}$ and $\check\omega$ exist, since as
$q_M(E,\Omega)=1$, we can write $\Omega=\frac{1}{q_M(E,\sigma)}\sigma+\check{\bf B}+\mn\check\omega \mod E$, and
replacing a chosen representative $\check\omega$ with
$\check\omega-(q_M(\check\omega,\sigma)/q_M(E,\sigma)-q_M(\check\omega,
{\bf B}))E$, one guarantees that $q_M(\check\Omega,\check\omega)=0$.

This mirror symmetry on the level of period domains doesn't quite
give an exact mirror symmetry on the level of moduli spaces, since
global Torelli does not in general hold,
so there might be a number of choices of complex structure on $M$
with period $[\Omega]$. In addition, $\omega$ or $\check\omega$ need
not represent a K\"ahler form except for very general choices of
complex structure.

Nevertheless, this allows us to identify a large complex structure
limit as being mirror to a large K\"ahler limit. The family
\[
\left(M,\Omega=\frac{1}{q_M(E,\sigma)}\sigma+\check{\bf
B}+\mn\check\omega \bmod E, s\omega\right),
\]
represents a large K\"ahler limit, with the K\"ahler class
moving off to infinity while the complex structure is fixed, and this is mirror to the triple
\[
\left(M,\check\Omega_s=\frac{1}{q_M(E,\sigma)}\sigma+\mn
s\omega\bmod E,\check{\bf B} +\mn\check\omega\right).
\]
If, for each $s$, we have an actual hyperk\"ahler manifold with
period $\check\Omega_s$ and K\"ahler form  $\check\omega$, we would
like to understand the limiting metric behaviour.

To do so, we use hyperk\"ahler rotation, and to do this we need to
normalize the holomorphic symplectic form, defining
\[
\check\Omega_s^{\nor}=s^{-1}\sqrt{\frac{q_M(\check\omega)}{q_M(\omega)}}
\check\Omega_s.
\]
Then we have
$q_M(\Re\check\Omega^{\nor}_s)=q_M(\Im\check\Omega^{\nor}_s)=
q_M(\check\omega)$. So $\Re\check\Omega^{\nor}_s$,
$\Im\check\Omega^{\nor}_s$ and $\check\omega$ form a hyperk\"ahler
triple, and hence we can hyperk\"ahler rotate to obtain a
hyperk\"ahler manifold with holomorphic two-form
\[
\check\Omega_{s,J}:=\Im \check\Omega^{\nor}_s+\mn\check\omega
=\sqrt{\frac{q_M(\check\omega)}{q_M(\omega)}}\left(\omega-\frac{q_M(\omega,\sigma)}
{q_M(E,\sigma)}E\right)+\mn\check\omega
\]
and K\"ahler form
\[
\check\omega_{s,J}=\Re\check\Omega^{\nor}_s
=\sqrt{\frac{q_M(\check\omega)}{q_M(\omega)}}
\left[\frac{1}{s}\left(\frac{1}{q_M(E,\sigma)}\sigma-\frac{1}{2}\frac{q_M(\sigma)}{q_M(E,\sigma)^2}E\right)+
\frac{s}{2}q_M(\omega)E\right].
\]
We note that the period $\check\Omega_{s,J}$ is in fact independent
of $s$, so we can fix the complex structure on $M$ independent of
$s$. Assume  that $E$ is the first Chern class of a nef line bundle
on $M$ with respect to a complex structure with period
$\check\Omega_{s,J}$, and $\check\omega_{s,J}$ is a K\"ahler class
with respect to this complex structure if $s \geq s_{0}$, for some
$s_{0}\gg 0$. We now take $s=s_{0}\sqrt{\frac{t+1}{t}}$, so that as
$t$ goes to zero, $s$ goes to infinity and we define the rescaled metrics
\[
\check\omega_{t,J}^{\mathrm{nor}}=\sqrt{t(t+1)}\check\omega_{s(t),J}=t\check\omega_{s_{0},J}+
\frac{s_{0}}{2}\sqrt{q_M(\check\omega)q_M(\omega)}E.
\]
So as
$t\rightarrow 0$, $\check\omega_{t,J}^{\mathrm{nor}}$ moves on a straight line
towards $\frac{s_{0}}{2}\sqrt{q_M(\check\omega)q_M(\omega)}E$,
and $\check\omega_{s_{0},J}$ is K\"ahler.

To relate this to the results of this paper, we have the following
conjecture, stated in \cite{HT, verb3}:

\begin{conjecture}\label{conj2.3}
Let $M$ be an irreducible hyperk\"ahler manifold and $\shL$ a
non-trivial nef bundle on $M$, with $q_M(c_1(\shL))=0$. Then $\shL$
induces a holomorphic map $f':M\rightarrow N'$ to a projective variety
$N'$ with $\shL^m \cong f'^*(\O(1))$ for some $m>0$.
\end{conjecture}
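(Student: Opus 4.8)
The plan is to prove that $\shL$ is \emph{semiample}, i.e.\ that $\shL^m$ is globally generated for some $m>0$; once this is known, the associated morphism $f'=\Phi_{|\shL^m|}:M\to N'$ is the desired fibration onto a projective variety, with $\shL^m\cong f'^*\O(1)$ automatically. The first step is to pin down the numerical size of $\shL$. By the Fujiki relation there is a constant $c$ with $\int_M c_1(\shL)^{2n}=c\,q_M(c_1(\shL))^n=0$, and, using the polarized (mixed) form of that relation, the hypothesis $q_M(c_1(\shL))=0$ forces $\int_M c_1(\shL)^{n+1}\wedge\beta_1\wedge\cdots\wedge\beta_{n-1}=0$ for all $\beta_i\in H^2(M,\RR)$: among the $n+1$ factors of $c_1(\shL)$, any perfect matching must pair two of them, producing a vanishing factor $q_M(c_1(\shL))$. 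On the other hand, for a Kähler class $H$ the same bookkeeping leaves $\int_M c_1(\shL)^n\wedge H^n$ proportional to a positive power of $q_M(c_1(\shL),H)>0$, hence nonzero. Thus the numerical dimension of the nef class $c_1(\shL)$ is exactly $n=\tfrac12\dim_{\CC}M$, and the heart of the matter is to upgrade this numerical statement to the effective one, $\kappa(\shL)=\nu(\shL)=n$ together with the semiampleness that should accompany it.

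The obvious tool, the Kawamata--Shokurov base-point-free theorem, \emph{does not apply directly}: since $M$ is Calabi--Yau we have $K_M\cong\O_M$, so the class $a\,c_1(\shL)-K_M=a\,c_1(\shL)$ is nef but never big (as $q_M(c_1(\shL))=0$ gives $c_1(\shL)^{2n}=0$), and the bigness hypothesis fails. This is precisely the source of difficulty, and the reason the statement remains a conjecture. To circumvent it I would work in families, exploiting the fact that both hypotheses are Hodge-theoretic: the condition $q_M(c_1(\shL))=0$ depends only on the period $[\Omega]\in\shP_M$ and the class of $\shL$ in $L$, while nefness is controlled by Huybrechts' and Verbitsky's monodromy-equivariant description of the Kähler and positive cones. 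The idea is to deform the pair $(M,\shL)$ within ${\bf Teich}_M$, keeping $c_1(\shL)$ of type $(1,1)$ and nef with $q_M=0$, to a pair $(M',\shL')$ of the same deformation type on which a Lagrangian fibration inducing $\shL'$ is already known to exist; by Matsushita and Hwang such a fibration is over $\PP^n$, consistent with the target of the conjecture.

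The transport of the fibration back to $(M,\shL)$ is then the crux. Using Verbitsky's global Torelli theorem together with the ergodicity of the monodromy action on the relevant level of the period domain, one expects the pairs admitting such a fibration to form a dense, monodromy-invariant subset of the locus $\{q_M=0\}$ in $\shP_M$. The genuinely hard point is to descend from this dense locus to our given $(M,\shL)$: semiampleness is neither visibly open nor visibly closed in families, and the period map $\Per$ is only étale rather than injective, so one must control how the base locus of $|\shL^m|$ behaves as the period varies and rule out jumping in the limit. I expect this specialization step --- guaranteeing that the fibration survives on the original $(M,\shL)$ and not merely on a very general deformation --- to be the main obstacle, and it is exactly the gap that currently separates the known generic results from the full conjecture.
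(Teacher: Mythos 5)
There is no ``paper's own proof'' to compare against here, and that is the essential point: the statement you were asked to prove is Conjecture \ref{conj2.3}, the hyperk\"ahler SYZ/abundance conjecture of Hassett--Tschinkel \cite{HT} and Verbitsky \cite{verb3}. The paper does not prove it --- it explicitly \emph{assumes} it in order to deduce Theorem \ref{main3}, and only remarks that it would follow from the log abundance conjecture if some multiple of $\shL$ were effective, and that it has been studied in \cite{AC, COP, HT, verb3}. So any complete ``proof'' you produced should have been treated with suspicion; to your credit, you did not produce one, and you correctly located where every attempt breaks down.

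Concretely: your numerical bookkeeping via the polarized Fujiki relation is sound --- with $q_M(c_1(\shL))=0$, any perfect matching of $n+1$ copies of $c_1(\shL)$ against $n-1$ other classes must pair two copies of $c_1(\shL)$ together, while $\int_M c_1(\shL)^n\wedge H^n$ is a positive multiple of $q_M(c_1(\shL),H)^n>0$ for $H$ K\"ahler --- so the nef class $c_1(\shL)$ has numerical dimension exactly $n$. You are also right that Kawamata--Shokurov base-point-freeness is unavailable because $a\,c_1(\shL)-K_M=a\,c_1(\shL)$ is nef but never big. But the passage from $\nu(\shL)=n$ to $\kappa(\shL)=n$ with semiampleness \emph{is} the conjecture; it is not a step one can defer. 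Likewise, your deformation strategy (global Torelli, monodromy ergodicity, density of fibred pairs in the locus $\{q_M=0\}$) reproduces the known line of attack, and the step you flag as ``the genuinely hard point'' --- specializing semiampleness from a dense set of periods to the given pair $(M,\shL)$, where base loci of $|\shL^m|$ can jump and $\Per$ is only \'etale --- is exactly the gap that keeps the statement conjectural (it has since been closed only for specific deformation types, not in the generality stated here). So your proposal is an honest and accurate analysis of an open problem rather than a proof, which is the correct posture given that the paper itself offers none.
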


 If such a map exists, it is necessarily a holomorphic
Lagrangian fibration. If furthermore $M$ is projective then
$N'=\mathbb{CP}^n$ by \cite{Hw}. This conjecture follows from the log abundance
conjecture if some multiple of $\mathcal{L}$ is effective, and has been studied for example in \cite{AC, COP, HT, verb3}.

Let us suppose this conjecture holds. By choosing $s_{0}$ properly,
we assume that $\frac{s_{0}}{2}\sqrt{q_M(\check\omega)q_M(\omega)}$
is a  integer, and thus
$\frac{s_{0}}{2}\sqrt{q_M(\check\omega)q_M(\omega)}E=f'^{*}\alpha$
for an ample class $\alpha$ on $N'$, where $f'$ and $N'$ are obtained
by Conjecture \ref{conj2.3}. Because of the hyperk\"ahler rotation,
the Riemannian metrics defined by $(\check\Omega^{\nor}_{s},
\check\omega)$ and by $(\check\Omega_{s,J},\check{\omega}_{s,J})$
are the same. Therefore, to understand the Gromov-Hausdorff limit of
the large complex structure limit $(M,\check\Omega^{\nor}_{s}, \check\omega)$
(this is the same that appears in the statement of Theorem \ref{main3}) we can instead consider
$(M,\check\Omega_{s,J},\check{\omega}_{s,J})$. Now
$\check\Omega_{s,J}$ is independent of $s$, so we are simply
changing the K\"ahler class, and the rescaled metrics
$\check\omega_{t,J}=\sqrt{t(t+1)}\check\omega_{s(t),J}=t\check\omega_{s_{0},J}+
f'^{*}\alpha$ move towards $f'^{*}\alpha$ along a straight line.
Therefore we are exactly in the setting of Theorem \ref{main1} and Theorem
\ref{main2}, which describe the Gromov-Hausdorff limit of $(M,\check\omega_{t,J})$
as $t$ goes to zero.
But as remarked in the Introduction, we also have that the diameter of $\check\omega_{t,J}$
is bounded uniformly away from zero and infinity, so if we further rescale the metrics $\check\omega_{t,J}$ to have diameter $1$, then up to a subsequence
the Gromov-Hausdorff limit only changes by a rescaling, and Theorem \ref{main3} follows.

\section{Semi-flat metrics}\label{sect2}

 In this section we discuss semi-flat forms and metrics,
extending some results in \cite{GW, H} to our setting.

In general a closed real $(1,1)$-form $\omega_{SF}$ on an open set
$U\subset M\backslash S$ will be called semi-flat if its restriction
to each torus fiber $M_y\cap U$ with $y\in f(U)$ is a flat metric,
which we will always assume to be cohomologous to $\omega_M|_{M_y}$.
If $\omega_{SF}$ is also K\"ahler then we will call it a semi-flat
metric. Semi-flat forms can also be defined when the fibers $M_y$
are not tori but general Calabi-Yau manifolds, by requiring that the
restriction to each fiber be Ricci--flat (see \cite{ST, To1}). They
were first introduced by Greene-Shapere-Vafa-Yau in \cite{GSVY}.

Fix now a small ball $B\subset N\backslash f(S)$ with coordinates
$y=(y_1,\dots y_m)$,
and consider the preimage $f:U=f^{-1}(B)\to B$. This is a holomorphic family of complex tori, and if $B$ is small enough it has a holomorphic section $\sigma_0$,
which we also fix. We can then define a complex Lie group structure on each fiber $M_y=f^{-1}(y)$ with unit $\sigma_0(y)$.
We claim that this family is locally isomorphic to
a family of the form $f':(B\times\mathbb{C}^{n-m})/\Lambda\to B,$ where $h:\Lambda\to B$ is a lattice bundle with fiber $h^{-1}(y)=\Lambda_y\cong \mathbb{Z}^{2n-2m}$, so that $M_y\cong \mathbb{C}^{n-m}/\Lambda_y.$ To see this, note that each fiber $M_y=f^{-1}(y)$ is a torus biholomorphic to $\mathbb{C}^{n-m}/\Lambda_y$ for some lattice $\Lambda_y$ that varies holomorphically in $y$. We choose a basis $v_1(y),\dots,v_{2n-2m}(y)$ of
this lattice, which varies holomorphically in $y$. Given these lattices we can construct the
family $f'$ by taking the quotient of $B\times\mathbb{C}^{n-m}$
by the $\mathbb{Z}^{2n-2m}$-action given by $(n_1,\dots,n_{2n-2m})\cdot(y,z)=(y,z+\sum_i n_i v_i(y))$, where
$z=(z_1,\dots,z_{n-m})\in\mathbb{C}^{n-m}$. Note that different choices of generators give isomorphic quotients. By construction
the fiber $f'^{-1}(y)$ is biholomorphic to $f^{-1}(y)$ for all $y\in B$. A theorem of Kodaira-Spencer \cite{KSp} (see also \cite[Satz 3.6]{We}) then implies that the families $f$ and $f'$ are locally isomorphic, so up to shrinking $B$ there exists a biholomorphism
$(B\times\mathbb{C}^{n-m})/\Lambda\to U$ compatible with the projections to $B$, proving our claim. With this identification, the section $\sigma_0:B\to U$ is induced by the map $B\to B\times\mathbb{C}^{n-m}$ given by $y\mapsto (y,0)$.

Composing this biholomorphism with the quotient map $B\times\mathbb{C}^{n-m}\to(B\times\mathbb{C}^{n-m})/\Lambda$ by the
$\mathbb{Z}^{2n-2m}$-action we get a holomorphic map $p:B\times\mathbb{C}^{n-m}\to U$ such that $f\circ p(y,z)=y$ for all $(y,z)$, and $p$ is a local isomorphism (the map $p$ is also the universal covering map of $U$).

We now assume that $M$ is projective and $[\omega_M]$ is an integral class,
so each complex torus fiber $M_y$, $y\in B$, can
be polarized by $[\omega_M]$, which gives an ample polarization of type $(d_1,\ldots,d_{n-m})$
for some sequence of integers $d_1|d_2|\cdots|d_{n-m}$.
By \cite{LB}, Proposition 8.1.1,
one can then assume that $\Lambda$ is generated by
$d_1e_1,\ldots,d_{n-m}e_{n-m}, Z_1,\ldots,Z_{n-m}\in \mathbb{C}^{n-m}$, where
$e_1,\ldots,e_{n-m}$ is the standard basis for $\mathbb{C}^{n-m}$.
Furthermore, the matrix $Z$ with columns $Z_1,\ldots,Z_{n-m}$ must
satisfy $Z=Z^t$ and ${\rm Im} Z$ positive definite. Also, on the fibre
$M_y$, the K\"ahler form $\sum_{i,j}\sqrt{-1}(\Im Z)_{ij}dz^i\wedge d\bar z^j$
is cohomologous to $\omega_M|_{M_y}$.
Let
\[
g_{ij}=({\rm Im} Z)^{-1}_{ij}.
\]

Note that $Z$ depends on $y\in B$, as does $g_{ij}$. Recall that
we have the fiber coordinates $z_1,\dots,z_{n-m}$. Consider the function
\[
\eta(y,z)=\sum_{i,j} -{\frac{g_{ij}(y)}{2}}\left((z_i-\bar z_i)(z_j-\bar
z_j) \right).
\]

We would first like to show that $\mn\partial\bar\partial \eta$ is
invariant under translation by flat sections of the Gauss-Manin
connection on $B\times \mathbb{C}^{n-m}$ (this is the connection on this
bundle such that sections of $\Lambda$ are flat sections of the
bundle). It is enough to check invariance under translation by
$\lambda s$ for $s$ one of the generators of $\Lambda$, $\lambda\in
\mathbb{R}$. First, consider the composition of $\eta$ with a
general translation $z_i\mapsto z_i+\tau_i(y)$:
\begin{align*}
&\sum_{i,j} -{\frac{g_{ij}}{2}}\left(
(z_i+\tau_i-\bar z_i-\bar\tau_i)(z_j+\tau_j-\bar z_j-\bar\tau_j)\right)\\
= {} &\eta-\sum_{i,j} {\frac{g_{ij}}{2}}
\left((\tau_i-\bar\tau_i)(z_j-\bar z_j)+(\tau_j-\bar\tau_j)(z_i-\bar
z_i)
+(\tau_i-\bar\tau_i)(\tau_j-\bar\tau_j)\right)\\
={} & \eta-\sum_{i,j} g_{ij}\left((\tau_i-\bar\tau_i)(z_j-\bar
z_j)+ {\frac{1}{2}}(\tau_i-\bar\tau_i)(\tau_j-\bar\tau_j)\right),
\end{align*}
the last equality by the symmetry $g_{ij}=g_{ji}$. We now consider
two cases. If $\tau_i=\lambda\delta_{ik}$ for some $k$, so that
$\tau_i$ is real, then in fact the above formula reduces to
$\eta$, so $\eta$ is itself invariant under this
translation. Secondly, if we take $\tau_i=\lambda Z_{ik}$ for some
$k$, $\lambda\in \mathbb{R}$, we obtain
\begin{align*}
&\eta-\sum_{i,j} ({\rm Im} Z)^{-1}_{ij}\left(
2\lambda \sqrt{-1}({\rm Im}Z)_{ik}(z_j-\bar z_j)-2\lambda^2({\rm Im}Z)_{ik}({\rm Im}Z)_{jk}\right)\\
={} & \eta-\sum_j 2\delta_{jk}\lambda\sqrt{-1}(z_j-\bar z_j)
-2\lambda^2\delta_{jk}({\rm Im}Z)_{jk}.
\end{align*}
Applying $\partial\bar\partial$ kills the correction term, so
$\mn\partial\bar\partial\eta$ is invariant under this action. This means that $\ddbar\eta$ is the pullback under $p$ of a
two-form $\omega_{SF}$ on $U$
\begin{equation}\label{pullsemi}
p^*\omega_{SF}=\ddbar \eta,
\end{equation}
and $\omega_{SF}$ is semi-flat since its restriction to a fiber is $\mn\sum_{i,j}
g_{ij}(y)dz^i\wedge d\bar z^j$, a flat metric on $M_y$ cohomologous to $\omega_M|_{M_y}$. Note that the function $\eta$ on $B\times\mathbb{C}^{n-m}$ has the scaling property
\begin{equation}\label{need}
\eta(y,\lambda z)=\lambda^2\eta(y,z),
\end{equation}
for all $\lambda\in\mathbb{R}$.

We now claim that on $U$ the semi-flat form $\omega_{SF}$ is positive semidefinite.
To check this, it
is enough to check at one point on each fiber, because of the
invariance of this form. We check at the point $z_1=\cdots=z_{n-m}=0$,
where the form is $\mn\sum_{i,j}
g_{ij}dz^i\wedge d\bar z^j$, which is clearly positive semidefinite. It follows that $\omega_{SF}\geq 0$, and moreover that given any K\"ahler metric $\omega'$ on $B$ the form $\omega_{SF}+f^*\omega'$ is a semi-flat K\"ahler metric on $U$.

Suppose now that we have a holomorphic section $\sigma:B\to U$ of the
map $f$. We will denote by $T_\sigma: U\to U$ the fiberwise translation by $\sigma$ (with respect to the section $\sigma_0$). If we choose any local lift of $\sigma$ to $B\times\mathbb{C}^{n-m}$, given by $y\mapsto (y,\ti{\sigma}(y))$, then the translation $T_\sigma$ is induced by the map
$B\times\mathbb{C}^{n-m}\to B\times\mathbb{C}^{n-m}$ given by
$(y,z)\mapsto (y,z+\ti{\sigma}(y))$ (the choice of lift $\ti{\sigma}$ is irrelevant). We also have a map $T_{-\sigma}:U\to U$ given by fiberwise translation by $-\sigma$ (with respect to $\sigma_0$), which is induced by
$(y,z)\mapsto (y,z-\ti{\sigma}(y))$. The two translations are biholomorphisms of $U$ and are inverses to each other.
For later purposes, we will need the following version of the $\de\db$-Lemma, which is analogous to \cite[Lemma 4.3]{GW} (see also \cite[Proposition 3.7]{H}),
except that we work away from the singular fibers.

\begin{proposition}\label{ddbl}
Let $\omega$ be any K\"ahler metric on $U$ cohomologous to $\omega_{SF}$ in $H^2(U,\mathbb{R})$. Then there exist
a holomorphic section $\sigma:B\to U$ of $f$ and a smooth real function $\xi$ on $U$ such that
\begin{equation}\label{ddb}
T_\sigma^*\omega_{SF}-\omega=\ddbar\xi
\end{equation}
on $U$.

 If in addition $\omega$ is also semi-flat, then $\xi$ is
constant on each fiber $M_y$ and is therefore the pullback of a
function from $B$.
\end{proposition}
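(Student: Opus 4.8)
The plan is to pass to the universal cover $p:\ti U=B\times\CC^{n-m}\to U$ and turn the statement into a periodicity (descent) problem that the translation $T_\sigma$ is built to solve. Since $B$ is a ball, $\ti U$ is Stein and contractible, so the $\de\db$-lemma holds on it: the $d$-closed real $(1,1)$-form $p^*\omega$ is $d$-exact, hence equals $\ddbar\ti\xi$ for a smooth real function $\ti\xi$ (one solves $\db$ using $H^{0,1}_{\db}(\ti U)=0$); recall $p^*\omega_{SF}=\ddbar\eta$ from \eqref{pullsemi}. The task is then to find a holomorphic section $\sigma$ and a function $\xi$ on $U$ with $p^*(T_\sigma^*\omega_{SF}-\omega)=\ddbar(\xi\circ p)$ and $\xi\circ p$ invariant under the deck group $\Lambda$. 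Because $T_\sigma$ is induced by $\Sigma_\sigma(y,z)=(y,z+\ti\sigma(y))$, we have $p^*T_\sigma^*\omega_{SF}=\ddbar(\eta\circ\Sigma_\sigma)$, so the relevant potential on $\ti U$ is $F_\sigma:=\eta\circ\Sigma_\sigma-\ti\xi$.

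Next I would isolate the obstruction to descent. For each generator $v$ of $\Lambda$, with deck map $\gamma_v(y,z)=(y,z+v(y))$, the difference $c_v:=\gamma_v^*F_\sigma-F_\sigma$ is pluriharmonic, since $\ddbar F_\sigma=p^*(T_\sigma^*\omega_{SF}-\omega)$ is $\Lambda$-invariant. The assignment $v\mapsto c_v$ is a group $1$-cocycle for $\Lambda$ acting by $\gamma_v^*$ on pluriharmonic functions on $\ti U$, and one checks that $T_\sigma^*\omega_{SF}-\omega=\ddbar\xi$ is solvable on $U$ exactly when this cocycle is a coboundary $v\mapsto P-\gamma_v^*P$. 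Using the explicit translation law for $\eta$ established above, the cocycle of $\eta\circ\Sigma_\sigma$ differs from that of $\eta$ only in the part constant along the fibres, and only on the generators $Z_k$, where it is shifted by $-2\mn(\ti\sigma_k-\ov{\ti\sigma}_k)=4\,\Im\ti\sigma_k(y)$; on the real generators $d_ke_k$ it is unchanged.

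The heart of the matter is then to trivialize the cocycle by choosing $\sigma$. The fibre-linear part of $c_v$ is independent of $\sigma$, but it is forced to vanish: it is governed by the cohomology class of the restriction to a fibre, and since $\omega$ is cohomologous to $\omega_{SF}$ on $U$ we have $[\omega|_{M_y}]=[\omega_{SF}|_{M_y}]=[\omega_M|_{M_y}]$, so the fibre-linear automorphy parts of $\ti\xi$ and of $\eta$ agree and cancel. Thus $c_v$ is constant along the fibres. Seeking $P=\Re\sum_k h_k(y)z_k$ with $\gamma_v^*P-P=\Re\sum_k h_k(y)v_k(y)$, the equations at the generators $d_le_l$ determine $\Re h_l$, after which the equations at the $Z_l$ form an otherwise overdetermined system; this is precisely where $\sigma$ enters, the $n-m$ free pluriharmonic functions $\Im\ti\sigma_l(y)$ supplying exactly the freedom needed to solve them, with $\Im Z>0$ ensuring nondegeneracy. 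Since $B$ is simply connected, any prescribed harmonic $\Im\ti\sigma_l$ is realized by a holomorphic $\ti\sigma_l$, yielding a genuine holomorphic section $\sigma$. I expect this step --- producing a \emph{holomorphic} (not merely smooth) section that kills the cocycle --- to be the main obstacle, and it relies on the explicit form of $\eta$ together with the contractibility of $B$. Descending $F_\sigma$ (corrected by the coboundary) then gives the desired $\xi$.

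Finally, for the second assertion, suppose $\omega$ is also semi-flat. Then $T_\sigma^*\omega_{SF}$ and $\omega$ both restrict to flat metrics on each fibre $M_y$ in the class $[\omega_M|_{M_y}]$; as two cohomologous translation-invariant metrics on a torus coincide, $(\ddbar\xi)|_{M_y}=\ddbar(\xi|_{M_y})=0$. Hence $\xi|_{M_y}$ is pluriharmonic, so harmonic for the flat metric, hence constant on the compact torus $M_y$. Therefore $\xi$ is constant along the fibres and is the pullback of a smooth function on $B$, as claimed.
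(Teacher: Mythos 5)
Your overall strategy --- pass to the universal cover, encode the problem as a $\Lambda$-cocycle with values in pluriharmonic functions, and use the section $\sigma$ to kill the cocycle --- is a legitimate alternative route (closer in spirit to Gross--Wilson and Hein than to this paper's argument), and several of your steps are correct: the existence of $\ti{\xi}$ with $p^*\omega=\ddbar\ti{\xi}$, the computation that replacing $\eta$ by $\eta\circ\Sigma_\sigma$ shifts the automorphy on the generators $Z_k$ by $4\Im\ti{\sigma}_k(y)$, the solution of the resulting affine system on simply connected $B$, and the concluding semi-flat argument (which the paper does not even spell out). But there is a genuine gap at exactly the step you flag as the heart of the matter: the claim that $c_v$ is constant along the fibres. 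The cocycle $c_v=\gamma_v^*F_\sigma-F_\sigma$ depends on the choice of $\ti{\xi}$, which is ambiguous up to an arbitrary pluriharmonic function on $B\times\CC^{n-m}$; for a general choice, $c_v$ restricted to a fibre is an arbitrary pluriharmonic function, by no means affine. For instance, replacing $\ti{\xi}$ by $\ti{\xi}+\Re\left(a(y)z_1^3\right)$ with $a$ holomorphic leaves $p^*\omega=\ddbar\ti{\xi}$ unchanged but adds to $c_v|_{M_y}$ a term quadratic in $z_1$. Your cohomological argument ($[\omega|_{M_y}]=[\omega_{SF}|_{M_y}]$ forces the fibre-linear parts to cancel) only identifies the linear part of an \emph{affine representative} of the fibrewise cocycle class; it does not show that the actual cocycle has such a representative compatible over $B$. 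One can indeed reduce the cocycle to affine form on each fixed torus $M_y$ (harmonic, or Appell--Humbert, representatives), but the correction achieving this is only \emph{fibrewise} pluriharmonic; to use it as a coboundary in your descent it must be pluriharmonic in all variables $(y,z)$ jointly --- otherwise $F_\sigma+P$ still descends, but its $\ddbar$ is no longer $p^*(T_\sigma^*\omega_{SF}-\omega)$ --- and it must moreover depend on $y$ well enough to produce a \emph{holomorphic} section. Nothing in your argument provides this joint control of the mixed $y$--$z$ directions.

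That joint control is precisely what the paper's proof supplies, by a different mechanism: working directly on $U$, it writes $\omega_{SF}-\omega=\de\zeta^{0,1}+\db\,\ov{\zeta^{0,1}}$, uses the Leray spectral sequence ($H^{0,1}(U)\cong H^0(B,R^1f_*\mathcal{O}_U)$, valid because $B$ is a ball) to reduce $\db$-exactness of a $(0,1)$-form on $U$ to the vanishing of all its fibrewise Dolbeault classes, extracts the fibrewise class of $\zeta^{0,1}$ by averaging over the $T^{2n-2m}$-action, and then proves --- by averaging the equation $\db\zeta^{0,1}=0$ --- that the resulting coefficient functions $\sigma_j(y)$ are holomorphic, which is what makes $\sigma$ a holomorphic section. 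Some argument of this global nature has to replace your fibre-constancy claim before the rest of your plan can go through; as written, the proposal assumes the hardest point rather than proving it.
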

\begin{proof}
By assumption there is a 1-form $\zeta$ on $U$ such that $$\omega_{SF}-\omega=d\zeta=\partial \zeta^{0,1}
+\overline{\partial} \zeta^{1,0}, \ \ \ \overline{\partial}
\zeta^{0,1}=0,$$ where $\zeta= \zeta^{0,1} + \zeta^{1,0} $ and
$\zeta^{0,1}= \overline{\zeta^{1,0}} $.

 We claim that $(0,1)$-forms
\begin{equation}\label{ddb+1}
\theta_{j}=\mn\ \overline{\partial}\left(\sum_{i=1}^{n-m}g_{ij}(y)(z_{i}-\bar{z}_{i})\right), \ \ \ \ \ \ j=1,\cdots,n-m,
\end{equation}
 are invariant under
translations by flat sections of the Gauss-Manin connection on $B\times\mathbb{C}^{n-m}$, and thus descend to $(0,1)$-forms on $U$.
It is enough to check invariance under translation by $\lambda s$ where $s$
is a generator of $\Lambda$ and $\lambda\in \mathbb{R}$.
First, consider a general translation $z_{i}\mapsto z_{i}+ \tau_{i}(y)$. If
$\tau_i=\lambda\delta_{ik}$ for some $k$, so that $\tau_i$ is real,
then $\theta_{j}$ are invariant. If $\tau_i=\lambda Z_{ik}$ for some
$k$, $\lambda\in \mathbb{R}$, we obtain
\[\begin{split}
\sum_{i=1}^{n-m}g_{ij}(z_{i}&+\lambda Z_{ik}-\bar{z}_{i}-\lambda \bar{Z}_{ik})
 = \sum_{i=1}^{n-m}g_{ij}(z_{i}-\bar{z}_{i})\\
&+ 2\mn\sum_{i=1}^{n-m}
({\rm Im} Z)^{-1}_{ij}
\lambda ({\rm Im}Z)_{ik}=  \sum_{i=1}^{n-m}g_{ij}(z_{i}-\bar{z}_{i})+
 2\lambda \sqrt{-1}\delta_{jk}.
\end{split}\]
Applying $\bar\partial$ kills the correction term, so
 $\theta_{j}$ are invariant, and therefore they define $(0,1)$-forms on
$U$. Since, for any $y\in B$,
\begin{equation}\label{dddd}
p^*\left(\theta_{j}|_{M_y}\right)=-\mn\sum_{i=1}^{n-m}g_{ij}(y)d\bar{z}_{i},
\end{equation}
is fiberwise constant and $g_{ij}$ is non-degenerate, we have that
$[\theta_{i}|_{M_y}]$, $i=1, \cdots, n-m$ is a basis of
$H^{0,1}(M_y)$.

We claim that  there are holomorphic functions $ \sigma_{i}:B\rightarrow
\mathbb{C}$ such that
\begin{equation}\label{ddb+2}
\zeta^{0,1}=\sum_{i=1}^{n-m}\sigma_{i}\theta_{i}+\overline{\partial}h,
\end{equation}
for a complex-valued function $h$ on $U$.
To prove this, note that
$H^{0,1}(U)=H^{1}(U,\mathcal{O}_{U})$ which by the Leray spectral sequence for $f$ is isomorphic to $H^{0}(B,R^{1}f_{*}\mathcal{O}_{U})$
since $H^{k} (B, f_{*}\mathcal{O}_{U})=H^{k} (B,
\mathcal{O}_{B})=0$ for $k\geq 1$. It follows that a $\db$-closed $(0,1)$-form on $U$ represents the zero class if and only if its restriction to $M_y$ represents the zero class in $H^{0,1}(M_y)$ for all $y\in B$.
Consider now the $(0,1)$-forms $d\ov{y}^i$, $1\leq i\leq m$, on $B$ and denote their pullbacks to $U$ by the same symbol. Then at each point of $U$ the forms
$\{\theta_j\}, 1\leq j\leq n-m$ together with $\{d\ov{y}^i\}, 1\leq i\leq m$, form a basis of $(0,1)$-forms. We can then write
$$\zeta^{0,1}=\sum_{j=1}^{n-m} w_j \theta_j + \sum_{i=1}^m h_i d\ov{y}^i,$$
where $w_j, h_i$ are smooth complex functions on $U$. If we now
restrict to a fiber $M_y$ we get $\zeta^{0,1}|_{M_y} =
\sum_{j=1}^{n-m} w_j \theta_j|_{M_y},$ and the functions $w_j$
restricted to $M_y$ can be thought of as functions on
$\mathbb{C}^{n-m}$ which are periodic with period $\Lambda_y$. There
is a holomorphic $T^{2n-2m}$-action on $U$ which is induced by the
action of $\mathbb{R}^{2n-2m}$ on $B\times\mathbb{C}^{n-m}$ given by
$x\cdot (y,z)=(y,z+\sum_j x_j\tau_j(y))$, where $\tau_j(y)$ is a
basis for the lattice $\Lambda_y$ (the choice of which is
irrelevant). If $\alpha$ is a function or differential form on $U$
or $M_y$, we will denote by $\ti{\alpha}$ its average with respect
to the $T^{2n-2m}$-action. In particular, if $\alpha$ is a function
on $U$ then $\ti{\alpha}$ is the pullback of a function from $B$. We
now call $\sigma_j=\ti{w}_j$, $1\leq j\leq n-m$, which are functions
of $y\in B$ only. We clearly have that $\ti{\theta}_j=\theta_j$ and
$\widetilde{d\ov{y}^i}=d\ov{y}^i$, so
$$\widetilde{\zeta^{0,1}|_{M_y}}=\sum_{j=1}^{n-m} \sigma_j(y) \theta_j|_{M_y}.$$
Now the $T^{2n-2m}$-action on $M_y$ is generated by holomorphic vector fields and therefore acts trivially on the Dolbeault cohomology $H^{0,1}(M_y)$, which implies that
$$\left[\zeta^{0,1}|_{M_y}\right]=\left[\widetilde{\zeta^{0,1}|_{M_y}}\right]=
\sum_{j=1}^{n-m}\sigma_j(y)\left[\theta_j|_{M_y}\right],$$
in $H^{0,1}(M_y)$ for all $y\in B$. If we show that the $\sigma_j(y)$ are holomorphic, then the $(0,1)$-form $\zeta^{0,1}-\sum_j \sigma_j(y)\theta_j$ on $U$ would be $\db$-closed and cohomologous to zero in $H^{0,1}(U)$, thus proving \eqref{ddb+2}.

Call now $V_j$, $1\leq j\leq n-m$ and $W_i$, $1\leq i\leq m$ the
$T^{2n-2m}$-invariant $(0,1)$-type vector fields on $U$ which are the dual basis to $\theta_j, d\ov{y}^i$.
We have that $V_j=\mn \sum_{k=1}^{n-m}g^{jk} \frac{\de}{\de \ov{z}_k}$, where $g^{jk}$ is the inverse matrix of $g_{jk}$, and the vector fields $\frac{\de}{\de \ov{z}_k}$ are well-defined on $U$. We will not need the explicit formula for $W_i$, but just the fact that if a function $f$ on $U$ is the pullback of a function on $B$ then $W_i(f)=\frac{\de f}{\de \ov{y}_i}$.

To see why $\sigma_j(y)$ is holomorphic, compute
\[\begin{split}
0=\db\zeta^{0,1}=&\sum_{i,j}W_i(w_j) d\ov{y}^i\wedge\theta_j+\sum_{i,j} V_i(w_j)
\theta_i\wedge\theta_j \\
&+\sum_{i,j}W_j(h_i)
d\ov{y}^j\wedge d\ov{y}^i+\sum_{i,j} V_j(h_i) \theta_j\wedge
d\ov{y}^i.
\end{split}\]
Since each $V_j$ is a linear combination of $\frac{\de}{\de \ov{z}_k}$,
we have that the functions $V_i(w_j)$ and $V_j(h_i)$ have
average zero on each fiber. Taking
the average then gives
$$0=\db \widetilde{\zeta^{0,1}}=\sum_{i,j} \frac{\de \sigma_j}{\de \ov{y}_i}
d\ov{y}^i\wedge\theta_j+\sum_{i,j}\frac{\de \tilde{h}_i}{\de \ov{y}_j} d\ov{y}^j\wedge d\ov{y}^i.$$
Since the forms $d\ov{y}^i\wedge\theta_j$ and $d\ov{y}^j\wedge d\ov{y}^i$ are linearly independent at every point, this implies that $\sigma_j(y)$ are indeed holomorphic.

Let now $T_{\sigma}: U\to U$ be the translation
induced by the section  $\sigma=(p\circ\sigma_{1}, \cdots,
p\circ\sigma_{n-m})$, where $p: B\times\mathbb{C}^{n-m}\rightarrow
U$ is the quotient map. Since
\[\begin{split} \sum_{i,j} &-{\frac{g_{ij}}{2}}\left(
(z_i+\sigma_i-\bar z_i-\bar\sigma_i)(z_j+\sigma_j-\bar z_j-\bar\sigma_j)\right)\\
= {} &\eta-\sum_{i,j} {\frac{g_{ij}}{2}}
\left((\sigma_i-\bar\sigma_i)(z_j-\bar
z_j)+(\sigma_j-\bar\sigma_j)(z_i-\bar z_i)
+(\sigma_i-\bar\sigma_i)(\sigma_j-\bar\sigma_j)\right)\\
={} & \eta-\sum_{i,j} g_{ij}\left((\sigma_i-\bar\sigma_i)(z_j-\bar
z_j)+
{\frac{1}{2}}(\sigma_i-\bar\sigma_i)(\sigma_j-\bar\sigma_j)\right),
\end{split}\] we have
\[\begin{split}
p^*T_{\sigma}^{*}\omega_{SF}-p^*\omega_{SF}& =  -\sqrt{-1}\partial\overline{\partial}\sum_{i,j} g_{ij}(\sigma_i-\bar\sigma_i)(z_j-\bar
z_j)+\sqrt{-1}\partial\overline{\partial}\Phi(y) \\
 &= p^*\left(-\partial\sum_{i}\sigma_{i}\theta_{i}-
\overline{\partial}\sum_{i}\ov{\sigma_{i}\theta_{i}}\right)
+\sqrt{-1}\partial\overline{\partial}\Phi(y),\end{split}\]
where $\Phi(y)=-\sum_{i,j}\frac{g_{ij}}{2}(\sigma_i-\bar\sigma_i)(\sigma_j-\bar\sigma_j)$ is a real function of $y$ only.
We have just proved that
$$\omega_{SF}-\omega=\partial \zeta^{0,1}
+\overline{\partial}\ \ov{\zeta^{0,1}}=\de\sum_i \sigma_i \theta_i+\de\db h
+\db \sum_i \ov{\sigma_i \theta_i} +\db\de \ov{h}.$$
Thus
$$p^*T_{\sigma}^{*}\omega_{SF}-p^*\omega=p^*\sqrt{-1}\partial\overline{\partial}(2\mathrm{Im}h+\Phi), $$
which proves \eqref{ddb} with $\xi=2\mathrm{Im}h+\Phi$.
\end{proof}

\section{Estimates and smooth convergence}\label{sect3} 

In this section we prove a priori estimates of all orders for the
Ricci--flat metrics $\ti{\omega}_t$ which are uniform on compact
sets of $M\backslash S$, and then use these to prove Theorem
\ref{main1}. These estimates improve the results in \cite{To1}, and
use crucially the assumptions that $M$ is projective and that the
smooth fibers $M_y$ are tori. 

From now on we fix a small ball $B\subset N\backslash f(S)$, and as before
we call $U=f^{-1}(B)$ and we have the holomorphic covering map $p:B\times\mathbb{C}^{n-m}\to U$, with $f\circ p(y,z)=y$ where $(y,z)=(y_1,\dots,y_m,z_1,\dots,z_{n-m})$
the standard coordinates on $B\times\mathbb{C}^{n-m}$.

\begin{lemma}\label{c2es} There is a constant $C$ such that on $U$ the Ricci--flat metrics $\ti{\omega}_t$ satisfy
\begin{equation}\label{c2}
C^{-1}(\omega_0+t\omega_M)\leq \ti{\omega}_t\leq C(\omega_0+t\omega_M),
\end{equation}
for all small $t>0$.
\end{lemma}
\begin{proof}
This estimate is contained in the second-named author's work \cite{To1}, although it is not explicitly stated there.
To see this, start from \cite[(3.24)]{To1}, which gives a constant $C$ so that on $U$ we have
$$C^{-1}(t\omega_M)\leq \ti{\omega}_t.$$
Then use \cite[Lemma 3.1]{To1} to get
$$C^{-1}\omega_0\leq \ti{\omega}_t,$$
and so adding these two inequalities we get
$$C^{-1}(\omega_0+t\omega_M)\leq \ti{\omega}_t,$$
or in other words $\tr{\ti{\omega}_t}{\omega_t}\leq C$ on $U$, where
$\omega_t=\omega_0+t\omega_M$ as before.
To get the reverse inequality, we note that on $U$ we have
$$\tr{\omega_t}{\ti{\omega}_t}\leq (\tr{\ti{\omega}_t}{\omega_t})^{n-1} \frac{\ti{\omega}_t^n}{\omega_t^n}
\leq C\frac{\ti{\omega}_t^n}{\omega_t^n}\leq C,$$
where the last inequality follows from \cite[(3.23)]{To1}. We thus get the reverse inequality
$$\ti{\omega}_t\leq C(\omega_0+t\omega_M),$$
thus proving \eqref{c2}.
\end{proof}

We now let
$\lambda_t:B\times\mathbb{C}^{n-m}\to B\times\mathbb{C}^{n-m}$ be the dilation
$$\lambda_t(y,z)=\left(y,\frac{z}{\sqrt{t}}\right),$$
which takes the lattice $\sqrt{t}\Lambda_y$ to $\Lambda_y$.
If we pull back the K\"ahler potential $\vp_t$ on $U$ via $p$ we get a function
$\vp_t\circ p$ on $B\times\mathbb{C}^{n-m}$ which is periodic in
$z$ with period $\Lambda_y$, i.e. $\vp_t\circ p (y,z+\ell)=\vp_t\circ p (y,z)$
for all $\ell\in\Lambda_y$. The function $\vp_t\circ p\circ \lambda_t$
is then periodic in $z$ with period $\sqrt{t}\Lambda_y$.
Note that since $\omega_0$ is the pullback of a metric from $N\backslash f(S)$, we have $\lambda_t^*p^*\omega_0=p^*\omega_0.$

Recall now that we have a positive semidefinite semi-flat form $\omega_{SF}$ on $U$, and that $\omega_0+\omega_{SF}$ is then a semi-flat K\"ahler metric on $U$. Since $U$ is diffeomorphic to a product $B\times M_y$, it follows that $\omega_{SF}$ and $\omega_M$ are cohomologous on $U$. We now apply Proposition \ref{ddbl} and get a holomorphic section $\sigma:B\to U$ and a real function $\xi$ on $U$ such that
\begin{equation}\label{dd}
T_\sigma^*\omega_{SF}-\omega_M=\ddbar\xi
\end{equation}
on $U$, where $T_\sigma$ is the fiberwise translation by $\sigma$.

\begin{lemma}
There is a constant $C$ such that on the whole of $B\times\mathbb{C}^{n-m}$ we have
\begin{equation}\label{c2b}
C^{-1}p^*(\omega_0+\omega_{SF})\leq \lambda_t^*p^*T_{-\sigma}^*\ti{\omega}_t\leq Cp^*(\omega_0+\omega_{SF}),
\end{equation}
for all small $t>0$.
\end{lemma}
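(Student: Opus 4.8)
The plan is to transport the $C^0$-estimate of Lemma \ref{c2es} from $U$ to $B\times\mathbb{C}^{n-m}$ through the maps $T_{-\sigma}$, $p$ and $\lambda_t$, and then to analyze the result directionally, separating the base directions $y$ from the fibre directions $z$. Since $T_{-\sigma}$ is a biholomorphism of $U$, $p$ is a holomorphic covering, and $\lambda_t$ is a biholomorphism of $B\times\mathbb{C}^{n-m}$, pullback by each of them preserves inequalities between real $(1,1)$-forms. Applying $\lambda_t^*p^*T_{-\sigma}^*$ to \eqref{c2} therefore reduces the lemma to proving that
\[
g_t:=\lambda_t^*p^*T_{-\sigma}^*(\omega_0+t\omega_M)
\]
is uniformly comparable to $g_\infty:=p^*(\omega_0+\omega_{SF})$ for all small $t>0$.

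First I would compute $g_t$. Because $f\circ T_{-\sigma}=f$ we have $T_{-\sigma}^*\omega_0=\omega_0$, and together with $\lambda_t^*p^*\omega_0=p^*\omega_0$ this yields $\lambda_t^*p^*T_{-\sigma}^*\omega_0=p^*\omega_0$. For the fibre part I would use \eqref{dd} in the form $T_{-\sigma}^*\omega_M=\omega_{SF}-\ddbar(\xi\circ T_{-\sigma})$, together with the scaling identity $\eta\circ\lambda_t=t^{-1}\eta$ coming from \eqref{need}, to obtain $t\lambda_t^*p^*T_{-\sigma}^*\omega_M=p^*\omega_{SF}-t\ddbar\psi_t$, where $\psi_t=(\xi\circ T_{-\sigma})\circ p\circ\lambda_t$. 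Hence
\[
g_t=g_\infty-t\ddbar\psi_t=p^*\omega_0+t\lambda_t^*\beta,\qquad \beta:=p^*T_{-\sigma}^*\omega_M .
\]
Here $\beta$ is a fixed K\"ahler metric, smooth and $\Lambda_y$-periodic in $z$, so on the fixed ball $B$ (with compact closure in $N\backslash f(S)$) its coefficients are uniformly positive definite and bounded; the same holds for the semi-flat K\"ahler metric $g_\infty$, which is therefore uniformly comparable to a fixed flat reference form $g_{\mathrm{euc}}=\mn\left(\sum_i dy_i\wedge d\bar y_i+\sum_k dz_k\wedge d\bar z_k\right)$.

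The heart of the argument is to estimate $g_t=p^*\omega_0+t\lambda_t^*\beta$ blockwise in $y$ and $z$. Using $\lambda_t^*dz_k=t^{-1/2}dz_k$ and $\lambda_t^*dy_i=dy_i$, the dilation acts differently on the three blocks of $t\lambda_t^*\beta$: the fibre–fibre block equals $\beta^{zz}(y,z/\sqrt t)$, which is the genuine fibre metric of the K\"ahler metric $T_{-\sigma}^*\omega_M$ and is hence uniformly positive definite and bounded; the base–base block is $O(t)$ and positive semidefinite; and the mixed block is $O(\sqrt t)$. Since $p^*\omega_0$ contributes only to the base–base block, where it is uniformly positive definite, the form $g_t$ has both diagonal blocks bounded above and uniformly bounded below, and off-diagonal block tending to zero. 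An elementary Cauchy–Schwarz (Schur complement) estimate then shows that a Hermitian form with uniformly positive definite diagonal blocks and vanishingly small off-diagonal block is, for $t$ small, comparable to $g_{\mathrm{euc}}$ with $t$-independent constants. Combining this with $g_\infty\sim g_{\mathrm{euc}}$ gives $g_t\sim g_\infty$ and hence \eqref{c2b}.

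The main obstacle is the lower bound. One is tempted to read it off from $g_t=g_\infty-t\ddbar\psi_t$, but this fails: the fibre–fibre part of $t\ddbar\psi_t$ is only $O(1)$, not $o(1)$, so the naive triangle inequality does not close and may leave a useless (possibly negative) lower bound. The resolution is precisely the blockwise analysis above: after dilation the base–fibre cross terms are $O(\sqrt t)$ and thus negligible, the fibre block is the \emph{positive} fibre metric $\beta^{zz}$ rather than the flat $g_{ij}$ corrected by an $O(1)$ term, and the base block retains the positivity of $\omega_0$. It is the positive definiteness of the K\"ahler metric $T_{-\sigma}^*\omega_M$, via the uniform lower bound on $\beta^{zz}$ from periodicity and compactness, that ultimately salvages the lower bound.
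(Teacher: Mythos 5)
Your proof is correct, but its key step runs along a genuinely different line from the paper's. The paper never analyzes the rescaled metric block by block: it first proves the $t$-uniform equivalence $C^{-1}(\omega_0+t\omega_{SF})\leq\omega_0+t\omega_M\leq C(\omega_0+t\omega_{SF})$ upstairs on $U$ (immediate from the uniform equivalence of $\omega_M$ with the semi-flat K\"ahler metric $\omega_0+\omega_{SF}$ after slightly shrinking $U$), combines this with Lemma \ref{c2es} and with the equivalence $T_{-\sigma}^*\omega_M\sim\omega_M$, and only then pulls back by $p\circ\lambda_t$; the pullback of the comparison metric is then computed \emph{exactly} via the scaling identity $t\lambda_t^*p^*\omega_{SF}=p^*\omega_{SF}$ of \eqref{need1}, which rests on the quadratic homogeneity \eqref{need} of the potential $\eta$. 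You instead pull back \eqref{c2} first and do the comparison downstairs on $B\times\mathbb{C}^{n-m}$, trading the exact invariance of the semi-flat form for a blockwise scaling analysis of $t\lambda_t^*\beta$ with $\beta=p^*T_{-\sigma}^*\omega_M$, closed up by a Schur-complement/Cauchy--Schwarz estimate; the only inputs are periodicity of $\beta$ in $z$, compactness of $\overline{B}$, and positivity of $\omega_0$ in the base directions. The paper's route is shorter and explains why $\omega_{SF}$ was built from a homogeneous potential in the first place; yours is more elementary and more robust, since the bound on $g_t$ never uses the special structure of $\omega_{SF}$ (which enters only in identifying the comparison metric $p^*(\omega_0+\omega_{SF})$, where mere periodicity suffices), and it makes the directional mechanism explicit: fibre directions retain the fibre metric of $\omega_M$, base directions retain $\omega_0$, and cross terms are $O(\sqrt{t})$. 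Your observation that the decomposition $g_t=g_\infty-t\ddbar\psi_t$ cannot be exploited naively is also accurate, and the paper implicitly sidesteps that trap by never invoking \eqref{dd} inside this lemma at all.
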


\begin{proof}
First of all notice that after replacing $U$
with a slightly smaller open
set, the semi-flat metric $\omega_0+\omega_{SF}$ is uniformly equivalent to $\omega_M$, which implies that
\begin{equation}\label{uno}
C^{-1}(\omega_0+t\omega_{SF})\leq \omega_0+t\omega_M\leq C(\omega_0+t\omega_{SF}),
\end{equation}
for all small $t>0$.
Thanks to Lemma \ref{c2es} on $U$ we have that
$$C^{-1}(\omega_0+tT_{-\sigma}^*\omega_M)\leq T_{-\sigma}^*\ti{\omega}_t\leq C(\omega_0+tT_{-\sigma}^*\omega_M),$$
and since $T_{-\sigma}^*\omega_M$ is uniformly equivalent to $\omega_M$ we also have that
\begin{equation}\label{due}
C^{-1}(\omega_0+t\omega_M)\leq T_{-\sigma}^*\ti{\omega}_t\leq C(\omega_0+t\omega_M),
\end{equation}
and combining \eqref{uno} and \eqref{due} we get
\begin{equation}\label{c2m}
C^{-1}(\omega_0+t\omega_{SF})\leq T_{-\sigma}^*\ti{\omega}_t\leq C(\omega_0+t\omega_{SF}),
\end{equation}
on $U$.
If we pull back \eqref{c2m} by $p\circ \lambda_t$ we get
\begin{equation}\label{equiv2}
C^{-1}(p^*\omega_0+t\lambda_t^*p^*\omega_{SF})\leq
\lambda_t^*p^*T_{-\sigma}^*\ti{\omega}_t\leq C(p^*\omega_0+t\lambda_t^*p^*\omega_{SF}),
\end{equation}
on all of $B\times\mathbb{C}^{n-m}$.
We claim that on the whole of $B\times\mathbb{C}^{n-m}$ we have that
\begin{equation}\label{need1}
t\lambda_t^*p^*\omega_{SF}=
p^*\omega_{SF}.
\end{equation}
In fact, the construction of $\omega_{SF}$ in section \ref{sect2} gives that
$p^*\omega_{SF}=\ddbar\eta,$
for a function $\eta$ on $B\times\mathbb{C}^{n-m}$
that satisfies
\begin{equation}\label{need2}
\eta\circ\lambda_t (y,z)=\eta\left(y,\frac{z}{\sqrt{t}}\right)=\frac{1}{t}\eta(y,z),
\end{equation}
for all $(y,z)$ in $B\times\mathbb{C}^{n-m}$ and any $t>0$.
It follows then that
\begin{equation}\label{need3}
t\lambda_t^*p^*\omega_{SF}=t\lambda_t^*\ddbar\eta=
t\ddbar(\eta\circ\lambda_t)=\ddbar\eta=p^*\omega_{SF},
\end{equation}
as claimed.
Combining \eqref{equiv2} and \eqref{need1} we get the bound \eqref{c2b}.
\end{proof}

\begin{proposition}\label{estimates}
Given any compact set $K$ in $B\times\mathbb{C}^{n-m}$
and any $k\geq 0$ there exists a constant $C$
independent of $t>0$ such that
\begin{equation}\label{estim}
\|\lambda_t^*p^*T_{-\sigma}^*\ti{\omega}_t \|_{C^{k}(K,\delta)}\leq C,
\end{equation}
where $\delta$ is the Euclidean metric
on $B\times\mathbb{C}^{n-m}$.
\end{proposition}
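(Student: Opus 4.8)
The plan is to show that the rescaled metric $\Phi_t:=\lambda_t^*p^*T_{-\sigma}^*\ti{\omega}_t$ is itself a Ricci--flat K\"ahler metric on $B\times\mathbb{C}^{n-m}$ whose complex Monge--Amp\`ere measure becomes, after the dilation, uniformly smooth in $t$, and then to invoke the standard local a priori estimates for such equations. First I would track how \eqref{ma} transforms. Since $p$, $T_{-\sigma}$ and $\lambda_t$ are all holomorphic, pulling \eqref{ma} back gives $\Phi_t^n=c_t t^{n-m}\lambda_t^*p^*T_{-\sigma}^*(\omega_M^n)$. Writing $p^*T_{-\sigma}^*(\omega_M^n)=b\,dV_\delta$ for a smooth positive density $b=b(y,z)$ on $B\times\mathbb{C}^{n-m}$ (here $dV_\delta$ is the Euclidean volume form), and using that $\lambda_t$ scales the $n-m$ fiber directions so that $\lambda_t^*dV_\delta=t^{-(n-m)}dV_\delta$, the factor $t^{n-m}$ cancels exactly and we are left with
\begin{equation*}
\Phi_t^n=c_t\, b\!\left(y,\tfrac{z}{\sqrt t}\right)dV_\delta.
\end{equation*}
By \eqref{c2b} together with the fact that $p^*(\omega_0+\omega_{SF})$ is uniformly equivalent to $\delta$ on compact subsets (its fiberwise part is the $z$--independent flat metric $\mn\sum g_{ij}(y)dz^i\wedge d\bar z^j$), the metric $\Phi_t$ is uniformly equivalent to $\delta$ on $K$, so the equation above is uniformly elliptic.

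The crucial point, which I expect to be the main obstacle, is that the density $b$ is in fact \emph{independent of the fiber variable} $z$, so that the apparent oscillation of $b(y,z/\sqrt t)$ on scale $\sqrt t$ disappears. To see this, note that because $\omega_M$ is Ricci--flat and $p$, $T_{-\sigma}$ are local biholomorphisms, $\Ric(p^*T_{-\sigma}^*\omega_M)=0$ on $B\times\mathbb{C}^{n-m}$; in the global coordinates $(y,z)$ this says precisely that $\log b$ is pluriharmonic. Fixing $y$ and viewing $\log b(y,\cdot)$ as a function of $z\in\mathbb{C}^{n-m}$, it is pluriharmonic on all of $\mathbb{C}^{n-m}$ and, since $b$ is the pullback under the universal covering $p$ of a function on $U$, it is $\Lambda_y$--periodic and hence bounded. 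By Liouville's theorem a bounded harmonic function on $\mathbb{R}^{2(n-m)}$ is constant, so $b=b(y)$ depends only on $y$. Consequently $\Phi_t^n=c_t\,b(y)\,dV_\delta$, whose $C^k$ norms on $K$ are bounded independently of $t$ (as $c_t\to c_0>0$ and $b$ is a fixed smooth function).

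It then remains to run the standard local higher order theory for the complex Monge--Amp\`ere equation. On a coordinate ball contained in $K$ I would write $\Phi_t=\ddbar\psi_t$ for a local K\"ahler potential (normalizing the pluriharmonic ambiguity); the uniform two-sided bound on $\Phi_t$ gives uniform ellipticity and a uniform bound on the complex Hessian of $\psi_t$, while the right hand side in $\det(\psi_t)_{i\bar j}=c_t b(y)$ is uniformly smooth and uniformly positive. The Evans--Krylov theorem then yields a uniform interior $C^{2,\alpha}$ estimate for $\psi_t$, and differentiating $\log\det(\psi_t)_{i\bar j}=\log(c_t b(y))$ and applying the Schauder estimates repeatedly bootstraps this to uniform interior $C^{k}$ estimates for $\psi_t$, for every $k$. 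Since $\Phi_t=\ddbar\psi_t$, these imply \eqref{estim}. The only genuinely nonstandard ingredient is the Liouville argument of the previous paragraph; once the right hand side is known to be $z$--independent, the remainder is routine elliptic bootstrapping.
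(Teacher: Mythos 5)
Your core argument is the same as the paper's: the cancellation of $t^{n-m}$ against $\lambda_t^*dV_\delta=t^{-(n-m)}dV_\delta$, the observation that the density of $p^*T_{-\sigma}^*(\omega_M^n)$ against the Euclidean volume form is independent of $z$, and then uniform ellipticity from \eqref{c2b} followed by Evans--Krylov and Schauder. Your Liouville argument for the $z$-independence (pluriharmonic, $\Lambda_y$-periodic, hence bounded, hence constant on each $\{y\}\times\mathbb{C}^{n-m}$) is a correct variant of the paper's argument, which restricts the pluriharmonic function to the compact fibers $M_y$ and uses the maximum principle; the two are interchangeable. One small point you use implicitly: the deck transformations $(y,z)\mapsto(y,z+\sum_i n_iv_i(y))$ are \emph{not} Euclidean isometries, so the periodicity of $b$ requires knowing that they preserve $dV_\delta$; this holds because they preserve the holomorphic form $dy^1\wedge\cdots\wedge dz^{n-m}$, which is exactly the $(n,0)$-form $\Omega$ the paper introduces to make this step explicit.

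The one genuine gap is the phrase ``normalizing the pluriharmonic ambiguity.'' Your Schauder bootstrap is applied to $v=\partial_k\psi_t$ and therefore needs a uniform bound on $\|\partial_k\psi_t\|_{C^0}$ (and Evans--Krylov, in the formulation you invoke, wants the solution under control, not only its complex Hessian), so you need a local potential that is itself uniformly bounded in $t$, and this does not come for free. Note that you cannot take the potential suggested by \eqref{ma}: the rescaled background $p^*\omega_0+t\lambda_t^*p^*T_{-\sigma}^*\omega_M$ has fiber--fiber coefficients of the form $A_{i\bar j}(y,z/\sqrt{t})$, whose $z$-derivatives blow up like $t^{-1/2}$, so the pulled-back $\vp_t$ is a bounded potential only relative to a background that is \emph{not} uniformly smooth. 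This is precisely the problem the paper's semi-flat machinery solves: by Proposition \ref{ddbl} and the scaling identity $t\lambda_t^*p^*\omega_{SF}=p^*\omega_{SF}$ of \eqref{need1}, the paper writes $\lambda_t^*p^*T_{-\sigma}^*\ti{\omega}_t=p^*(\omega_0+\omega_{SF})+\ddbar u_t$ as in \eqref{ut}, with a \emph{fixed} smooth background and with $u_t$ uniformly bounded in $C^0$ thanks to the Demailly--Pali/Eyssidieux--Guedj--Zeriahi estimate on $\vp_t$ and the fixed function $\xi$; this $u_t$ is what is fed into Evans--Krylov and Schauder. Your purely local alternative can be repaired: since $\Phi_t$ is uniformly $C^0$-bounded by \eqref{c2b}, a local potential with uniform $C^0$ bounds can be produced on a ball by the Poincar\'e and Bochner--Martinelli--Koppelman homotopy operators, and elliptic estimates then upgrade this to uniform $C^{1,\alpha}$ bounds. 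But some such construction (or the paper's global potential) must be supplied; as written, the normalization is the missing step.
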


\begin{proof}
We pull back \eqref{ma} via $T_{-\sigma}\circ p\circ\lambda_t$ and get
\[\begin{split}
(\lambda_t^*p^*T_{-\sigma}^*\ti{\omega}_t)^n(y,z)&=c_t t^{n-m}(\lambda_t^*p^* T_{-\sigma}^*\omega_M)^n(y,z)\\
&=c_t (p^*T_{-\sigma}^*\omega_M)^n\left(y,\frac{z}{\sqrt{t}}\right),
\end{split}\]
since the pullback under $\lambda_t$ of any volume form $f(y,z)dy^1\wedge\dots\wedge d\ov{z}^{n-m}$ on $B\times\mathbb{C}^{n-m}$
equals
$t^{m-n} f(y,\frac{z}{\sqrt{t}}) dy^1\wedge\dots\wedge d\ov{z}^{n-m}.$
We now claim that in fact we have
$$(p^*T_{-\sigma}^*\omega_M)^n\left(y,\frac{z}{\sqrt{t}}\right)=(p^*T_{-\sigma}^*\omega_M)^n(y,z).$$
To see this, consider the $(n,0)$-form
$$dy^1\wedge\dots\wedge dy^m\wedge dz^1\wedge\dots\wedge dz^{n-m}$$
on $B\times\mathbb{C}^{n-m}$. This form is invariant under the $\mathbb{Z}^{2n-2m}$-action described above
$$(n_1,\dots,n_{2n-2m})\cdot(y,z)=(y,z+\sum_i n_i v_i(y)),$$
where $(y,z)=(y_1,\dots,y_m,z_1,\dots,z_{n-m})$, and so it descends to
a holomorphic $(n,0)$-form to the quotient $(B\times \mathbb{C}^{n-m})/\Lambda$
and using the biholomorphism with $U$ we get a holomorphic $(n,0)$-form
$\Omega$ on $U$. We can then consider the volume form
$(\mn)^{n^2}\Omega\wedge\ov{\Omega}$, and we have
$$T_{-\sigma}^*\omega_M^n=h\cdot (\mn)^{n^2}\Omega\wedge\ov{\Omega},$$
where $h$ is a smooth positive function on $U$. Taking $\ddbar\log$
of both sides we get
$$\ddbar\log h=\ddbar\log\frac{T_{-\sigma}^*\omega_M^n}{(\mn)^{n^2}\Omega\wedge\ov{\Omega}}=0,$$
since $T_{-\sigma}^*\omega_M$ is Ricci--flat and $\Omega$ is a
holomorphic $(n,0)$-form. So $\log h$ is pluriharmonic on $U$, and
this implies that its restriction to any fiber $M_y$ with $y\in B$
is constant. Pulling back via $p$ we get
$$(p^*T_{-\sigma}^*\omega_M^n)(y,z)=(h\circ p)(y,z) (\mn)^{n^2} dy^1\wedge\dots\wedge d\ov{z}^{n-m},$$
but since $h$ is constant along the fibers of $f$ and $p$ is compatible with the projection to $B$ we get that
the function $(h\circ p)(y,z)$ on $B\times\mathbb{C}^{n-m}$ is independent of $z$.
In particular we have
$$(p^*T_{-\sigma}^*\omega_M)^n\left(y,\frac{z}{\sqrt{t}}\right)=(p^*T_{-\sigma}^*\omega_M)^n(y,z),$$
and so the rescaled metrics $\lambda_t^*p^*T_{-\sigma}^*\ti{\omega}_t$ satisfy
the nondegenerate complex Monge-Amp\`ere equation
$$(\lambda_t^*p^*T_{-\sigma}^*\ti{\omega}_t)^n=(p^*\omega_0+t\lambda_t^*p^*T_{-\sigma}^*\omega_M + \ddbar \ti{\vp}_t)^n=
c_t (p^*T_{-\sigma}^*\omega_M)^n$$
on $B\times\mathbb{C}^{n-m}$, where we have set
$$\ti{\vp}_t=\vp_t\circ T_{-\sigma}\circ p\circ \lambda_t.$$
 We claim that the estimates \eqref{estim}
hold.
To see this, we use \eqref{dd} and get
\begin{equation}\label{need4}
p^*\omega_{SF}=p^*T_{-\sigma}^*\omega_M + p^* T_{-\sigma}^*\ddbar\xi,
\end{equation}
for a function $\xi$ on $U$.
On $B\times\mathbb{C}^{n-m}$ we can then use \eqref{need3} and \eqref{need4} and write
\begin{equation}\label{ut}
\begin{split}
\lambda_t^*p^*T_{-\sigma}^*\ti{\omega}_t&=p^*\omega_0+t\lambda_t^*p^*T_{-\sigma}^*\omega_M
+\ddbar\ti{\vp}_t\\
&=p^*\omega_0+t\lambda_t^*p^*(\omega_{SF}-T_{-\sigma}^*\ddbar\xi)+\ddbar\ti{\vp}_t\\
&=p^*\omega_0+p^*\omega_{SF}-t\lambda_t^*p^*T_{-\sigma}^*\ddbar\xi+\ddbar\ti{\vp}_t\\
&=p^*(\omega_0+\omega_{SF})+\ddbar u_t,
\end{split}
\end{equation}
where for simplicity we write
$u_t=\ti{\vp}_t-t(\xi\circ T_{-\sigma}\circ p\circ \lambda_t).$
The functions $u_t$ are uniformly bounded in $C^0(B\times\mathbb{C}^{n-m})$ because
of the $L^\infty$ bound for $\vp_t$ from \cite{DP, EGZ2} and because $\xi$ is a fixed
function on $U$.
The functions $u_t$ satisfy the complex Monge-Amp\`ere equations
\begin{equation}\label{maa}
(p^*\omega_0+p^*\omega_{SF}+\ddbar u_t)^n=c_t (p^*T_{-\sigma}^*\omega_M)^n
\end{equation}
on $B\times\mathbb{C}^{n-m}$, and on any compact subset $K$ of $B\times\mathbb{C}^{n-m}$ the K\"ahler metric
$p^*(\omega_0+\omega_{SF})$ is $C^\infty$ equivalent to the Euclidean metric $\delta$ (with constants that depend only on $K$).
The bounds \eqref{c2b} imply that
$$C^{-1}\delta\leq p^*(\omega_0+\omega_{SF})+\ddbar u_t\leq C\delta,$$
on $K$ for all small $t>0$, where $C$ depends on $K$. The constants $c_t$ are bounded uniformly and away from zero,
because by definition we have
\begin{equation}\label{normaliz}
\lim_{t\to 0}c_t=\binom{n}{m}\frac{\int_M \omega_0^m\wedge\omega_M^{n-m}}{\int_M \omega_M^n}>0,
\end{equation}
see also \cite{EGZ2}, \cite[(2.6)]{To1}.
After shrinking $K$ slightly we can then apply the Evans-Krylov theory (as explained for example in \cite{GT, Si}) and
Schauder estimates to get higher order estimates $\|u_t\|_{C^k(K,\delta)}\leq C(k)$ for all $k\geq 0$, thus proving \eqref{estim}.
\end{proof}

\begin{lemma}\label{sectcurv}
Given any compact set $K\subset M\backslash S$ there is a constant $C_K$ such that
the sectional curvature of $\ti{\omega}_t$ satisfies
\begin{equation}\label{sectbound2}
\sup_K |\mathrm{Sec}(\ti{\omega}_t)|\leq C_K,
\end{equation}
for all small $t>0$.
\end{lemma}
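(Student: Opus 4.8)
The plan is to deduce the curvature bound from the higher-order estimates of Proposition \ref{estimates} together with the elementary fact that sectional curvature is invariant under pullback by diffeomorphisms. Since $K\subset M\backslash S$ is compact, we may cover it by finitely many open sets of the form $U=f^{-1}(B)$ with $B\subset N\backslash f(S)$ a small ball as above, so it suffices to bound $\sup_{K\cap U}|\mathrm{Sec}(\ti{\omega}_t)|$ for each such $U$. On $U$ we have the biholomorphism $T_{-\sigma}$, the local biholomorphic covering $p$, and the dilation $\lambda_t$, and we set $g_t=\lambda_t^*p^*T_{-\sigma}^*\ti{\omega}_t$ on $B\times\mathbb{C}^{n-m}$. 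Because all three maps are diffeomorphisms where defined, the sectional curvature of $g_t$ at a point $(y,z)$ equals the sectional curvature of $\ti{\omega}_t$ at the corresponding point $T_{-\sigma}(p(\lambda_t(y,z)))$. The whole point of the rescaling is that, although the collapsing metric $\ti{\omega}_t$ has no uniform curvature bounds when measured against a fixed background metric on $M$, the rescaled metric $g_t$ does have bounded geometry.

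Indeed, by Proposition \ref{estimates} the components of $g_t$ are bounded in $C^k(K',\delta)$ for every compact $K'\subset B\times\mathbb{C}^{n-m}$ and every $k$, uniformly in small $t$, while \eqref{c2b} gives the two-sided bound $C^{-1}\delta\leq g_t\leq C\delta$ on $K'$. Taking $k\geq 2$, the uniform bounds on the first and second derivatives of the components of $g_t$, together with the uniform lower bound for $g_t$ (which controls the inverse matrix $g_t^{-1}$), yield a uniform bound on the Christoffel symbols of $g_t$ and their first derivatives, hence on the full Riemann curvature tensor of $g_t$ in the Euclidean coordinates. Since $g_t$ is in addition uniformly equivalent to $\delta$, this gives a uniform bound $\sup_{K'}|\mathrm{Sec}(g_t)|\leq C(K')$ for all small $t$.

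It remains to check that, as $x$ ranges over $K\cap U$, one can always choose a preimage $(y,z)$ lying in a single fixed compact set $K'\subset B\times\mathbb{C}^{n-m}$, independent of $t$. Given $x\in K\cap U$, the point $T_\sigma(x)\in U$ has a $p$-preimage $(y_0,w_0)$ with $w_0$ in a fundamental domain for the lattice $\Lambda_{y_0}$; since $y_0$ stays in a compact subset of $B$ and the lattice varies continuously, $w_0$ can be taken in a region bounded independently of $t$. Solving $\lambda_t(y,z)=(y_0,w_0)$ gives $y=y_0$ and $z=\sqrt{t}\,w_0$, which for all small $t$ lies in the fixed compact set $K'=\overline{B'}\times\{|z|\leq 1\}$, where $\overline{B'}$ contains the $y$-projection of $K\cap U$. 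Then by the invariance of sectional curvature,
\[
|\mathrm{Sec}(\ti{\omega}_t)(x)|=|\mathrm{Sec}(g_t)(y_0,\sqrt{t}\,w_0)|\leq C(K'),
\]
and taking the supremum over $x$ and over the finite cover proves \eqref{sectbound2}. The only genuine work is contained in Proposition \ref{estimates}; once the rescaled metric is known to have bounded geometry, the single point to get right is that the dilation $\lambda_t$ contracts the fibre directions, so that the relevant preimages $z=\sqrt{t}\,w_0$ converge to the zero section and hence remain in a fixed compact set.
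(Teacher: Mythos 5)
Your proof is correct and follows essentially the same route as the paper: pull back $\ti{\omega}_t$ by $T_{-\sigma}\circ p\circ\lambda_t$, use invariance of sectional curvature under (local) biholomorphisms, observe that the relevant preimages $\lambda_t^{-1}(K')$ stay in a fixed compact set since $\lambda_t^{-1}$ contracts the fiber directions, and conclude from \eqref{c2b} together with the uniform $C^k$ estimates of Proposition \ref{estimates}. The only difference is cosmetic: you spell out the Christoffel-symbol computation and the finite covering of $K$, which the paper compresses into ``we can assume $K$ is sufficiently small'' and a one-line citation of \eqref{c2b} and \eqref{estim}.
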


\begin{proof}
We can assume that $K$ is sufficiently small so that $f(K)\subset B$ for a ball $B$ as before, and that there is a compact set $K'\subset B\times\mathbb{C}^{n-m}$ so that $p:K'\to T_\sigma(K)$ is a biholomorphism.
We then have
\[\begin{split}
\sup_K |\mathrm{Sec}(\ti{\omega}_t)|&=
\sup_{T_\sigma(K)}|\mathrm{Sec}(T_{-\sigma}^*\ti{\omega}_t)|=\sup_{K'}|\mathrm{Sec}(p^*T_{-\sigma}^*\ti{\omega}_t)|\\
&=\sup_{\lambda_t^{-1}(K')}|\mathrm{Sec}(\lambda_t^*p^*T_{-\sigma}^*\ti{\omega}_t)|.
\end{split}\]
For $t>0$ small enough, the sets $\lambda_t^{-1}(K)$ are all contained in a fixed compact set $K''\subset B\times\mathbb{C}^{n-m}$. From \eqref{c2b} and \eqref{estim} we then get a uniform bound for the sectional curvatures of
$\lambda_t^*p^*T_{-\sigma}^*\ti{\omega}_t$ on $K''$, and this proves \eqref{sectbound2}.
\end{proof}

\begin{lemma}\label{conv2}
Given any compact set $K$ in $B\times\mathbb{C}^{n-m}$
and any $k\geq 0$ there exists a constant $C$
independent of $t>0$ such that
\begin{equation}\label{estim2}
\|p^*T_{-\sigma}^*\ti{\omega}_t \|_{C^{k}(K,\delta)}\leq C,
\end{equation}
where $\delta$ is the Euclidean metric
on $B\times\mathbb{C}^{n-m}$.
\end{lemma}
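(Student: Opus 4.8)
The plan is to deduce this directly from Proposition \ref{estimates} by undoing the dilation $\lambda_t$. Write $\alpha_t:=p^*T_{-\sigma}^*\ti{\omega}_t$, so that the desired conclusion is that $\alpha_t$ is uniformly bounded in $C^k(K,\delta)$, while Proposition \ref{estimates} already gives that $\beta_t:=\lambda_t^*\alpha_t$ is uniformly bounded in $C^k(K',\delta)$ for every compact $K'\subset B\times\mathbb{C}^{n-m}$. Since $\lambda_t(y,z)=(y,z/\sqrt{t})$ has inverse $\lambda_t^{-1}(y,w)=(y,\sqrt{t}\,w)$, we have $\alpha_t=(\lambda_t^{-1})^*\beta_t$, and the key point is that for $0<t\leq 1$ the map $\lambda_t^{-1}$ contracts the fibre directions by the factor $\sqrt{t}\leq 1$. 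Thus pulling back by $\lambda_t^{-1}$ cannot increase $C^k$ norms, and one just has to make this precise in coordinates.

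Concretely, writing $\alpha_t=\sum a^{(t)}_{I\bar J}\,dw^I\wedge d\bar w^J$ in the coordinates $w=(y_1,\dots,y_m,z_1,\dots,z_{n-m})$, the relation $\beta_t=\lambda_t^*\alpha_t$ together with $\lambda_t^*dy_i=dy_i$ and $\lambda_t^*dz_k=dz_k/\sqrt{t}$ gives, for the three types of components,
\[
a^{(t)}_{y_i\bar y_j}(y,w)=b^{(t)}_{y_i\bar y_j}(y,\sqrt{t}\,w),\quad
a^{(t)}_{y_i\bar z_k}(y,w)=\sqrt{t}\,b^{(t)}_{y_i\bar z_k}(y,\sqrt{t}\,w),\quad
a^{(t)}_{z_k\bar z_l}(y,w)=t\,b^{(t)}_{z_k\bar z_l}(y,\sqrt{t}\,w),
\]
where $b^{(t)}_{I\bar J}$ denote the (uniformly $C^k$-bounded) coefficients of $\beta_t$. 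Since $\sqrt{t}\leq 1$, the $C^0$ bound on $\alpha_t$ is immediate; in fact the mixed and pure-fibre components even tend to zero as $t\to 0$.

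For the higher derivatives I would apply the chain rule to these identities: a derivative $\partial/\partial y_i$ leaves the prefactors untouched, while each derivative $\partial/\partial w_k$ landing on $b^{(t)}_{I\bar J}(y,\sqrt{t}\,w)$ produces a factor $\sqrt{t}\leq 1$ times a first derivative of $b^{(t)}_{I\bar J}$ evaluated at $(y,\sqrt{t}\,w)$. Hence any $\delta$-derivative of $a^{(t)}_{I\bar J}$ of order $\leq k$ is a finite sum of terms, each a nonnegative power of $\sqrt{t}\leq 1$ times a derivative of order $\leq k$ of some $b^{(t)}_{I'\bar J'}$ evaluated at $(y,\sqrt{t}\,w)$. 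One last point to check is the range of evaluation: as $(y,w)$ ranges over a fixed compact $K$, the point $(y,\sqrt{t}\,w)$ stays inside one fixed compact subset of $B\times\mathbb{C}^{n-m}$ for all $0<t\leq 1$, so only the $C^k$ bounds of $\beta_t$ on that fixed compact set are used, and these are exactly what Proposition \ref{estimates} supplies. Combining these observations yields $\|\alpha_t\|_{C^k(K,\delta)}\leq C$, as required.

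There is no serious analytic difficulty here --- all the real work was done in Proposition \ref{estimates}, and this lemma is essentially rescaling bookkeeping. The only thing one must get right is the \emph{direction} of the scaling: because $\lambda_t$ \emph{expands} the fibre coordinates as $t\to 0$, its inverse \emph{contracts} them, so every Jacobian factor and every chain-rule factor is a power of $\sqrt{t}\leq 1$ and therefore helps rather than hurts. Had the scaling gone the other way, these factors would blow up and no uniform estimate could be extracted; keeping careful track of where each factor of $\sqrt{t}$ or $t$ appears is the one place to be attentive.
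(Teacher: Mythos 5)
Your proposal is correct and is essentially identical to the paper's own proof: both deduce the lemma from Proposition \ref{estimates} by pulling back through $\lambda_{1/t}=\lambda_t^{-1}$, observing that the fibre--fibre, mixed, and base--base components pick up factors $t$, $\sqrt{t}$, and $1$ respectively, that each chain-rule derivative contributes a further factor $\sqrt{t}\leq 1$, and that the evaluation points $(y,\sqrt{t}\,z)$ remain in a fixed compact set. Nothing to correct.
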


\begin{proof}
Given $K$, for all $t>0$ small enough the sets
$\lambda_t^{-1}(K)$ are all contained in a fixed compact set $K'\subset B\times\mathbb{C}^{n-m}$. We wish to deduce \eqref{estim2} from \eqref{estim}.
To see this, write on $B\times\mathbb{C}^{n-m}$
\[\begin{split}
\lambda_t^*p^*T_{-\sigma}^*\ti{\omega}_t=&\mn\bigg(\sum_{i,j} A_{i\ov{j}}(t,y,z)dz^i\wedge d\ov{z}^j+\sum_{i,j} B_{i\ov{j}}(t,y,z)dy^i\wedge d\ov{y}^j\\
&+\sum_{i,j} C_{i\ov{j}}(t,y,z)dy^i\wedge d\ov{z}^j
+\sum_{i,j} D_{i\ov{j}}(t,y,z)dz^i\wedge d\ov{y}^j\bigg).
\end{split}\]
Thanks to \eqref{estim}, on $K'$ the coefficents $A,B,C,D$ satisfy uniform
$C^k$ estimates in the variables $(y,z)$ independent of $t$. We then pull back this equation via the map $\lambda_{1/t}$ (the inverse of $\lambda_t$) and get
\[\begin{split}
p^*T_{-\sigma}^*\ti{\omega}_t&=\mn\bigg(t\sum_{i,j} A_{i\ov{j}}(t,y,z\sqrt{t})dz^i\wedge d\ov{z}^j+\sum_{i,j} B_{i\ov{j}}(t,y,z\sqrt{t})dy^i\wedge d\ov{y}^j\\
&+\sqrt{t}\sum_{i,j} C_{i\ov{j}}(t,y,z\sqrt{t})dy^i\wedge d\ov{z}^j
+\sqrt{t}\sum_{i,j} D_{i\ov{j}}(t,y,z\sqrt{t})dz^i\wedge d\ov{y}^j\bigg),
\end{split}\]
and the new coefficients are uniformly bounded in $C^k$ on $K$, thus proving \eqref{estim2}.
\end{proof}

\begin{proposition}\label{conv1}
As $t$ goes to zero we have
$$\ti{\omega}_t\to f^*\omega$$ in $C^\infty_{loc}(M\backslash S,\omega_M)$,
where $\omega=\omega_N+\ddbar\vp$ is a K\"ahler metric on
$N\backslash f(S)$ with $\Ric(\omega)=\omega_{\rm WP}$ as in
Theorem \ref{main1}.
\end{proposition}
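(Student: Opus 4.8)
The plan is to combine the uniform local estimates of Lemma \ref{conv2} with an Arzel\`a--Ascoli argument, and then to identify the limit using the weaker convergence already established in \cite{To1}. First I would recast the bound \eqref{estim2} as a statement directly about $\ti\omega_t$. Fix a ball $B\subset N\backslash f(S)$ as above, with $U=f^{-1}(B)$, covering map $p:B\times\mathbb{C}^{n-m}\to U$, and the fixed translation $T_{-\sigma}$. Since $p$ and $T_{-\sigma}$ are holomorphic maps independent of $t$, and $\ti\omega_t=T_\sigma^*(T_{-\sigma}^*\ti\omega_t)$ because $T_\sigma$ and $T_{-\sigma}$ are mutually inverse biholomorphisms, the uniform bounds $\|p^*T_{-\sigma}^*\ti\omega_t\|_{C^k(K,\delta)}\leq C$ transfer to uniform bounds on $\ti\omega_t$ over compact subsets of $U$, measured with respect to $\omega_M$ (which is $C^\infty$-equivalent to $\delta$ on such compacta). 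Covering $M\backslash S$ by countably many such $U$, I obtain for every compact $K\subset M\backslash S$ and every $k\geq 0$ a constant $C=C(K,k)$, independent of small $t>0$, with $\|\ti\omega_t\|_{C^k(K,\omega_M)}\leq C$.

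Next I would run the standard compactness argument. Given any sequence $t_j\to 0$, these uniform $C^k_{\mathrm{loc}}$ bounds together with Arzel\`a--Ascoli produce a subsequence along which $\ti\omega_{t_j}$ converges in $C^\infty_{\mathrm{loc}}(M\backslash S,\omega_M)$ to a smooth closed nonnegative $(1,1)$-form $\omega_\infty$. The only remaining, and genuinely substantive, point is to identify $\omega_\infty$ independently of the subsequence. The proof of Lemma \ref{conv2} already suggests the shape of the answer: in the expansion of $p^*T_{-\sigma}^*\ti\omega_t$ into $dz\wedge d\ov{z}$, $dy\wedge d\ov{y}$ and mixed components, the fiber and mixed pieces carry explicit factors of $t$ and $\sqrt{t}$, so the smooth limit $\omega_\infty$ has no fiber or mixed part and must be the pullback of a closed form from the base. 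To pin down which form, I would invoke the result of \cite{To1} recalled in the introduction: $\vp_t\to\vp\circ f$ in $C^{1,\alpha}_{\mathrm{loc}}(M\backslash S,\omega_M)$, where $\omega=\omega_N+\ddbar\vp$. In particular $\vp_t\to\vp\circ f$ uniformly on compacta, hence $\ddbar\vp_t\to\ddbar(\vp\circ f)$ in the sense of currents, and since $\omega_0$ is fixed and $t\omega_M\to 0$ we obtain
\[
\ti\omega_t=\omega_0+t\omega_M+\ddbar\vp_t\longrightarrow \omega_0+\ddbar(\vp\circ f)=f^*\omega
\]
weakly on $M\backslash S$. As smooth convergence along the subsequence implies weak convergence, uniqueness of weak limits forces $\omega_\infty=f^*\omega$.

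Since every sequence $t_j\to 0$ thus admits a subsequence along which $\ti\omega_{t_j}\to f^*\omega$ in $C^\infty_{\mathrm{loc}}$, and the limit is independent of the subsequence, the whole family converges: $\ti\omega_t\to f^*\omega$ in $C^\infty_{\mathrm{loc}}(M\backslash S,\omega_M)$ as $t\to 0$. The identity $\Ric(\omega)=\omega_{\rm WP}$ is exactly the content of \cite{To1} and is carried over unchanged. I expect the real work to reside entirely in the a priori estimates behind Lemma \ref{conv2} (that is, Proposition \ref{estimates} and the Evans--Krylov/Schauder bootstrap); the present statement is then essentially the promotion of the weak $C^{1,\alpha}$ convergence of \cite{To1} to smooth convergence, and the only genuine, if modest, obstacle is the correct identification of the smooth limit through the already-known weak limit.
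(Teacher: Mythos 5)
Your proof is correct and follows essentially the same route as the paper: both arguments rest on the uniform $C^k$ bounds of Lemma \ref{conv2}, transferred through the fixed biholomorphism $T_{-\sigma}\circ p$, combined with the $C^{1,\alpha}_{\mathrm{loc}}$ convergence $\vp_t\to f^*\vp$ from \cite{To1} and a compactness/uniqueness-of-limit argument. The only (cosmetic) difference is that the paper runs this at the level of the potentials $\vp_t$ (using the $L^\infty$ bound of \cite{DP, EGZ2} to control them), whereas you work directly with the forms $\ti{\omega}_t$ and identify the subsequential limits via convergence in the sense of currents; both identifications are valid and use the same inputs.
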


\begin{proof}
Recall that $\ti{\omega}_t=\omega_0+t\omega_M+\ddbar\vp_t$,
so that
$$p^*T_{-\sigma}^*\ti{\omega}_t=p^*\omega_0+tp^*T_{-\sigma}^*\omega_M+\ddbar(\vp_t\circ T_{-\sigma}\circ p).$$
We now fix a compact set $K\subset M\backslash S$, which we can assume is sufficiently small so that $f(K)\subset B$ for a ball $B$ as before, and that there is a compact set $K'\subset B\times\mathbb{C}^{n-m}$ such that $p:K'\to T_{\sigma}(K)$ is a biholomorphism.
From \eqref{estim2} (together with the $L^\infty$ bound for $\vp_t$ from \cite{DP, EGZ2}) we see that
$$\|\vp_t\circ T_{-\sigma}\circ p\|_{C^k(K',\delta)}\leq C(k),$$
and therefore also
\begin{equation}\label{cinfty}
\|\vp_t\|_{C^k(K,\omega_M)}\leq C(k),
\end{equation}
since $T_{-\sigma}\circ p:K'\to K$ is a fixed biholomorphism.
From \cite{To1} we know that $\vp_t\to f^*\vp$ in $C^{1,\alpha}_{loc}(M\backslash S, \omega_M)$, and so \eqref{cinfty} implies that $\vp_t\to f^*\vp$ in $C^\infty_{loc}(M\backslash S, \omega_M)$, and therefore that
$\ti{\omega}_t\to f^*\omega$ in $C^\infty_{loc}(M\backslash S,\omega_M)$.
\end{proof}

 As a corollary of this, for any compact subset $K\subset
M\backslash S$,  there is a positive function $\ve(t)$ which goes to
zero as $t\rightarrow 0$, such that
\begin{equation}\label{conseq}
f^{*}\omega -\ve(t)\omega_M
    \leq  \tilde{\omega}_{t} \leq f^{*}\omega +\ve(t)\omega_M
\end{equation}
 on $K$, as well as
\begin{equation}\label{conseq2}
e^{-\ve(t)}f^{*}\omega
    \leq  \tilde{\omega}_{t}.
\end{equation}

We now finish the proof of Theorem \ref{main1}. We have
already proved the first two statements in Proposition \ref{conv1} and Lemma \ref{sectcurv}, and it remains to prove \eqref{rescaled}. First, we need the following lemma. 
\begin{lemma}\label{conv4}
As $t$ goes to zero we have
\begin{equation}\label{conv}
\lambda_t^*p^*T_{-\sigma}^*\ti{\omega}_t\to p^*(\omega_{SF}+f^*\omega)
\end{equation}
in $C^\infty_{loc}(B\times\mathbb{C}^{n-m},\delta)$,
where $\delta$ is the Euclidean metric.
\end{lemma}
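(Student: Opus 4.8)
My plan is to show that the potentials $u_t$ appearing in the expansion \eqref{ut} converge in $C^\infty_{loc}(B\times\CC^{n-m},\delta)$ to the fibrewise constant function $\vp\circ f\circ p$, i.e. to the function $(y,z)\mapsto\vp(y)$, where $\vp$ is the base potential produced in Proposition \ref{conv1}; the lemma will then follow by applying $\ddbar$ and rewriting the limit form. Recall that \eqref{ut} reads
$$\lambda_t^*p^*T_{-\sigma}^*\ti{\omega}_t=p^*(\omega_0+\omega_{SF})+\ddbar u_t,\qquad u_t=\ti{\vp}_t-t(\xi\circ T_{-\sigma}\circ p\circ\lambda_t),$$
with $\ti{\vp}_t=\vp_t\circ T_{-\sigma}\circ p\circ\lambda_t$, and that $p^*(\omega_0+\omega_{SF})$ is a fixed smooth form; so everything reduces to controlling $u_t$.

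First I would record uniform local derivative bounds for $u_t$. By Proposition \ref{estimates} the forms $\lambda_t^*p^*T_{-\sigma}^*\ti{\omega}_t$ are bounded in $C^k(K,\delta)$ for every $k$ and every compact $K$, and since $p^*(\omega_0+\omega_{SF})$ is fixed and smooth, $\ddbar u_t$ is bounded in every $C^k(K,\delta)$. The trace of $\ddbar u_t$ with respect to $\delta$ is a constant multiple of the Euclidean Laplacian of $u_t$, and the $u_t$ are uniformly bounded in $C^0$ (as recorded in the proof of Proposition \ref{estimates}), so interior Schauder estimates upgrade this to uniform bounds $\|u_t\|_{C^k(K,\delta)}\leq C(k,K)$ for all $k$.

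The heart of the argument, and the main obstacle, is the $C^0$ convergence $u_t\to\vp\circ f\circ p$. The correction term $t(\xi\circ T_{-\sigma}\circ p\circ\lambda_t)$ is harmless: $\xi\circ T_{-\sigma}\circ p$ is $\Lambda_y$-periodic in $z$, hence bounded on $B'\times\CC^{n-m}$ for $B'\Subset B$, so after multiplication by $t$ it tends to $0$ uniformly. The delicate term is $\ti{\vp}_t=F_t\circ\lambda_t$ with $F_t:=\vp_t\circ T_{-\sigma}\circ p$: here the dilation $\lambda_t(y,z)=(y,z/\sqrt t)$ drives a fixed $(y,z)$ off to infinity in the fibre direction, so one cannot directly invoke the $C^0_{loc}$ convergence $\vp_t\to f^*\vp$ of Proposition \ref{conv1}. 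The resolution is periodicity: $F_t$ is $\Lambda_y$-periodic in $z$, and by Proposition \ref{conv1} it converges to $(\vp\circ f)\circ T_{-\sigma}\circ p$, which equals $\vp\circ f\circ p=\vp(y)$ since $f\circ T_{-\sigma}=f$. Choosing a fixed compact set $\ov{B'}\times D$ with $D$ containing a fundamental domain of $\Lambda_y$ for every $y\in\ov{B'}$ (possible because the lattices vary continuously over the compact $\ov{B'}$), periodicity promotes the $C^0$ convergence on $\ov{B'}\times D$ to uniform convergence $F_t\to\vp(y)$ on all of $B'\times\CC^{n-m}$; composing with $\lambda_t$, which fixes the fibrewise constant $\vp(y)$, gives $\ti{\vp}_t\to\vp(y)$ uniformly. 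Hence $u_t\to\vp\circ f\circ p$ in $C^0_{loc}$.

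Finally, interpolating this $C^0$ convergence against the uniform higher-order bounds from the second step yields $u_t\to\vp\circ f\circ p$ in $C^\infty_{loc}$, so $\ddbar u_t\to\ddbar(\vp\circ f\circ p)=p^*f^*\ddbar\vp$. It remains to identify the limit form: using $\omega_0=f^*\omega_Z$ and that $\omega_N$ is the restriction of $\omega_Z$ to $N\setminus f(S)$, we have $p^*\omega_0=p^*f^*\omega_N$, so that
$$p^*(\omega_0+\omega_{SF})+\ddbar(\vp\circ f\circ p)=p^*\omega_{SF}+p^*f^*(\omega_N+\ddbar\vp)=p^*(\omega_{SF}+f^*\omega),$$
which is exactly \eqref{conv}. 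The crux throughout is the third step, where the blow-up of $\lambda_t$ in the fibre direction forces one to trade $C^0_{loc}$ convergence for genuinely uniform convergence, which periodicity makes available.
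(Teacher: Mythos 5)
Your proof is correct, and its skeleton --- the decomposition \eqref{ut}, the $C^0$ convergence $u_t\to(f\circ p)^*\vp$, then the upgrade to $C^\infty_{loc}$ via the uniform higher-order bounds --- is exactly the paper's. The genuine difference is at the step you single out as the crux, namely neutralizing the dilation $\lambda_t$. The paper inserts the intermediate point $p(y,z)$, writes $|\ti{\vp}_t(y,z)-\vp(y)|\le|\vp_t\circ p(y,z/\sqrt{t}-\ti{\sigma}(y))-\vp_t\circ p(y,z)|+|\vp_t\circ p(y,z)-\vp(y)|$, and controls the first term by the fibrewise oscillation estimate $\sup_{y\in B}\mathrm{osc}_{M_y}\vp_t\le Ct$, an additional input quoted from \cite[(3.9)]{To1}; the second term is bounded by $\sup_U|\vp_t-f^*\vp|$. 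You instead use that $F_t=\vp_t\circ T_{-\sigma}\circ p$ and its limit $\vp(y)$ are $\Lambda_y$-periodic in $z$, so the sup of their difference over the noncompact cylinder is attained on a fixed compact set --- equivalently it equals $\sup_U|\vp_t-f^*\vp|$, since everything descends to $U$ and the limit is fibrewise constant --- and precomposition with $\lambda_t$, which preserves the $y$-coordinate and maps the cylinder bijectively onto itself, leaves that sup unchanged. Your route is marginally more economical: it needs one fewer estimate from \cite{To1}, exploiting only the fibrewise constancy of the limit potential, whereas the quantitative rate $Ct$ supplied by the oscillation bound buys nothing here because the companion term $\sup_U|\vp_t-f^*\vp|$ carries no rate anyway. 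Both arguments rest on the same two facts: that (possibly after shrinking) $\ov{B}$ is compact in $N\backslash f(S)$, so $\sup_U|\vp_t-f^*\vp|\to0$ by the locally uniform convergence from \cite{To1}, and the uniform $C^k(K,\delta)$ bounds on $u_t$; your re-derivation of the latter via Schauder estimates for the Euclidean Laplacian is fine, though these bounds were already recorded in the proof of Proposition \ref{estimates}, so you could simply have quoted them.
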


\begin{proof}
Recall that from \eqref{ut} we see that on $B\times\mathbb{C}^{n-m}$
$$\lambda_t^*p^*T_{-\sigma}^*\ti{\omega}_t=p^*(\omega_0+\omega_{SF})+\ddbar u_t,$$
where the functions $u_t=\ti{\vp}_t-t\lambda_t^*p^*T_{-\sigma}^*\xi$ have uniform $C^\infty$ bounds on compact sets.
We need to show that as $t$ goes to zero we have $u_t\to(f\circ p)^*\vp$ in $C^\infty_{loc}(B\times\mathbb{C}^{n-m},\delta)$, where $f^*\vp$ is the $C^{1,\alpha}$ limit of $\vp_t$ from \cite{To1}. To prove this we need another estimate from the second-named author's work \cite[(3.9)]{To1}, which implies that there is a constant $C$ (that depends on the initial choice of $B$) so that for all $0<t\leq 1$ we have
\begin{equation}\label{oscest}
\sup_{y\in B} \mathrm{osc}_{M_y}\vp_t\leq C t.
\end{equation}
We now use this together with the fact that $\vp_t\to f^*\vp$ in $C^0$ to get
that for any $(y,z)$ in $B\times\mathbb{C}^{n-m}$ we have
\[\begin{split}|\ti{\vp}_t(y,z)-(f\circ p)^*\vp(y,z)|&=\left|\vp_t\circ T_{-\sigma}\circ p\left(y, \frac{z}{\sqrt{t}}\right)-\vp(y)\right|\\
&\leq \left|\vp_t\circ p\left(y,\frac{z}{\sqrt{t}}-\ti{\sigma}(y)\right)-\vp_t\circ p(y,z)\right|\\
&\ \ \ \ +|\vp_t\circ p(y,z)-((f^*\vp)\circ p)(y)|\\
&\leq Ct+\sup_U |\vp_t-f^*\vp|,
\end{split}\]
where in the last line we used \eqref{oscest} because the points $p(y,\frac{z}{\sqrt{t}}-\ti{\sigma}(y))$ and $p(y,z)$ lie in the same fiber $M_y$. Letting $t$ go to zero we see that $\ti{\vp}_t\to (f\circ p)^*\vp$ in $C^0(B\times\mathbb{C}^{n-m})$.
On the other hand we have that
$t\lambda_t^*p^*\xi\to 0$
in $C^0(B\times\mathbb{C}^{n-m})$, and so $u_t\to (f\circ p)^*\vp$
in $C^0(B\times\mathbb{C}^{n-m})$.
Thanks to the higher order estimates for $u_t$, we also have that
$u_t\to (f\circ p)^*\vp$ in $C^\infty_{loc}(B\times\mathbb{C}^{n-m},\delta)$, up to shrinking $B$ slightly.
\end{proof}

 We can now complete the proof of Theorem \ref{main1}.
\begin{proof}
Recall that thanks to Lemma \ref{conv4}, on $B\times\mathbb{C}^{n-m}$ we can write
$$\lambda_t^*p^*T_{-\sigma}^*\ti{\omega}_t-p^*(\omega_{SF}+f^*\omega)=E_t,$$
where the error term $E_t$ is a $(1,1)$-form that goes to zero smoothly on compact sets.
From \eqref{need1} we also have that
$$E_t=\lambda_t^*p^*(T_{-\sigma}^*\ti{\omega}_t-f^*\omega-t\omega_{SF}).$$
If we restrict the form $T_{-\sigma}^*\ti{\omega}_t-f^*\omega-t\omega_{SF}$ to a fiber $M_y$ and divide by $t$ we get
$$\frac{E_t}{t}\bigg|_{\{y\}\times\mathbb{C}^{n-m}}=\lambda_t^*p^*\left(\frac{T_{-\sigma}^*\ti{\omega}_t|_{M_y}}{t}-\omega_{SF,y}\right)$$
Pulling back this via the map $\lambda_{1/t}$ (the inverse of $\lambda_t$) we get
$$\frac{\lambda_{1/t}^*E_t}{t}\bigg|_{\{y\}\times\mathbb{C}^{n-m}}=p^*\left(\frac{T_{-\sigma}^*\ti{\omega}_t|_{M_y}}{t}-\omega_{SF,y}\right).$$
Explicitly we have $\lambda_{1/t}(y,z)=(y,z\sqrt{t})$, which implies that $\lambda_{1/t}^*dz^i=\sqrt{t} dz^i$, and so
$$\frac{\lambda_{1/t}^*E_t}{t}\bigg|_{\{y\}\times\mathbb{C}^{n-m}}(y,z)=E_t\bigg|_{\{y\}\times\mathbb{C}^{n-m}}(y,z\sqrt{t}),$$
which goes to zero smoothly as $t$ approaches zero, uniformly in $y$. It follows that $\frac{T_{-\sigma}^*\ti{\omega}_t|_{M_y}}{t}$ converges smoothly to $\omega_{SF,y}$, and the convergence is uniform as $y$ varies on compact sets of $N\backslash f(S)$. Pulling back via $T_{\sigma}$, and using the fact that $T_\sigma^*\omega_{SF,y}=\omega_{SF,y}$, we see that also $\frac{\ti{\omega}_t|_{M_y}}{t}$ converges smoothly to $\omega_{SF,y}$, as desired.
\end{proof}

\begin{remark} Note that in particular we get the estimate
$$\sup_{M_y} \left|\nabla (\ti{\omega}|_{M_y})\right|^2_{\omega_M}\leq Ct^2,$$
which improves \cite[(2.11)]{To1}.
\end{remark}

\section{Gromov-Hausdorff convergence}\label{sect4}
 In this section we study the collapsed Gromov-Hausdorff
limits of the Ricci--flat metrics $\ti{\omega}_t$ and prove Theorem
\ref{main2}.

\begin{lemma}\label{p00.2}
There is an open subset $X_{0}\subset X$ such that $(X_{0}, d_{X})$
is locally isometric to $(N_{0}, \omega)$ where $N_{0}=N\backslash
f(S) $, i.e. there is a homeomorphism $\phi:  N_{0} \rightarrow
X_{0}$ such that, for any $y\in N_{0}$, there is a neighborhood
$B_{y}\subset N_{0}$ of $y$ satisfying that, if $y_{1}$ and
$y_{2}\in B_{y}$,
  $$d_{\omega}(y_{1},y_{2})=d_{X}(\phi(y_{1}),\phi(y_{2})).$$
  Furthermore, for any $y\in N_{0}$, there is a compact  neighborhood $B\subset N_{0}$ and
  a
  holomorphic section $s:B\rightarrow f^{-1}(B)$, i.e., $f\circ s= {\rm id}$, such
  that $s(y)\rightarrow \phi(y)$ under the Gromov-Hausdorff
  convergence of $(M, \tilde{\omega}_{t_{k}}) $  to
   $(X, d_{X}) $.
\end{lemma}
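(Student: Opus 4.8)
The plan is to build the homeomorphism $\phi$ fiber by fiber, exploiting that each torus fiber $M_y$ collapses to a point under $\tilde\omega_{t_k}$ while the horizontal geometry converges to $(N_0,\omega)$. Fix $y\in N_0$ and a small geodesically convex ball $B\subset N_0$ about it; shrinking $B$ as in Section~\ref{sect2} we obtain a holomorphic section $s:B\to f^{-1}(B)\subset M\backslash S$. Since $\mathrm{diam}_{\tilde\omega_{t_k}}(M_y)\to 0$ by the rescaling \eqref{rescaled}, the images $F_k(M_y)$ under the Gromov--Hausdorff approximations $F_k:(M,\tilde\omega_{t_k})\to X$ shrink to a single point, and I would set $\phi(y):=\lim_k F_k(s(y))$, checking (using completeness of the compact space $X$ and the estimates below) that the limit exists and is independent of the chosen section, precisely because any two points of $M_y$ are $\tilde\omega_{t_k}$-close.

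The heart of the proof is a pair of matching distance estimates. For the upper bound I would use the section: since $\tilde\omega_t\to f^*\omega$ in $C^\infty_{\mathrm{loc}}(M\backslash S)$, we have $s^*\tilde\omega_t\to s^*f^*\omega=(f\circ s)^*\omega=\omega$ smoothly on $\bar B$, so the $\tilde\omega_t$-length of $s\circ\gamma$ tends to the $\omega$-length of $\gamma$ and hence
\[
\limsup_{k\to\infty} d_{\tilde\omega_{t_k}}(s(y_1),s(y_2))\leq d_\omega(y_1,y_2)
\]
for $y_1,y_2\in B$ (with $B$ small enough that its intrinsic distance agrees with $d_\omega$); this also gives continuity of $\phi$. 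For the lower bound I would invoke \eqref{conseq2}: on the compact set $K=f^{-1}(\bar B)\subset M\backslash S$ one has $\tilde\omega_t\geq e^{-\ve(t)}f^*\omega$, so a path remaining in $K$ has $\tilde\omega_t$-length at least $e^{-\ve(t)/2}$ times the $\omega$-length of its projection, whence $d_{\tilde\omega_t}(p,q)\geq e^{-\ve(t)/2}d_\omega(f(p),f(q))$ for such paths. Matching the two bounds yields $d_X(\phi(y_1),\phi(y_2))=d_\omega(y_1,y_2)$, the asserted local isometry.

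The main obstacle is confinement: a priori a near-minimizing $\tilde\omega_t$-path between $s(y_1)$ and $s(y_2)$ might wander out of $K$ toward the critical locus $S$, where \eqref{conseq2} fails and $f^*\omega$ degenerates. To exclude this I would use the global comparison $\tilde\omega_t\geq C^{-1}\omega_0=C^{-1}f^*\omega_Z$ from \cite{To1}, which gives $d_{\tilde\omega_t}(p,q)\geq C^{-1/2}d_{\omega_Z}(f(p),f(q))$ on all of $M$ with $\omega_Z$ non-degenerate even near $f(S)$. Thus every point of a short path projects into a small $\omega_Z$-ball about $y$, and choosing $B$ small enough that this ball lies in $N_0$ confines the path to $f^{-1}(\bar B)\subset M\backslash S$, where the sharp lower bound applies. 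The same global estimate makes $\phi$ injective, since distinct fibers have $\liminf_k d_{\tilde\omega_{t_k}}(s(y_1),s(y_2))\geq C^{-1/2}d_{\omega_Z}(y_1,y_2)>0$ and so cannot collapse to a common point of $X$.

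Finally, to see that $X_0:=\phi(N_0)$ is open and $\phi$ a homeomorphism, I would show that for small $\rho$ the ball $B_X(\phi(y),\rho)$ lies in $\phi(\bar B)$: any $x=\lim_k F_k(p_k)$ with $d_X(x,\phi(y))$ small forces $d_{\tilde\omega_{t_k}}(p_k,s(y))$ small, hence $f(p_k)\to y$ by the global lower bound and $p_k\in f^{-1}(B)$ eventually, so $x\in\overline{\phi(B)}=\phi(\bar B)$. Since the local isometry keeps $\phi(\partial B)$ a positive $d_X$-distance from $\phi(y)$, a smaller ball $B_X(\phi(y),\rho')$ is contained in $\phi(B)$, so $X_0$ is open and $\phi$ is an open continuous injection, hence a homeomorphism onto $X_0$. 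The relation $s(y)\to\phi(y)$ under the Gromov--Hausdorff convergence is then immediate from the construction of $\phi$.
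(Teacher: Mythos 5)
Your overall strategy is sound and, in broad strokes, is the same as the paper's (sections over small balls, fiber collapse for well-definedness, smooth convergence $\tilde{\omega}_t\to f^*\omega$ along the section for the upper distance bound, \eqref{conseq2} for the lower bound, then an openness argument). But there is one genuine gap: your definition $\phi(y):=\lim_k F_k(s(y))$ is not justified as written. Compactness of $X$ gives only \emph{subsequential} convergence of $F_k(s(y))$; the full sequence need not converge at all (the approximations $F_k$ are far from unique -- one may compose them with an oscillating family of isometries of $X$, say), and neither completeness of $X$ nor your distance estimates repair this, since those estimates control the mutual distances of putative limit points but do not single out which point of $X$ is the limit. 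Moreover, since $N_0$ is uncountable, you cannot simply diagonalize over all $y$ at once. The necessary (and standard) repair is the one the paper carries out: fix a countable dense subset $A\subset N_0$ and finitely many sections over a ball cover of each compact set, pass to a further subsequence of $t_k$ by a diagonal argument so that the section points over every $a\in A$ converge in $X$, define $\phi$ on $A$, and then extend to all of $N_0$ using exactly your matching distance estimates (local $1$-Lipschitz property sends Cauchy sequences to Cauchy sequences); an exhaustion of $N_0$ by compacts and a second diagonal argument finishes the construction. Passing to a further subsequence is harmless here because it converges to the same limit space $(X,d_X)$.

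Once that is fixed, the rest of your argument is correct, and your confinement mechanism is a genuinely different (and arguably cleaner) route than the paper's. You invoke the global estimate $\omega_0\leq C\tilde{\omega}_t$ on all of $M$ from \cite[Lemma 3.1]{To1} -- which is indeed a global maximum-principle bound, not merely the local statement \eqref{c2} on $f^{-1}(B)$ -- to force every short $\tilde{\omega}_{t_k}$-path to project into a small $\omega_Z$-ball, thereby confining near-minimizing paths to $f^{-1}(\bar B)\subset M\backslash S$ where \eqref{conseq2} applies, and to get injectivity in one stroke for arbitrary distinct points of $N_0$. The paper instead works compact set by compact set and runs a two-case analysis on the projected minimal geodesic: either it stays inside $K$ (resp.\ $B_\omega(y,2r)$), or the portion inside already has length at least $d_\omega(y_i,\partial K)$ (resp.\ $2r$), which suffices for both injectivity and the lower bound while using \eqref{c2} and \eqref{conseq2} only over preimages of compact subsets of $N_0$. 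What your approach buys is brevity and a single uniform estimate valid near $f(S)$; what the paper's buys is independence from any global estimate on $M$, which is why it phrases everything through the case analysis. Your openness argument is likewise parallel to the paper's, with the same global bound substituting for the paper's geodesic-length dichotomy; just note that concluding $x'\in\phi(\bar B)$ there additionally uses the uniform collapse of fibers over $\bar B$ (so that $p_k$ is $\tilde{\omega}_{t_k}$-close to $s(f(p_k))$) together with continuity of $\phi$, which you have via the upper bound.
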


\begin{proof}
Let $A$ be a countable dense subset of $N_{0}$, and $K\subset N_{0}$
be a compact subset with the interior $\Int K $ non-empty.  Let
$\{B_{i}\} $ be a finite  covering of $K$ with small Euclidean balls
such that each of the concentric balls $B_i'$ of half radius still cover $K$.
 Let $s_{i}: B_{i} \rightarrow f^{-1}(B_{i})$ be
sections on $B_{i}$, i.e., holomorphic maps with $f\circ s_{i}={\rm
id} $.

  Now, we
  define a map $\phi$ from $A\cap K=\{a_{1}, a_{2}, \cdots\}$ to
  $X$. Suppose that the point $a_1$ lies inside the ball $B_i'$, and
consider the points $s_i(a_1)$ inside $M$. Under the Gromov-Hausdorff convergence
of $(M,\ti{\omega}_{t_k})$ to $(X,d_X)$, a subsequence of these points converges to a point $b_1$ in $X$, because the diameter of $(M,\ti{\omega}_{t_k})$ is uniformly bounded.
If $a_{1}$ also lies inside another ball $B_{j}'$, then
    \eqref{rescaled} (or also \cite[(2.10)]{To1}) shows that
    $d_{\tilde{\omega}_{t_{k}}}(s_{i}(a_{1}),s_{j}(a_{1}))\rightarrow
    0$ when $t_{k}\rightarrow 0$. Thus, by passing  to subsequences,
    both $s_{i}(a_{1})$ and $s_{j}(a_{1})$ converge to
    the same point  $b_{1}\in X$ under the Gromov-Hausdorff convergence of
   $(M, \tilde{\omega}_{t_{k}}) $  to
   $(X, d_{X}) $.  We then define
  $\phi(a_{1})=b_{1}$.
 For $a_{2}$,
     by repeating the above procedure, we obtain
  that a subsequence $s_{i_{j}}(a_{j})$, $j=1,2$,
  converges to  $b_{j}\in X$, $j=1,2$, respectively. Define $\phi(a_{2})=b_{2}$.  By
  repeating this procedure and with a diagonal argument, we can find a
  subsequence of $(M, \tilde{\omega}_{t_{k}})$, denoted by $(M, \tilde{\omega}_{t_{k}})$ also, such
  that  $s_{i_{j}}(a_{j})$
  converges to  $b_{j}\in X$ along the Gromov-Hausdorff convergence. For any
  $a_{j} \in A\cap K$, define $\phi(a_{j})=b_{j}$.

  Now, we prove that  $\phi: A\cap \Int K
\rightarrow X$ is injective.   If it is not true, there are $y_{1}$,
$y_{2}\in A\cap \Int K$ such that $y_{1}\neq y_{2} $, and
$\phi(y_{1})=\phi(y_{2})$, which implies
$d_{\tilde{\omega}_{t_{k}}}(s_{i_{1}}(y_{1}),
s_{i_{2}}(y_{2}))\rightarrow 0 $.  If $\gamma_{k}$ is a minimal
geodesic in $(M, \tilde{\omega}_{t_{k}}) $ connecting
$s_{i_{1}}(y_{1})$  and $s_{i_{2}}(y_{2})$, then
$$C^{-1}{\rm length}_{\omega_{N}}(f(\gamma_{k})\cap
K)\leq {\rm length}_{\tilde{\omega}_{t_{k}}}(\gamma_{k}\cap
f^{-1}(K))\leq d_{\tilde{\omega}_{t_{k}}}(s_{i_{1}}(y_{1}),
s_{i_{2}}(y_{2})), $$ by \eqref{c2} for a constant $C>0$ independent
of $k$. Thus, if $f(\gamma_{k})\subset  K$ for $t_{k}\ll 1$,
  $$d_{\omega_{N}}(y_{1}, y_{2})\leq C{\rm
  length}_{\omega_{N}}(f(\gamma_{k})) \rightarrow 0,$$ or, if $f(\gamma_{k})\cap   N\backslash K$ are not empty
   by passing
  to  a subsequence,  $$d_{\omega_{N}}(y_{1}, \partial K)+d_{\omega_{N}}(\partial K,  y_{2})\leq C{\rm
  length}_{\omega_{N}}(f(\gamma_{k})\cap K) \rightarrow 0.  $$
  In both cases, we obtain contradictions. Thus $\phi: A\cap \Int K
\rightarrow X$ is injective.

  Note that   there is a $r>0$ such that, for any $y\in \Int K$,
  the metric ball $B_{\omega}(y, r)$ is a geodesically  convex set, i.e. for any $ y_{1}$ and $ y_{2}\in
  B_{\omega}(y, r)$, there is a minimal geodesic $\gamma\subset B_{\omega}(y, r)$ connecting $ y_{1}$ and $
  y_{2}$, which implies $$d_{\omega}( y_{1}, y_{2})={\rm
  length}_{\omega}(\gamma)\leq 2r. $$
   We take $r\ll 1$ such that there is a $B_{i}'$ with  $B_{\omega}(y, 2r)\subset B_{i}'
   $. If $y_{1}, y_{2}\in A$, by Proposition \ref{conv1},
\[\begin{split}
d_{X}(\phi(y_{1}),\phi(y_{2}))&=\lim_{t_{k}\rightarrow
   0}d_{\tilde{\omega}_{t_{k}}}(s_{i}(y_{1}),s_{i}(y_{2}))\leq \lim_{t_{k}\rightarrow
   0}{\rm
  length}_{\tilde{\omega}_{t_{k}}}(s_{i}(\gamma))\\
&={\rm
  length}_{\omega}(\gamma)=d_{\omega}( y_{1}, y_{2}).\end{split}\]
  If  $\gamma_{k}$ is   a minimal
  geodesic in $(M, \tilde{\omega}_{t_{k}})$ connecting
  $s_{i}(y_{1}) $ and $s_{i}(y_{2})$, then \eqref{conseq2} implies that
 $$e^{-\frac{\ve(t_k)}{2}}{\rm
  length}_{\omega}(f(\gamma_{k})\cap B_{\omega}(y, 2r))\leq {\rm
  length}_{\tilde{\omega}_{t_{k}}}(\gamma_{k})\rightarrow d_{X}(\phi(y_{1}),\phi(
   y_{2})),$$
for some function $\ve(t)\to 0$ as $t\to 0$.
 If $f(\gamma_{k})\subset  B_{\omega}(y, 2r)$ for
   $t_{k}\ll 1$ by passing to a subsequence, $${\rm
  length}_{\omega}(f(\gamma_{k}))\geq {\rm
  length}_{\omega}(\gamma),$$ since $\gamma$ is a minimal geodesic
  in $(N_{0},\omega) $. If $f(\gamma_{k})\cap N_{0}\backslash B_{\omega}(y,
  2r)$ is not empty for $t_{k}\ll 1$, then there is a $\bar{y}\in f(\gamma_{k})\cap N_{0}\backslash B_{\omega}(y,
  2r)$. Since $ y_{1}$, $ y_{2}\in
  B_{\omega}(y, r)$ and $f(\gamma_{k})$ connects  $ y_{1}$ and  $
  y_{2}$,
   $${\rm
  length}_{\omega}(f(\gamma_{k})\cap B_{\omega}(y, 2r))\geq d_{\omega}(y_{1}, \bar{y})
  +d_{\omega}(y_{2}, \bar{y})\geq  2r \geq {\rm
  length}_{\omega}(\gamma).$$
In both cases,
\[\begin{split}
d_{\omega}( y_{1}, y_{2})&= {\rm
  length}_{\omega}(\gamma) \leq \lim_{t_{k}\rightarrow
   0}{\rm
  length}_{\omega}(f(\gamma_{k})\cap B_{\omega}(y, 2r))\\
&\leq d_{X}(\phi(y_{1}),\phi(
   y_{2})).\end{split}\]
 Thus $d_{\omega}( y_{1}, y_{2})=d_{X}(\phi(y_{1}),\phi(
   y_{2})),$  which shows that $\phi: (A\cap  \Int K, d_{\omega})\rightarrow (X, d_{X})$ is a
   local isometric embedding.  If $\{y_{1,j}\}$ and $\{y_{2,j}\}$
   are two sequences in $A\cap  \Int  K$ such that $\lim_{j\rightarrow\infty}d_{\omega}( y_{i,j},
   y)=0$ for $i=1,2$, then $\lim_{j\rightarrow\infty}d_{\omega}( y_{1,j},
   y_{2,j})=0$ and $\{y_{1,j}, y_{2,j}\}\subset B_{\omega}(y, r)$ for $j\gg 1$.
   Hence $d_{\omega}( y_{1,j}, y_{2,j})=d_{X}(\phi(y_{1,j}),\phi(
   y_{2,j}))$ and $d_{\omega}( y_{i,j}, y_{i,j+\ell})=d_{X}(\phi(y_{i,j}),\phi(
   y_{i,j+\ell}))$ for $j\gg 1$ and any $\ell\geq 0$, which implies that
  $\{\phi(y_{1,j})\}$ and $\{\phi(y_{2,j})\}$
   are two Cauchy  sequences, and converge to a unique point $x\in X$.
By defining  $\phi(y)=x$,   $\phi$ extends
   to a unique  map, denoted still  by $\phi$,  from $\Int K$ to $X$ which is also a local isometric embedding.

   Now  we prove that  $\phi ( \Int K)$ is an open subset of $X$. Let
   $x\in \phi ( \Int K)$, i.e. there is a $y\in \Int K $
   such that $\phi (y)=x $, and let $x'\in X$ with $d_{X}(x,x')<\rho $
   for a constant $\rho < \frac{1}{8}d_{\omega}(y, \partial K)$. From the
   above construction,   $y\in B_{i}'$ for a $B_{i}'$, and $s_{i}(y) \rightarrow x$ under Gromov-Hausdorff
   convergence.
     There is a
   sequence of points $p_{k}\in (M, \tilde{\omega}_{t_{k}})$ such
   that $p_{k} \rightarrow x'$ under the Gromov-Hausdorff
   convergence. If  $\gamma_{k}'$ is a  minimal geodesic connecting
     $s_{i}(y)$ and $p_{k}$ in $(M, \tilde{\omega}_{t_{k}})$, then
      $$d_{\tilde{\omega}_{t_{k}}}(s_{i}(y),p_{k})={\rm
      length}_{\tilde{\omega}_{t_{k}}}(\gamma_{k}')\rightarrow
      d_{X}(x,x').$$ Equation \eqref{conseq2} implies that, for $k\gg 1$,   \[\begin{split}
\frac{1}{2}{\rm
  length}_{\omega}(f(\gamma_{k}')\cap K) &\leq e^{-\frac{\ve(t_k)}{2}}{\rm
  length}_{\omega}(f(\gamma_{k}')\cap K) \\ & \leq {\rm
  length}_{\tilde{\omega}_{t_{k}}}(\gamma_{k}')<2\rho< \frac{1}{4}d_{\omega}(y, \partial K).
\end{split}\]
  Thus $f(p_{k})\in K'\subset \Int K$ where $K'$ is a compact
  subset of $\Int K$.  By passing to a subsequence,
  $f(p_{k})\rightarrow y'$ in $(K', \omega)$.  By Proposition \ref{conv1},
   $d_{\tilde{\omega}_{t_{k}}}(p_{k},
   s_{i_{k}}(f(p_{k})))\rightarrow 0$ when $t_{k}\rightarrow 0$,
   and, thus, $s_{i_{k}}(f(p_{k}))\rightarrow x'$ under the Gromov-Hausdorff
   convergence.  The above construction shows that $\phi(y')=x'$,
   which implies that $\{x'|d_{X}(x,x')<\rho  \}\subset  \phi ( \Int
   K)$. Hence $\phi ( \Int K)$ is open, and $\phi : \Int K \rightarrow \phi ( \Int K) $ is a
   homeomorphism.

   Let $K_0\subset \cdots \subset K_{j}\subset K_{j+1} \subset \cdots \subset N_{0}
   $ be a family of compact subsets with $
   N_{0}=\bigcup_{j}\Int K_{j}$.  Given each $K_{j}$, the above
   argument constructs  a
   local isometric embedding $ \phi_{j}: (\Int K_{j},\omega)
   \rightarrow (X, d_{X})$, which is a homeomorphism onto the image $\phi_{j} (\Int K_{j}) $.
   By the same argument as above, $ \phi_{j}$
   extends to a  local isometric embedding $ \phi_{j+1}: ( \Int K_{j+1},\omega)
   \rightarrow (X, d_{X})$, i.e. $\phi_{j+1}|_{\Int K_{j}}=\phi_{j}
   $, which is a homeomorphism onto the image $\phi_{j+1} (\Int  K_{j+1}) $.
    By  a diagonal argument,  we  obtain a local isometry  $ \phi: ( N_{0},\omega)
   \rightarrow (\phi( N_{0}), d_{X})\subset (X, d_{X})$.
\end{proof}

 The above lemma proves the existence of $\phi$  in
Theorem \ref{main2}, and is an analog  of Lemma 4.1 in  \cite{RZ}
for the collapsing case. In the rest of this section, we prove that
$X_{0}=\phi(N_0)$ is dense in $X$.

  Let $\bar{x}\in X_{0}$ and $\bar{p}_{k}\in M$ such that $\bar{p}_{k}\rightarrow \bar{x}$
  under the Gromov-Hausdorff  convergence of $(M, \tilde{\omega}_{t_{k}})$ to $ (X, d_{X})
  $, and let $$\underline{V}_{k}(p, r)=\frac{{\rm Vol}_{\tilde{\omega}_{t_{k}}}
  (B_{\tilde{\omega}_{t_{k}}}(p,r))}{{\rm Vol}_{\tilde{\omega}_{t_{k}}}
  (B_{\tilde{\omega}_{t_{k}}}(\bar{p}_{k},1))},$$ for any $p\in M$ and $r>0$.   By Theorem 1.6 in
  \cite{CC1},  there is a continuous function
  $\underline{V}_{0}:X\times [0,\infty)\rightarrow [0,\infty)$
  such that,  if $p_{k}\rightarrow x$ under the  convergence of $(M, \tilde{\omega}_{t_{k}})$ to $ (X, d_{X})
  $, then    \begin{equation}\label{e4.1} \underline{V}_{k}(p_{k}, r)\rightarrow \underline{V}_{0}(x,r).
  \end{equation} By Theorem 1.10 in
  \cite{CC1}, $\underline{V}_{0}$ induces a unique Radon
  measure $\nu$ on $X$ such that \begin{equation}\label{e4.2}\nu(B_{d_{X}}(x,r))=\underline{V}_{0}(x,r),
  \ \ \ {\rm and} \ \ \frac{\nu(B_{d_{X}}(x,r_{1}))}{\nu(B_{d_{X}}(x,r_{2}))}\geq \mu
  (r_{1},r_{2})>0, \end{equation} for any  $x\in X$, $r_{1}\leq r_{2}$, where $\mu
  (r_{1},r_{2})$ is a function of $r_{1}$ and $ r_{2}$.  For any
  compact subset $K\subset X$, $$\nu(K)=\lim_{\delta\rightarrow 0}
  \nu_{\delta}(K)=\lim_{\delta\rightarrow 0} \inf\left\{\sum_{i}\underline{V}_{0}(x_{i},r_{i})|
 r_{i}<\delta\right\},$$ where
$\bigcup_{i}B_{d_{X}}(x_{i},r_{i})\supset
  K$.   By scaling $\tilde{\omega}_{t}$ and
$\omega$ by one positive number, we assume that
$B_{\omega}(\phi^{-1}(\bar{x}),2)\subset N_{0}$ and is a  geodesically
convex set. 

  \begin{lemma}\label{l4.3} There is a constant  $\upsilon>0$ such that
$$\nu(X)=\upsilon \int_{M}\omega_{M}^{n}, \ \ \ \ \underline{V}_{0}(x,r)=\upsilon
\int_{f^{-1}(B_{\omega}(\phi^{-1}(x),r))}\omega_{M}^{n},$$ whenever
$x\in X_{0}$ and $r\leq 1$ is such that
$B_{\omega}(\phi^{-1}(x),2r)$ is a geodesically convex subset of
$(N_{0}, \omega) $.
\end{lemma}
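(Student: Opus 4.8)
The plan is to reduce both quantities to the single exact identity provided by the Monge--Amp\`ere equation \eqref{ma}: for every measurable $A\subseteq M$,
\[
\Vol_{\ti{\omega}_{t}}(A)=\frac{c_{t}\,t^{n-m}}{n!}\int_{A}\omega_{M}^{n}.
\]
The crucial feature is that the prefactor $c_{t}t^{n-m}/n!$ is independent of $A$, so it will cancel in the renormalised ratios $\underline{V}_{k}$. Since \eqref{e4.1} shows that $\underline{V}_{0}(x,r)$ is independent of the chosen sequence converging to $x$, I am free, in computing the numerator of $\underline{V}_{k}$, to take $p_{k}=s_{i}(y)$ with $y=\phi^{-1}(x)$, so that $f(p_{k})=y$ exactly and $p_{k}\to x$ by the construction in Lemma \ref{p00.2}.

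The geometric core is the following squeezing estimate. Let $x'\in X_{0}$, $y'=\phi^{-1}(x')$, let $r\leq 1$ be such that $B_{\omega}(y',2r)$ is geodesically convex in $N_{0}$, and let $q_{k}\to x'$ be any sequence with $f(q_{k})\to y'$. Then there is $\epsilon_{k}\to 0$ with
\[
f^{-1}\big(B_{\omega}(y',r-\epsilon_{k})\big)\subseteq B_{\ti{\omega}_{t_{k}}}(q_{k},r)\subseteq f^{-1}\big(B_{\omega}(y',r+\epsilon_{k})\big).
\]
For the right inclusion, a minimal geodesic $\gamma$ from $q_{k}$ to $q\in B_{\ti{\omega}_{t_{k}}}(q_{k},r)$ has $\ti{\omega}_{t_{k}}$-length $<r$; by \eqref{conseq2}, $\ti{\omega}_{t_{k}}\geq e^{-\ve(t_{k})}f^{*}\omega$ on the relevant compact region, so $f(\gamma)$ has $\omega$-length $<e^{\ve(t_{k})/2}r$ and hence $d_{\omega}(f(q_{k}),f(q))<r+\epsilon_{k}$; the possibility that $\gamma$ projects outside $B_{\omega}(y',2r)$ is excluded by convexity exactly as in the distance estimates of Lemma \ref{p00.2}. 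For the left inclusion I use the fibre collapse: given $q$ with $f(q)\in B_{\omega}(y',r-\epsilon_{k})$, lift the minimal base geodesic from $f(q_{k})$ to $f(q)$ via a local section to a curve of $\ti{\omega}_{t_{k}}$-length at most $(1+o(1))\,d_{\omega}(f(q_{k}),f(q))$ by \eqref{conseq}, then join its endpoints to $q_{k}$ and $q$ within the fibres, whose $\ti{\omega}_{t_{k}}$-diameters tend to $0$ uniformly on compact sets by \eqref{rescaled}; the resulting curve has length $<r$ for $k$ large.

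Combining the squeezing with the exact identity, and using that $r\mapsto\int_{f^{-1}(B_{\omega}(y',r))}\omega_{M}^{n}$ is continuous (because $f^{-1}(\partial B_{\omega}(y',r))$ has measure zero and the fibre integral $f_{*}(\omega_{M}^{n})$ is a smooth measure on $N_{0}$), I obtain
\[
\Vol_{\ti{\omega}_{t_{k}}}\!\big(B_{\ti{\omega}_{t_{k}}}(q_{k},r)\big)=\frac{c_{t_{k}}t_{k}^{n-m}}{n!}\left(\int_{f^{-1}(B_{\omega}(y',r))}\omega_{M}^{n}+o(1)\right).
\]
Applying this to the numerator with $q_{k}=p_{k}$, $y'=y$, radius $r$, and to the denominator with $q_{k}=\bar p_{k}$, $y'=\bar y:=\phi^{-1}(\bar x)$, radius $1$ (here $f(\bar p_{k})\to\bar y$ follows from $\bar p_{k}\to\bar x$ by the same minimal-geodesic comparison), the prefactor cancels and \eqref{e4.1} gives
\[
\underline{V}_{0}(x,r)=\upsilon\int_{f^{-1}(B_{\omega}(y,r))}\omega_{M}^{n},\qquad \upsilon:=\left(\int_{f^{-1}(B_{\omega}(\bar y,1))}\omega_{M}^{n}\right)^{-1}>0,
\]
which is the second formula. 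For the total mass, take $r=2D$ with $D$ the uniform diameter bound; then $B_{\ti{\omega}_{t_{k}}}(p_{k},2D)=M$ and $B_{d_{X}}(x,2D)=X$, so by \eqref{e4.2} and the exact identity with $A=M$ we get $\nu(X)=\underline{V}_{0}(x,2D)=\upsilon\int_{M}\omega_{M}^{n}$.

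I expect the main obstacle to be the squeezing estimate, specifically the control of minimal $\ti{\omega}_{t_{k}}$-geodesics that could a priori leave the compact convex region $B_{\omega}(y',2r)$ or approach $S$, where the two-sided comparison \eqref{conseq}--\eqref{conseq2} between $\ti{\omega}_{t_{k}}$ and $f^{*}\omega$ is no longer available. As in Lemma \ref{p00.2}, this is resolved by noting that a base curve leaving the convex ball has $\omega$-length at least $2r$, contradicting the length bounds in the ranges of $r$ under consideration; the fibre-collapse input \eqref{rescaled} is what makes the left inclusion, and hence the lower volume bound, possible.
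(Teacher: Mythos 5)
Your proposal is correct and takes essentially the same route as the paper's proof: squeeze the geodesic balls $B_{\ti{\omega}_{t_k}}(q_k,r)$ between preimages $f^{-1}\bigl(B_\omega(y',r\mp\epsilon_k)\bigr)$ using \eqref{conseq}--\eqref{conseq2} together with the convexity dichotomy for projected geodesics, then let the Monge--Amp\`ere identity \eqref{ma} cancel the factor $c_{t_k}t_k^{n-m}$ in the normalized volume ratios and conclude via \eqref{e4.1}--\eqref{e4.2}. The only cosmetic differences are that you exploit \eqref{e4.1} to choose the convenient sequence $p_k=s(y)$ with $f(p_k)=\phi^{-1}(x)$ exactly (the paper instead extracts an $\omega_M$-limit point $p'\in f^{-1}(\phi^{-1}(x))$ of a given sequence and transfers distances through it), and that your lower inclusion controls the fiber segments by the collapse \eqref{rescaled}, where the paper bounds a fixed lifted curve using \eqref{conseq} alone.
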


\begin{proof}    If $p_{k}\rightarrow x$ under
the convergence of $(M, \tilde{\omega}_{t_{k}})$ to $ (X, d_{X})
  $, we claim that a subsequence of $p_{k}$ converges to a point
  $p'\in f^{-1}(\phi^{-1}(x))$ under the metric $\omega_{M}$ on $M$.
By Lemma  \ref{p00.2},    there is a compact  neighborhood $B\subset
N_{0}$ of $\phi^{-1}(x) $ and a
  section $s:B\rightarrow f^{-1}(B)$  such
  that $s(\phi^{-1}(x))\rightarrow x$ under the Gromov-Hausdorff
  convergence of $(M, \tilde{\omega}_{t_{k}}) $ to $ (X, d_{X}) $.
  Thus $d_{\ti{\omega}_{t_{k}}} (p_{k},s(\phi^{-1}(x)))\rightarrow
   0$ when $t_{k}\rightarrow 0$. By Lemma
   \ref{c2es}, there
   are curves $\gamma_{k}$ connecting $p_{k}$ and $s(\phi^{-1}(x))$ such
   that ${\rm length}_{\ti{\omega}_{t_{k}}}
   (\gamma_{k})=d_{\ti{\omega}_{t_{k}}}(p_{k},s(\phi^{-1}(x)))$,
   and
   $${\rm length}_{\omega_{0}} (f(\gamma_{k})\cap B)={\rm length}_{f^{*}\omega_{0}}
   (\gamma_{k}\cap f^{-1}(B))\leq C^{\frac{1}{2}}
   {\rm length}_{\ti{\omega}_{t_{k}}}
   (\gamma_{k})\rightarrow0.$$ For a $k\gg 1$, if there is a $y_{k}\in f(\gamma_{k})\backslash
   B$, then $$ {\rm length}_{\omega_{0}} (f(\gamma_{k})\cap B)\geq
   d_{\omega_{0}}(y_{k},\phi^{-1}(x))\geq
   \rho,$$ where $\rho>0$ such that $B_{\omega_{0}}(\phi^{-1}(x),
   \rho)\subset B$, which is a contradiction.
  Thus $f(\gamma_{k})\subset B$ for $k\gg 1$, ${\rm
length}_{\omega_{0}} (f(\gamma_{k}))\rightarrow 0$
    and $f(p_{k}) $ converges to $\phi^{-1}(x)$ under the metric $\omega_{0}$.
    By passing to  a subsequence, $p_{k}$ converges to a point
  $p'$ under the metric $\omega_{M}$.
  Since $f^{*}\omega_{0}\leq C'\omega_{M}$ for a constant $C'>0$,
   $d_{\omega_{0}}(f(p_{k}),f(p'))\leq C'^{\frac{1}{2}}
   d_{\omega_{M}}(p_{k},p')\rightarrow0.$  Hence $f(p')=\phi^{-1}(x) $ and $p'\in
   f^{-1}(\phi^{-1}(x))$.

Let $r$ satisfy $r\leq 1$, and $B_{\omega}(\phi^{-1}(x),2r)$
is a geodesically convex subset of $(N_{0}, \omega) $.
  If  $q\in f^{-1}(B_{\omega}(\phi^{-1}(x),2r))$, there is a
  curve
  $\bar{\gamma}$  connecting $p'$ and $q$ such that
  $f(\bar{\gamma})$ is the unique minimal geodesic connecting $\phi^{-1}(x)$ and $f(q)$.  Thanks to \eqref{conseq} we have
 $$f^{*}\omega -\ve(t_k)\omega_M
    \leq  \tilde{\omega}_{t_k} \leq f^{*}\omega +\ve(t_k)\omega_M$$ where
$\ve(t_{k})\rightarrow 0$ when $t_{k}\rightarrow 0$, on
$f^{-1}(B_{\omega}(\phi^{-1}(x),2r))$. We obtain that
\[\begin{split}
d_{\ti{\omega}_{t_{k}}}(p',q)&\leq {\rm
length}_{\ti{\omega}_{t_{k}}}(\bar{\gamma}) \leq {\rm
length}_{\omega}(f(\bar{\gamma}))+C\ve(t_{k})^{\frac{1}{2}}\\
&=d_{\omega}(\phi^{-1}(x),f(q))+C\ve(t_{k})^{\frac{1}{2}}.
\end{split}\]
If $\bar{\gamma}_{k}$ is a minimal geodesic of $\ti{\omega}_{t_{k}}$
connecting $p'$ and $q$, then \eqref{conseq2} gives
$$d_{\ti{\omega}_{t_{k}}}(p',q)={\rm
length}_{\ti{\omega}_{t_{k}}}(\bar{\gamma}_{k})\geq
e^{-\frac{\ve(t_k)}{2}}{\rm
length}_{\omega}(f(\bar{\gamma}_{k})\cap
B_{\omega}(\phi^{-1}(x),2r)).$$
 If $f(\bar{\gamma}_{k})\subset B_{\omega}(\phi^{-1}(x),2r)$, then $${\rm
length}_{\omega}(f(\bar{\gamma}_{k})\cap
B_{\omega}(\phi^{-1}(x),2r))\geq {\rm
length}_{\omega}(f(\bar{\gamma}))= d_{\omega}(\phi^{-1}(x),f(q)),$$
and, otherwise,
 $${\rm
length}_{\omega}(f(\bar{\gamma}_{k})\cap
B_{\omega}(\phi^{-1}(x),2r))\geq 2 r \geq {\rm
length}_{\omega}(f(\bar{\gamma}))= d_{\omega}(\phi^{-1}(x),f(q)),$$
by the same argument as in the proof of Lemma \ref{p00.2}.    Thus
$$e^{-\frac{\ve(t_k)}{2}}d_{\omega}(\phi^{-1}(x),f(q))\leq
d_{\ti{\omega}_{t_{k}}}(p',q) \leq d_{\omega}(\phi^{-1}(x),f(q)) +
C\ve(t_{k})^\frac{1}{2}$$ where $C$ is a constant independent of
$t_{k}$, $p'$ and $q$. Of course if $k$ is large we will have that
$$d_{\omega}(\phi^{-1}(x),f(q))-C\ve(t_k)^{\frac{1}{2}}\leq e^{-\frac{\ve(t_k)}{2}}d_{\omega}(\phi^{-1}(x),f(q)).$$
 Thanks to \eqref{c2}, there is constant
$C>0$ independent of $t_{k}$ such that $\ti{\omega}_{t_{k}}\leq
C\omega_{M}$ on $f^{-1}(B_{\omega}(\phi^{-1}(x),2r))$.  Let
$\gamma'_{k}$ be  minimal geodesics of $\omega_{M} $ connecting
$p_{k}$ and $p'$, which satisfy $\gamma'_{k}\subset
f^{-1}(B_{\omega}(\phi^{-1}(x),2r))$ for $k\gg1$.
 Thus  $$d_{\ti{\omega}_{t_{k}}}(p',p_{k})\leq {\rm
length}_{\ti{\omega}_{t_{k}}}(\gamma'_{k})\leq
C^{\frac{1}{2}}{\rm length}_{\omega_{M}}(\gamma'_{k})=
C^{\frac{1}{2}}d_{\omega_{M}}(p',p_{k})\rightarrow0.$$  The
 triangle  inequality shows that $$
|d_{\ti{\omega}_{t_{k}}}(p_{k},q)-d_{\omega}(\phi^{-1}(x),f(q))
| \leq
C\ve(t_{k})^{\frac{1}{2}}+C^{\frac{1}{2}}d_{\omega_{M}}(p',p_{k}).$$
Hence there is a function  $\rho(t_{k})$ of $t_{k}$ such that
$\rho(t_{k})\rightarrow 0$ when $t_{k}\rightarrow 0$, and
$$f^{-1}(B_{\omega}(\phi^{-1}(x),r-\rho(t_{k})))\subset
B_{\tilde{\omega}_{t_{k}}}(p_{k},r) \subset
f^{-1}(B_{\omega}(\phi^{-1}(x),r+\rho(t_{k}))).$$ We obtain that
$$\lim_{t_{k}\rightarrow 0}\int_{B_{\tilde{\omega}_{t_{k}}}(p_{k},r)}\omega_{M}^{n}
=\int_{f^{-1}(B_{\omega}(\phi^{-1}(x),r))}\omega_{M}^{n}.$$

Note that
$$\tilde{\omega}_{t_{k}}^{n}=c_{t_{k}}t_{k}^{n-m}\omega_{M}^{n}.$$
 Hence
\[\begin{split}
\underline{V}_{k}(p_{k}, r)&=\frac{{\rm Vol}_{\tilde{\omega}_{t_{k}}}
  (B_{\tilde{\omega}_{t_{k}}}(p_{k},r))}{{\rm Vol}_{\tilde{\omega}_{t_{k}}}
(B_{\tilde{\omega}_{t_{k}}}(\bar{p}_{k},1))}\\
&=\frac{\int_{B_{\tilde{\omega}_{t_{k}}}(p_{k},r)}
c_{t_{k}}t_{k}^{n-m}\omega_{M}^{n}}{\int_{B_{\tilde{\omega}_{t_{k}}}(\bar{p}_{k},1)}
c_{t_{k}}t_{k}^{n-m}\omega_{M}^{n}}\to\frac{\int_{f^{-1}(B_{\omega}(\phi^{-1}(x),r))}
 \omega_{M}^{n}}{\int_{f^{-1}(B_{\omega}(\phi^{-1}(\bar{x}),1))}
  \omega_{M}^{n}},
\end{split}\]
 when $t_{k}\rightarrow 0$. By (\ref{e4.1}),
  $$\underline{V}_{0}(x,r)=\upsilon \int_{f^{-1}(B_{\omega}(\phi^{-1}(x),r))}
 \omega_{M}^{n}, \ \ {\rm  where } \ \ \upsilon=\left(\int_{f^{-1}(B_{\omega}(\phi^{-1}(\bar{x}),1))}
  \omega_{M}^{n}\right)^{-1}.$$  Recall the diameter bound from \cite{To,ZT} $$ {\rm
  diam}_{\tilde{\omega}_{t_{k}}}(M) \leq D$$ for a constant $D>0$.
Using  (\ref{e4.1}), we have
$$\nu(X)=\underline{V}_{0}(x,D)=\lim_{t_{k}\rightarrow 0}\underline{V}_{k}(p_{k}, D)
  =\upsilon \int_{M}
 \omega_{M}^{n}.$$
\end{proof}

\begin{proof}[Proof of Theorem \ref{main2}]We prove that
$X_{0}\subset X$ is  dense. If this is not true,  there is a metric
ball $B_{d_{X}}(x',\rho)\subset X\backslash X_{0}$. Note that $$
{\rm
  diam}_{d_{X}}(X) =\lim_{t_{k}\rightarrow 0}{\rm
  diam}_{\tilde{\omega}_{t_{k}}}(M)\leq D.$$ Because of
  (\ref{e4.2}), we have  $$\nu(B_{d_{X}}(x',\rho))\geq \mu
  (\rho,D)\nu(X)=\varpi >0.  $$ For any compact subset $K\subset
  X_{0}$, $$\nu(K)\leq \nu(X)-\varpi=\upsilon \int_{M}\omega_{M}^{n}-\varpi
  $$ by Lemma \ref{l4.3}.   If $B_{d_{X}}(x_{i},r_{i})$ is a family
  of metric balls in $(X,d_{X})$ such that  $r_{i}<\delta\ll 1$,
  $B_{d_{X}}(x_{i},2r_{i})$ is a geodesically  convex subset of $
  X_{0}$, and $\bigcup_{i}B_{d_{X}}(x_{i},r_{i})\supset
  K$,  then
  $$\sum_{i}\underline{V}_{0}(x_{i},r_{i})=\sum_{i}\upsilon
  \int_{f^{-1}(\phi^{-1}(B_{d_{X}}(x_{i},r_{i})))}\omega_{M}^{n}\geq \upsilon
  \int_{f^{-1}(\phi^{-1}(K))}\omega_{M}^{n}$$ by Lemma \ref{l4.3}.    Thus
  $$\upsilon \int_{f^{-1}(\phi^{-1}(K))}\omega_{M}^{n}\leq \lim_{\delta\rightarrow 0}
  \nu_{\delta}(K)=\lim_{\delta\rightarrow 0} \inf\left\{\sum_{i}\underline{V}_{0}(x_{i},r_{i})|
 r_{i}<\delta\right\}= \nu(K).$$ By taking $K$ large enough such that
  $$\nu(K)\geq \upsilon \int_{f^{-1}(N_{0})}\omega_{M}^{n}-\frac{\varpi}{2}=
  \upsilon \int_{M}\omega_{M}^{n}-\frac{\varpi}{2},$$
we obtain a contradiction.
\end{proof}

\begin{remark}In fact, the same proof shows that $\nu(X\backslash X_0)=0$.
\end{remark}

\end{document}